\documentclass[12pt]{amsart}

\usepackage[english]{babel} 
\usepackage{microtype}
\usepackage{fullpage}       
\usepackage{mathtools} 
\usepackage{amsthm}    
\usepackage{amsfonts}  
\usepackage{amssymb}
\usepackage{stmaryrd}

\usepackage[dvipsnames]{xcolor}
\usepackage{graphicx}
\usepackage{tikz-cd}
\usepackage{todonotes}

\usepackage[pagebackref]{hyperref}
\hypersetup{
  colorlinks=true,
  linkcolor=NavyBlue,
  citecolor=Maroon,
  urlcolor=NavyBlue,
  pdftitle={A topos for étale-finite Heyting algebras},
  pdfauthor={Marco Abbadini, Rodrigo Nicolau Almeida, and Igor Arrieta},
  pdfkeywords={Toposes, intuitionistic logic, Heyting algebras, Esakia duality}
}
\usepackage[capitalise]{cleveref} 

\theoremstyle{plain}
\newtheorem{theorem}{Theorem}[section]
\newtheorem{lemma}[theorem]{Lemma}
\newtheorem{proposition}[theorem]{Proposition}
\newtheorem{corollary}[theorem]{Corollary}
\newtheorem{claim}[theorem]{Claim}

\theoremstyle{definition}
\newtheorem{definition}[theorem]{Definition}
\newtheorem{notation}[theorem]{Notation}
\newtheorem{example}[theorem]{Example}
\newtheorem{question}[theorem]{Question}
\newtheorem{remark}[theorem]{Remark}

\makeatletter
\def\l@subsection{\@tocline{2}{0pt}{2.5pc}{5pc}{}}
\def\l@subsubsection{\@tocline{3}{0pt}{5pc}{7.5pc}{}}
\makeatother

\newcommand{\cat}[1]{\mathbf{#1}}
\newcommand{\N}{\mathbb{N}}
\newcommand{\op}{\mathrm{op}}
\newcommand{\id}{\mathrm{id}}
\newcommand{\E}{\mathcal{E}}
\newcommand{\V}{\mathcal{V}}
\newcommand{\Vu}{\V^{\uparrow}}
\newcommand{\tru}[1]{\llbracket #1 \rrbracket}

\newcommand{\Set}{\cat{Set}}
\newcommand{\Pos}{\cat{Pos}}
\newcommand{\DLat}{\cat{DLat}}
\newcommand{\Top}{\cat{Top}}
\newcommand{\TopLH}{\Top_{\mathrm{LH}}}
\newcommand{\HA}{\cat{HA}}
\newcommand{\Esa}{\cat{Esa}}
\newcommand{\Pries}{\cat{Pries}}
\renewcommand{\L}{\cat{L}}

\newcommand{\KM}{\mathsf{KM}}
\newcommand{\SLH}{\mathrm{SLH}}
\newcommand{\Viet}{\mathrm{Viet}}
\newcommand{\Spec}{\mathsf{Spec}}
\newcommand{\Sub}{\mathrm{Sub}}
\newcommand{\ClopUp}{\mathsf{ClopUp}}
\newcommand{\Up}{\mathrm{Up}}
\newcommand{\Typ}{\mathsf{Typ}}
\newcommand{\Types}{\mathsf{Types}}
\newcommand{\EsaEt}{\cat{Esa\acute{E}t}}

\newcommand{\cofnaturals}{\mathcal{P}_{\varnothing,\mathrm{cof}}(\N)}
\newcommand{\fincofnaturals}{\mathcal{P}_{\mathrm{fin,cof}}(\N)}

\newcommand{\Q}{\mathbb{Q}}

\newcommand{\longhookrightarrow}{\lhook\joinrel\longrightarrow}
\newcommand{\climp}{\Rightarrow}
\renewcommand{\u}{{\uparrow}}
\renewcommand{\d}{{\downarrow}}

\newcommand{\expl}[2]{%
  \underset{%
    \substack{%
      \big\uparrow\\[-0.2ex]
      \mathrlap{%
        \mbox{%
          \footnotesize
          \hspace{-1em}%
          \begin{tabular}{@{}l@{}}
            #2
          \end{tabular}%
        }%
      }%
    }%
  }{#1}%
}

\makeatletter
\let\@setaddresses\relax
\makeatother

\newcommand{\authorwithphoto}[5]{%
  \par\bigskip
  \noindent
  \begin{minipage}[c]{0.16\textwidth}
    \includegraphics[width=2cm,height=2.5cm,keepaspectratio]{#1}
  \end{minipage}%
  \hfill%
  \begin{minipage}[c]{0.80\textwidth}
    \textbf{#2}\\
    #3\\
    \textit{Email address:} \href{mailto:#4}{#4}\\
    \textit{Webpage:} \url{#5}
  \end{minipage}%
}

\title{A topos for étale-finite Heyting algebras}

\author{Marco Abbadini}
\address[Marco Abbadini]{Research Institute in Mathematics and Physics, Université catholique de Louvain, Louvain-la-Neuve, 1348, Belgium}
\email{marco.abbadini@uclouvain.be}

\author{Rodrigo Nicolau Almeida}
\address[Rodrigo Nicolau Almeida]{Institute for Logic, Language and Computation, University of Amsterdam, Amsterdam, 1098XH, The Netherlands}
\email{r.dacruzsilvapinadealmeida@uva.nl}

\author{Igor Arrieta}
\address[Igor Arrieta]{Department of Mathematics, University of the Basque Country UPV/EHU, Bilbao, 48080, Spain}
\email{igor.arrieta@ehu.eus}

\keywords{Toposes, intuitionistic logic, Heyting algebras, Esakia duality}
\subjclass[2020]{Primary: 03G30. Secondary: 06D20, 06D50, 03G10, 18B25, 18F70}

\begin{document}

\begin{abstract}
    A longstanding open problem is whether every Heyting algebra is the lattice of truth values (i.e., of subterminal objects) of some elementary topos. A positive answer is known for complete Heyting algebras (i.e., locales) via sheaves, and for Boolean algebras via a construction due to Peter Freyd.

    We extend Freyd's construction to all \emph{étale-finite} Heyting algebras, in the sense of Evgeny Kuznetsov. These are the Heyting algebras satisfying a generalisation of the law of excluded middle relative to some finite Heyting subalgebra.
    For every étale-finite Heyting algebra $H$, we use Esakia duality to construct an elementary topos whose lattice of truth values is isomorphic to $H$, thereby extending the class of Heyting algebras for which a positive answer to the Heyting-to-topos problem is known.
    
    The toposes we construct are categories of certain compact étale spaces.
    As a consequence, they are \emph{finitely propositional}: every object has a finite cover by subterminal objects.
    We show that a Heyting algebra occurs as the lattice of truth values of some finitely propositional topos if and only if it is étale-finite. 
    This exhibits an obstruction to extending the use of compact étale spaces beyond the étale-finite case.
\end{abstract}
\maketitle
\tableofcontents

\section{Introduction}

In \cite{Pitts1992}, Andrew Pitts proved the uniform interpolation property for propositional intuitionistic logic. In the introduction to that paper, the author explained how he had arrived at this result after having tried to disprove it (see \cite[Prop.~17]{vangooluniforminterpolation} for details) in an attempt to provide a negative answer to the following question:

\begin{question}[Heyting-to-topos problem]\label{q: pitts question}
    Is every Heyting algebra isomorphic to the lattice $\Sub_{\E}(1)$ of subobjects of the terminal object $1$ of some elementary topos $\E$?
\end{question}

The lattice $\Sub_{\E}(1)$ is also known as the \emph{lattice of truth values of $\E$}. 
As we know from a private communication with Pitts, he learned of the problem from Peter Freyd when Freyd was visiting Cambridge in 1980 and Pitts was a PhD student; we do not know whether it was Freyd who originally posed the problem.

The Heyting-to-topos problem has the following logical interpretation: does every intuitionistic propositional theory $T$ have a higher-order extension $T'$ such that 
\begin{enumerate}
    \item $T'$ is conservative over $T$, i.e., $T'$ does not prove any new propositional sentences, and
    \item for every \emph{higher-order} sentence $\varphi$, there is a \emph{propositional} sentence $\varphi'$ such that $T'$ proves that $\varphi$ and $\varphi'$ are equivalent?
\end{enumerate}

The proof of uniform interpolation ultimately left \cref{q: pitts question} open.
Two classes of Heyting algebras have been shown to be \emph{topos-admissible}, i.e., isomorphic to $\Sub_{\E}(1)$ for some elementary topos $\E$:
\begin{itemize}

    \item \emph{Complete Heyting algebras} (i.e., locales), where one can take the topos of sheaves on the given locale \cite[Ch.~IX.5]{MacLane1994};
    
    \item \emph{Boolean algebras}, for which a construction of Peter Freyd is known to produce a topos with the desired properties (see \cite[Exercise~9.11]{johnstonetopos}, and also \cite[Example~4.8]{Pitts2002} for a similar construction).
    
\end{itemize}
There is also an isolated example of a Heyting algebra known to be topos-admissible: $\mathbb{Q} \cap [0,1]$ (see the proof of \cite[Prop.~2.71]{Freyd1972}).\footnote{In fact, every $\omega$-categorical countable Heyting algebra that embeds elementarily into a topos-admissible Heyting algebra is topos-admissible; for the case of $\mathbb{Q} \cap [0,1]$, one uses that it embeds elementarily into $[0,1]$.}

These known cases can be combined to produce other topos-admissible Heyting algebras using the fact that the class of topos-admissible Heyting algebras is closed under arbitrary products (as one can take the Cartesian product of toposes) and homomorphic images (as one can perform the filter-quotient construction, see, e.g., \cite[Thm.~2, p.~261]{MacLane1994}).
To the best of our knowledge, these are the only Heyting algebras that, prior to the present work, have been known to be topos-admissible.

The Heyting-to-topos problem has appeared several times in the literature during the 1990s and early 2000s (see e.g.\ \cite[Introduction]{Ghilardi2002} and \cite{Pitts2002}). 
In the early 2000s, Dimitri (Dito) Pataraia began work on this problem. Part of his analysis is outlined in a lecture given by Peter Johnstone \cite{johnstonehochasminimaltoposes} on the subject of ``hochas'' (higher-order cylindric Heyting algebras), and involved adapting methods related to the step-by-step construction of finitely generated free Heyting algebras \cite{ghilardifreeheyting} to adding higher-order quantifiers. 
Although in the community there were claims of a solution, Pataraia passed away without leaving a written proof.

Over the years, the problem has been asked in different contexts (see, e.g., \cite{Mathoverflow,stackexchange}). 
In recent years, there has been renewed interest in the problem: in \cite{kuznetsovjibladze}, a state-of-the-art account of the problem was given, emphasising the Boolean case and outlining some of the difficulties in extending the technique beyond this case. In \cite{pittshigherorder}, the problem was once again recalled, leading to renewed attention from the research community (see, e.g., \cite[Question~16]{vangooluniforminterpolation}).

Progress on the problem faces two alternatives \cite{pittshigherorder} (see also \cite[Sec.~3.3]{vangooluniforminterpolation}).
On the one hand, if the answer is \emph{negative}, some free Heyting algebra will give a counterexample\footnote{This is a consequence of the filter-quotient construction.} (most likely the one on a countably infinite set of generators). On the other hand, if the answer is \emph{positive}, then, for every \emph{non-complete} Heyting algebra, one has to construct an elementary topos that is not a Grothendieck topos. Pataraia's approach via hochas---similar to the tripos-to-topos construction \cite{Hyland1980,Pitts2002}---amounts to an algebraic encoding of the structure of a minimal topos \cite{johnstonehochasminimaltoposes}. By contrast, Freyd's approach is strongly geometric and heavily reminiscent of the methods of Grothendieck topos theory. Given a Boolean algebra $B$, Freyd's elementary topos is constructed as follows: letting $X$ denote the Stone space dual to $B$, one takes the topos of local homeomorphisms $Y \to X$ with $Y$ a Stone space.

In this paper, we identify a class of Heyting algebras for which a generalisation of Freyd's approach works. We begin, in \cref{sec: preliminaries}, by recalling the definition of an elementary topos and by reviewing the basics of Esakia duality between Heyting algebras and Esakia spaces. 
\Cref{sec: etale-finite Heyting algebras} recalls the class of \emph{étale-finite Heyting algebras}, introduced by Kuznetsov in \cite{kuznetsovetale}, which includes many Heyting algebras that are neither Boolean nor complete.
Our main result is in \cref{sec:topos}: we show that every étale-finite Heyting algebra is topos-admissible. 
In \cref{sec:finitely-propositional-toposes}, we show that étale-finite Heyting algebras are the largest class one can cover with the technology of this paper. To show this, we introduce \emph{finitely propositional toposes}: these are the toposes in which every object has a finite jointly epimorphic family of morphisms from subterminal objects.
The toposes used for the main result in \cref{sec:topos} are finitely propositional, and we show that any finitely propositional topos has an étale-finite Heyting algebra as its lattice of truth values. We conclude by indicating some future directions in this area of research.

\section{Preliminaries} \label{sec: preliminaries}

\subsection{Toposes}

An \emph{elementary topos} is commonly defined as a category that has finite limits, is Cartesian closed, and has a subobject classifier.
In this paper, we will mainly use an alternative, equivalent definition (\cite[Cor.~A.2.3.4]{johnstone2002sketches}): an \emph{elementary topos} is a category that has all finite limits and where every object has a power object, in the following sense.

\begin{definition}[Power object]
    Let $\cat{C}$ be a category with finite limits, and $A\in \cat{C}$. A \emph{power object of $A$} is a triple $(P(A),{\in_{A}},i)$ where $P(A)$ and $\in_{A}$ are objects of $\cat{C}$ and $i \colon {\in_{A}}\rightarrowtail A\times P(A)$ is a monomorphism such that, for all objects $B$ and $R$ of $\cat{C}$ and every monomorphism $f \colon R\rightarrowtail A\times B$, there is a unique morphism $\tilde{f} \colon B\to P(A)$ such that there is a (necessarily unique) morphism $R \to {\in_A}$ making the following square a pullback. 
    \[
        \begin{tikzcd}
            R \arrow[d, tail, "f"'] \arrow[r]                    & \in_{A} \arrow[d, tail, "i"] \\
            A\times B \arrow[r, "\id_{A}\times \tilde{f}"'] & A\times P(A)
        \end{tikzcd}
    \]
\end{definition}

\subsection{Esakia duality}

As is customary, for a poset $X$ and a subset $S\subseteq X$, we write 
\[
    {\u}S=\{x\in X : \exists s\in S \text{ s.t.\ } s\leq x\} \quad \text{and} \quad {\d}S=\{x\in X : \exists s\in S \text{ s.t.\ } x\leq s\}.
\]
We say that $S$ is an \emph{upset} if $S={\u}S$ and a \emph{downset} if $S={\d}S$. 
If $S = \{ x \}$ is a singleton, we simply write ${\u}x$ and ${\d}x$.
By an \emph{ordered topological space}, or simply an \emph{ordered space}, we mean a poset that is also a topological space. 
Given an ordered space $X$, we denote by $\ClopUp(X)$ the set of its clopen upsets.

Hilary Priestley generalised Stone duality from Boolean algebras to bounded distributive lattices by introducing what are now called \emph{Priestley spaces} \cite{Priestley1970}.

\begin{definition}[Priestley space, see e.g.\ {\cite[p.~258]{Davey2002-lr}}]
    A \emph{Priestley space} is an ordered space $(X,\leq)$ that is compact and satisfies the \emph{Priestley separation axiom}: for all $x,y \in X$ with $x\nleq y$, there is $U\in \ClopUp(X)$ such that $x\in U$ and $y\notin U$. 
\end{definition}

The celebrated \emph{Priestley duality} \cite{Priestley1970} establishes a categorical duality between
\begin{itemize}
    \item the category $\Pries$ of Priestley spaces and continuous order-preserving maps between them, and
    \item the category $\DLat$ of bounded distributive lattices with bounded lattice homomorphisms between them.
\end{itemize}
In this duality, 
the functor
\[
\ClopUp \colon \Pries^\op \to \DLat
\]
associates to each Priestley space $X$ the bounded distributive lattice 
\[
\ClopUp(X)
\]
of its clopen upsets, ordered by inclusion.
On morphisms, it maps a morphism $f \colon X \to Y$ to the preimage function $f^{-1}[-]\colon \ClopUp(Y) \to \ClopUp(X)$.
The quasi-inverse of this functor is denoted by
\[
\Spec \colon \DLat \longrightarrow \Pries^{\op},
\]
and maps a bounded distributive lattice to the set of prime filters, equipped with an appropriate topology and order.

Esakia obtained a duality for Heyting algebras \cite{esakiatopologicalkripkemodels} by introducing what we nowadays call \emph{Esakia spaces}.

\begin{definition}[{Esakia space, see e.g.\ \cite[Def.~3.1.1]{Esakiach2019HeyAlg}}]
    An \emph{Esakia space} is a Priestley space $(X,\leq)$ such that, for every clopen subset $W$ of $X$, ${\d}W$ is clopen.
\end{definition}

\emph{Esakia duality} can be seen as a restriction of Priestley duality to the (non-full) subcategory $\HA$ of Heyting algebras and Heyting homomorphisms. 
For an Esakia space $X$, $\ClopUp(X)$ is a Heyting algebra, with the implication given by $A \to B = X\smallsetminus{\d}(A\smallsetminus B)$. 
Note that each finite poset, with the discrete topology, is an Esakia space.

Since Heyting homomorphisms preserve the implication, their dual maps between Esakia spaces satisfy a property that is stronger than order-preservation:

\begin{definition}[p-morphism]
    A \emph{p-morphism} from a poset $P$ to a poset $Q$ is an order-preserving map $f\colon P\to Q$ such that, for all $x\in P$ and $y\in Q$ with $f(x)\leq y$, there is $x'\geq x$ such that $f(x')=y$.
    \[
        \begin{tikzpicture}[>=stealth, scale=1.1]
            \node[circle, draw, fill=blue!15, inner sep=2pt, label=left:$x$] (x) at (0,0) {};
            \node[circle, draw, fill=blue!15, inner sep=2pt, label=left:$\exists x'$] (xp) at (0,1.5) {};
            \draw[dashed] (x) -- (xp);
        
            \node[circle, draw, fill=red!15, inner sep=2pt, label=right:$f(x)$] (fx) at (3,0) {};
            \node[circle, draw, fill=red!15, inner sep=2pt, label=right:$y$] (y) at (3,1.5) {};
            \draw (fx) -- (y);
        
            \draw[->] (x) -- (fx);
            \draw[->, dashed] (xp) -- (y);
        \end{tikzpicture}
    \]
\end{definition}

\begin{remark}\label{rem: remark on p-morphisms}
    It is well known that an order-preserving map is a p-morphism if and only if the image of any upset is an upset (see e.g.\ \cite[Prop.~1.4.12]{Esakiach2019HeyAlg}).
\end{remark}

Given a continuous p-morphism $f\colon X\to Y$ between Esakia spaces, the preimage function $f^{-1}[-] \colon \ClopUp(Y)\to \ClopUp(X)$ is a Heyting homomorphism. 
This provides a duality between the category $\Esa$ of Esakia spaces with continuous p-morphisms and $\HA$.

Throughout the paper, we will make use of some constructions on ordered spaces, which we summarise:

\begin{proposition}[Basic closure properties of Priestley and Esakia spaces] \label{properties.pri}\hfill
    \begin{enumerate}
        \item\label{properties.pri1}
        Let $X$ and $Y$ be Priestley spaces. The product $X\times Y$ with the product topology and order is a Priestley space. Moreover, if $X$ and $Y$ are Esakia spaces, then $X\times Y$ is an Esakia space, as well.\footnote{We warn the reader that, in general, this is \emph{not} the categorical product in the category $\Esa$ of Esakia spaces and continuous p-morphisms: while the cone $X \leftarrow X \times Y \rightarrow Y$ is a cone in the category of Esakia spaces, the unique comparison function from another cone is not, in general, a p-morphism.} 
        
        \item\label{properties.pri2} A closed subspace of a Priestley space is a Priestley space, and a closed upset of an Esakia space is an Esakia space.

        \item \label{properties.pri3} If $f\colon Y\to X$ is a surjective continuous p-morphism where $Y$ is an Esakia space and $X$ a Priestley space, then $X$ is likewise an Esakia space.
    \end{enumerate}
\end{proposition}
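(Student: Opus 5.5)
We prove the three items separately, each directly from the definitions, using compactness throughout.

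For \eqref{properties.pri1}, compactness of $X\times Y$ is Tychonoff. For Priestley separation, suppose $(x,y)\nleq(x',y')$. Then either $x\nleq x'$ or $y\nleq y'$. In the first case, pick $U\in\ClopUp(X)$ with $x\in U$ and $x'\notin U$; then $U\times Y$ is a clopen upset containing $(x,y)$ and not $(x',y')$. The second case is symmetric.

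For the Esakia condition, let $W\subseteq X\times Y$ be clopen. By compactness, $W$ is a finite union of clopen rectangles $U_i\times V_i$, since clopens of $X$ and $Y$ form bases in Stone spaces. Downsets commute with unions, and ${\d}(U\times V)={\d}U\times{\d}V$ for the product order. So ${\d}W=\bigcup_i {\d}U_i\times{\d}V_i$, which is clopen because $X$ and $Y$ are Esakia.

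For \eqref{properties.pri2}, a closed subspace $C$ of a compact space is compact. If $x\nleq y$ in $C$, a separating $U\in\ClopUp(X)$ restricts to $U\cap C\in\ClopUp(C)$. Now let $C$ be a closed upset of an Esakia space $X$, and let $W$ be clopen in $C$. Since $W$ is a clopen subset of the compact space $C$, it is compact, hence closed in $X$. The key step is to write $W=W'\cap C$ with $W'$ clopen in $X$: $W$ is open in $C$, so $W=O\cap C$ for some open $O$, and by compactness $W$ is covered by finitely many clopens of $X$ contained in $O$. Then ${\d}_C W=C\cap{\d}_X W$ because $C$ is an upset, but we must control ${\d}_X W$ rather than ${\d}_X W'$. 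Note that ${\d}_X W=\bigcap\{{\d}_X K : K \text{ clopen},\ W\subseteq K\}$. Each ${\d}_X K$ is clopen by the Esakia property, and a standard compactness argument in Priestley spaces gives the equality, since the down-closure of a closed set is closed and separation works. So ${\d}_X W$ is closed, and ${\d}_C W$ is closed in $C$.

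For openness, note that $C\smallsetminus W$ is also clopen in $C$ and hence compact. We argue instead via the equivalent characterisation: $X$ is Esakia iff ${\d}U$ is open for every open $U$. Then $W=O\cap C$ with $O$ open in $X$, and ${\d}_C W=C\cap{\d}_X(O\cap C)$. The main obstacle is showing that ${\d}_X(O\cap C)\cap C$ is open in $C$. Take $z\in C$ with $z\le w\in O\cap C$. Since $C$ is an upset, every point above $z$ lies in $C$. So ${\d}_X O\cap C$ and ${\d}_X(O\cap C)\cap C$ coincide, because any $w\ge z$ automatically lies in $C$. Hence ${\d}_C W={\d}_X O\cap C$ is open. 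I expect this upset trick to be the crux, with the closedness handled as above.

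For \eqref{properties.pri3}, let $W\subseteq X$ be clopen. Then $f^{-1}[W]$ is clopen, so ${\d}f^{-1}[W]$ is clopen in $Y$. Using surjectivity and the p-morphism property, we claim $f[{\d}f^{-1}[W]]={\d}W$ and $f^{-1}[{\d}W]={\d}f^{-1}[W]$. For the second: if $f(y)\le w\in W$, the p-morphism property gives $y'\ge y$ with $f(y')=w$. Then ${\d}W$ is the image of a compact set, hence closed. Its complement $X\smallsetminus{\d}W=f[Y\smallsetminus{\d}f^{-1}[W]]$, using surjectivity and the preimage identity, is also the image of a compact set, hence closed. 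So ${\d}W$ is clopen.
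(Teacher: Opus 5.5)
Your proposal is correct, but it takes a different route from the paper. The paper proves nothing directly: it cites Davey and Priestley (Ch.~11) for the Priestley facts and Esakia's book (Sec.~A.8.1 and Thm.~3.1.2) for products and closed upsets, and it calls closure under p-morphic images standard. You instead give self-contained arguments, which gains independence from those references at the cost of length.

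Items (1) and (3) are clean. In (3), the identities $f^{-1}[{\downarrow}W]={\downarrow}f^{-1}[W]$ and, by surjectivity, $X\smallsetminus{\downarrow}W=f[Y\smallsetminus{\downarrow}f^{-1}[W]]$ do all the work.

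Item (2) contains an unnecessary detour. Your remark that ``we must control ${\downarrow}_X W$ rather than ${\downarrow}_X W'$'' is mistaken, because the upset trick you use at the end for $O$ applies equally to the clopen $W'$. Suppose $c\in C$ and $c\le w'\in W'$. Since $C$ is an upset, $w'\in C$, so $w'\in W'\cap C=W$. Hence ${\downarrow}_C W=C\cap{\downarrow}_X W'$, which is clopen in $C$ because ${\downarrow}_X W'$ is clopen in $X$ by the Esakia property.

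This one line makes two of your steps superfluous:
\begin{enumerate}
\item the closedness argument via $\bigcap\{{\downarrow}_X K : K \text{ clopen},\ W\subseteq K\}$, whose justification you only sketch;
\item the appeal to the characterisation ``${\downarrow}U$ is open for every open $U$'', which you state without proof.
\end{enumerate}
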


\begin{proof}
    For the assertions concerning Priestley spaces, see
    \cite[Ch.~11]{Davey2002-lr}. For the assertions concerning Esakia spaces, see \cite[Sec.~A.8.1]{Esakiach2019HeyAlg} for products and \cite[Thm.~3.1.2]{Esakiach2019HeyAlg} for closed upsets; the fact that Esakia spaces are closed under p-morphic images is standard (this fact is used throughout \cite[Sec.~3.3]{Esakiach2019HeyAlg}).
\end{proof}

\subsection{Vietoris spaces}

Vietoris spaces have a long history \cite{Michael1951TopologiesOS}; we will need them in the context of Stone spaces: 

\begin{definition}[Vietoris space] \label{d:Vietoris}
    Let $X$ be a Stone space. We write $\V(X)$ for the set of closed subsets of $X$.
    The set $\V(X)$ is topologised by declaring the following sets to form a subbasis:
    \[
        \llbracket W \rrbracket=\{C\in \V(X) : C\subseteq W\} \text{ and } \langle W\rangle=\{C\in \V(X) : C\cap W\neq \varnothing\},
    \]
    where $W$ is a clopen subset of $X$.
    The space $\V(X)$ is called the \emph{Vietoris space} of $X$.
    
    Furthermore, when $X$ is a Priestley space, we write $\Vu(X)$ for the subset of $\V(X)$ consisting of all closed \emph{upsets} of $X$.
\end{definition}

\begin{notation}[$\climp$, $\Box$, $\to$]\label{not:box-heyting-ops}
Let $X$ be a set. For subsets $U,V\subseteq X$, we write
\[
    U\climp V \coloneqq (X\smallsetminus U)\cup V
\]
for the classical implication between $U$ and $V$ in the Boolean algebra $\mathcal P(X)$.

Now suppose that $X$ is a poset. For $A\subseteq X$, set
\[
    \Box A \coloneqq \{x\in X : {\u}x\subseteq A\}.
\]
Thus $\Box A$ is an upset. If $X$ is an Esakia space and $A$ is clopen, then $\Box A$ is clopen as well, since
\[
    X\smallsetminus \Box A = {\d}(X\smallsetminus A).
\]

Finally, if $U,V\subseteq X$ are upsets, we write
\[
    U\to V \coloneqq \{x\in X : {\u}x\cap U \subseteq V\}.
\]
Equivalently,
\[
    U\to V=\Box(U\climp V).
\]
This is the Heyting implication in the lattice of upsets of $X$.  We will use the composite $\Box(U\climp V)$ whenever ambiguity may arise. Note also that, in an Esakia space, if $U$ and $V$ are clopen upsets, then $U\rightarrow V$ is a clopen upset, too.
\end{notation}

\begin{notation} \label{not:forall-along-map}
    Let $f\colon Y\to X$ be a continuous map between compact Hausdorff spaces. 
    For a subset $A\subseteq Y$, we define
    \[
        \forall_f[A]\coloneqq \{x\in X : f^{-1}[\{x\}]\subseteq A\} = X\smallsetminus f[Y\smallsetminus A].
    \]
    If $A$ is clopen and $f$ is a local homeomorphism (and hence open), then $f[Y\smallsetminus A]$ is open and compact, and thus clopen, and so $\forall_f[A]$ is clopen.
\end{notation}

The following lemma summarises some important facts we will need about Vietoris spaces:

\begin{lemma}[Basic properties of Vietoris spaces] \label{lem: basic facts about vietoris}
    \hfill
    \begin{enumerate}
        \item \label{eq: vietoris esakia} 
        If $X$ is a Stone space, then the space $\V(X)$ is a Stone space.
        \item \label{eq: continuous maps lift to Vietoris} If $f \colon X\to Y$ is a continuous map between Stone spaces, then the function
        \begin{align*}
            f[-] \colon \V(X)& \longrightarrow \V(Y)\\
            C & \longmapsto f[C]
        \end{align*}
        is continuous.
        \item \label{eq: restriction maps are continuous} If $W$ is a clopen subset of a Stone space $X$, then the function
        \begin{align*}
            r_{W}\colon \V(X) & \longrightarrow \V(X)\\
            C& \longmapsto C\cap W
        \end{align*}
        is continuous.
        
        \item \label{eq: ordered vietoris esakia} If $X$ is an Esakia space, then $\Vu(X)$ is a closed subset of $\V(X)$.
    \end{enumerate}
\end{lemma}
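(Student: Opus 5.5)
We prove the four items separately, always using the subbasis of $\V(X)$ from \cref{d:Vietoris}. For continuity it suffices to check that preimages of subbasic sets are open.

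For item (1), Hausdorffness is direct. Given distinct closed $C\neq D$, say $x\in C\smallsetminus D$, choose a clopen $W$ with $x\in W$ and $W\cap D=\varnothing$. Then $\langle W\rangle$ and $\llbracket X\smallsetminus W\rrbracket$ are disjoint open sets containing $C$ and $D$ respectively. Zero-dimensionality follows because $\llbracket W\rrbracket$ and $\langle W\rangle$ are complementary: $\V(X)\smallsetminus\llbracket W\rrbracket=\langle X\smallsetminus W\rangle$. Hence every subbasic set is clopen, and so the finite intersections form a clopen basis.

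Compactness is the main obstacle in (1). We use Alexander's subbasis lemma: suppose a cover by subbasic sets $\{\llbracket W_i\rrbracket\}_{i\in I}\cup\{\langle V_j\rangle\}_{j\in J}$ has no finite subcover. Consider $C:=X\smallsetminus\bigcup_j V_j$, which is closed and hence lies in $\V(X)$. Since $C\notin\langle V_j\rangle$ for every $j$, we must have $C\in\llbracket W_i\rrbracket$ for some $i$, i.e.\ $C\subseteq W_i$. Then $X\smallsetminus W_i$ is compact and covered by the $V_j$, so finitely many $V_{j_1},\dots,V_{j_n}$ cover it. Now any closed $D$ either lies in $W_i$ or meets some $V_{j_k}$. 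So $\llbracket W_i\rrbracket\cup\langle V_{j_1}\rangle\cup\dots\cup\langle V_{j_n}\rangle$ is a finite subcover, a contradiction. (The edge case $C=\varnothing$ is covered by the same argument.)

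For item (2), note that $f[C]$ is compact and hence closed, so the map is well defined. For clopen $W\subseteq Y$ we compute $f[-]^{-1}\llbracket W\rrbracket=\llbracket f^{-1}[W]\rrbracket$ and $f[-]^{-1}\langle W\rangle=\langle f^{-1}[W]\rangle$. Both are open since $f^{-1}[W]$ is clopen.

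For item (3), $C\cap W$ is closed, so $r_W$ is well defined. For clopen $V$ we have $r_W^{-1}\llbracket V\rrbracket=\llbracket (X\smallsetminus W)\cup V\rrbracket$ and $r_W^{-1}\langle V\rangle=\langle W\cap V\rangle$. Both are open.

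For item (4), we show that the complement of $\Vu(X)$ is open. Suppose $C$ is closed but not an upset, so there are $x\in C$ and $y\notin C$ with $x\le y$. The plan is to find clopens $V\ni x$ and $W\supseteq C$ with $y\notin W$, and with $V\cap\Box W=\varnothing$. The last condition says that every point of $V$ has some point of $X\smallsetminus W$ above it, i.e.\ $V\subseteq{\d}(X\smallsetminus W)$.

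To construct them, pick a clopen $W\supseteq C$ with $y\notin W$; this is possible since $C$ is closed in a Stone space. Then ${\d}(X\smallsetminus W)$ is clopen because $X$ is Esakia, and it contains $x$ since $x\le y\in X\smallsetminus W$. Set $V:={\d}(X\smallsetminus W)$.

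The open set $\langle V\rangle\cap\llbracket W\rrbracket$ contains $C$. Any $D$ in this set contains some $z\in V$, which has some $z'\ge z$ with $z'\notin W$, hence $z'\notin D$. So $D$ is not an upset. This shows the complement of $\Vu(X)$ is open, and therefore $\Vu(X)$ is closed.
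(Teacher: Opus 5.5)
Your proof is correct. Items (3) and (4) coincide with the paper's own argument. In (3) the paper computes $r_W^{-1}[\llbracket V\rrbracket]=\llbracket W\climp V\rrbracket$, which is your $\llbracket (X\smallsetminus W)\cup V\rrbracket$. It obtains the $\langle\cdot\rangle$ case by complementation, where you compute it directly as $\langle W\cap V\rangle$. In (4) it uses the same clopen $W\supseteq C$ avoiding $y$ and the same neighbourhood $\llbracket W\rrbracket\cap\langle{\downarrow}(X\smallsetminus W)\rangle$.

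The difference is in items (1) and (2), which the paper does not prove but cites (Michael for $\V(X)$ being Stone, Kupke for continuity of $f[-]$). You give self-contained standard arguments instead, and they are correct:
\begin{itemize}
\item Hausdorffness and zero-dimensionality follow from $\V(X)\smallsetminus\llbracket W\rrbracket=\langle X\smallsetminus W\rangle$.
\item Compactness follows from Alexander's subbasis lemma, applied to the closed set $X\smallsetminus\bigcup_j V_j$ followed by compactness of $X\smallsetminus W_i$.
\item Continuity of $f[-]$ follows from $f[-]^{-1}[\llbracket W\rrbracket]=\llbracket f^{-1}[W]\rrbracket$ and $f[-]^{-1}[\langle W\rangle]=\langle f^{-1}[W]\rangle$.
\end{itemize}
This buys a fully self-contained lemma at the cost of some length, whereas the paper keeps the preliminaries short by relying on well-known facts. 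One cosmetic point: the compactness argument need not be phrased by contradiction, since it directly exhibits the finite subcover $\llbracket W_i\rrbracket\cup\langle V_{j_1}\rangle\cup\dots\cup\langle V_{j_n}\rangle$.
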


\begin{proof}
    \eqref{eq: vietoris esakia} This is a standard fact (see e.g.\ \cite[Thm.~4.2]{Michael1951TopologiesOS}). 
    
    \eqref{eq: continuous maps lift to Vietoris} This is likewise well known (for a proof see e.g.\ \cite[Lem.~3.9]{Kupke2004}).

    \eqref{eq: restriction maps are continuous}
    Let $W$ be a clopen subset of a Stone space $X$.
    The map $r_{W}\colon \V(X)\to \V(X)$ is well-defined since $W$ is closed. It is moreover continuous, since, for any subbasic clopen in $\V(X)$ of the form $\tru{W_{0}}$ for a clopen $W_{0}\subseteq X$, we have:
    \begin{equation*}
        r_{W}^{-1}[\tru{W_{0}}]=\{C\in \V(X) : C\cap W\subseteq W_{0}\}=\tru{W\climp W_{0}},
    \end{equation*}
    which is clopen; moreover, it follows that the set $r_{W}^{-1}[\langle W_{0} \rangle] = \V(X) \smallsetminus r_{W}^{-1}[\tru{X \smallsetminus W_{0}}]$ is clopen.
    
    \eqref{eq: ordered vietoris esakia} Let $X$ be an Esakia space. Let $K \in \V(X) \smallsetminus \Vu(X)$; we prove that there is a clopen subset of $\V(X)$ to which $K$ belongs and that is disjoint from $\Vu(X)$. Since $K$ is not an upset, there are $x\in K$ and $y \in X$ such that $x\leq y$ and $y\notin K$. By zero-dimensionality, there is a clopen subset $W$ such that $K\subseteq W$ and $y\notin W$. Then, we consider
    \begin{equation*}
        \tru{W}\cap \langle {\d}(X\smallsetminus W)\rangle.
    \end{equation*}
    Note that ${\d}(X\smallsetminus W)$ is clopen because $X$ is Esakia.
    Moreover, note that $K$ belongs to this set, since $K\subseteq W$ and $K \ni x \leq y \notin W$. 
    Furthermore, for every $K'\in \tru{W}\cap \langle {\d}(X\smallsetminus W)\rangle$, by definition we have $K' \subseteq W$ and there are $x'\in K'$ and $y' \in X$ such that $x'\leq y'$ and $y'\notin W$, which implies that $K'$ is not upwards closed.
\end{proof}

\section{Étale-finite Heyting algebras and étale-finite Esakia spaces} \label{sec: etale-finite Heyting algebras}

In this section, we recall the class of étale-finite Heyting algebras, recently introduced by Evgeny Kuznetsov, together with the corresponding Esakia-dual formulation. This is the class of Heyting algebras that our main theorem will show to be topos-admissible. We then collect examples, closure properties, and non-examples, including examples that lie outside the class of Heyting algebras that were previously known to be topos-admissible.

\subsection{Étale-finite Heyting algebras and étale-finite Esakia spaces}

\begin{definition}[\'Etale-finite Heyting algebra]
    Let $H_{0}$ be a finite Heyting algebra. A Heyting algebra homomorphism $f\colon H_{0}\to H$ is said to satisfy \emph{Jibladze's law} if, for every $x\in H$,
    \begin{equation*}
        \bigvee_{x_0\in H_{0}}\bigl(x\leftrightarrow f(x_0)\bigr)=1.
    \end{equation*}
    Given an arbitrary Heyting algebra $H$ and a finite Heyting algebra $H_{0}$, we say that $H$ is $H_{0}$-\emph{étale} if there is a homomorphism $f\colon H_{0}\to H$ satisfying Jibladze's law. We say that $H$ is \emph{étale-finite} if there is a finite Heyting algebra $H_{0}$ such that $H$ is $H_{0}$-étale.
\end{definition}

\begin{remark} \label{r:etale-finite-basic} \hfill
    \begin{enumerate}
        
        \item
        (Jibladze's law generalises the excluded middle)
        Jibladze's law can be seen as a generalisation of the law of excluded middle: taking $H_{0} = 2$ and $f \colon 2 \to H$ to be the unique Heyting homomorphism, the equation defining Jibladze's law becomes 
        \[ 
            (x \leftrightarrow 1) \vee (x \leftrightarrow 0) = 1,
        \]
        i.e., 
        \[
        x\vee \neg x = 1,
        \]
        which is the law of excluded middle.

        Frame-theoretic analogues of this law seem to have first appeared in print in \cite{kockreyes}, where they are attributed to Jibladze (March 1990); they have been studied more systematically by Jibladze in \cite{jibladzerelativebooleanness}.
        This motivates the terminology ``Jibladze's law'', which we propose here.
        
        \item (\'Etale-finite $\leftrightarrow$ finite subalgebra) \label{i: finite algebra condition}
        Replacing $H_0$ by the image $f[H_0]$, we observe that a Heyting algebra $H$ is étale-finite if and only if there is a finite Heyting \emph{subalgebra} $H_{0}$ of $H$ such that, for every $x \in H$, in $H$ we have
        \[
        \bigvee_{x_0\in H_{0}}(x\leftrightarrow x_0) = 1.
        \]
        In \cref{prop: finite set to etale-finite} we will show that it suffices that $H_0$ is a finite \emph{subset} of $H$, not necessarily a \emph{subalgebra}.
    \end{enumerate}
\end{remark}

Given a Heyting algebra $H$, we write $\mathsf{Var}(H)_{h\in H}$  for the variety generated by the algebra $H$ in the signature of Heyting algebras enriched by a constant for each element in $H$. The following was shown in \cite[pp.~11--12]{kuznetsovetale}:

\begin{proposition} \label{prop: characterisation of etale maps}
    For $H_{0}$ a finite Heyting algebra and $H$ an arbitrary Heyting algebra, the following are equivalent:
    \begin{enumerate}
        \item 
        $H$ is an $H_{0}$-étale Heyting algebra;
        
        \item $H$ is the Heyting reduct of some algebra in $\mathsf{Var}(H_{0})_{h\in H_{0}}$.
    \end{enumerate}
\end{proposition}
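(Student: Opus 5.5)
We prove both implications separately.

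\emph{(2) $\Rightarrow$ (1).} Suppose $H$ is the Heyting reduct of an algebra $A$ in $\mathsf{Var}(H_0)_{h\in H_0}$. For each $h\in H_0$ write $c_h$ for the interpretation of the corresponding constant in $A$. The plan is to check that $f\colon H_0\to H$, $h\mapsto c_h$, is a Heyting homomorphism satisfying Jibladze's law.

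For the homomorphism property, observe that $H_0$ itself (with each constant interpreted as itself) satisfies the equations $c_{h\wedge k}=c_h\wedge c_k$, and similarly for $\vee$, $\to$, $0$ and $1$. These equations therefore hold throughout the variety, and in particular in $A$, so $f$ is a Heyting homomorphism.

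For Jibladze's law, the key observation is that, since $H_0$ is finite, the expression
\[
t(x)=\bigvee_{h\in H_0}(x\leftrightarrow c_h)
\]
is a genuine finite term in the enriched signature. In $H_0$ the identity $t(x)=1$ holds, because for any $x\in H_0$ the disjunct with $h=x$ equals $1$. Hence $t(x)=1$ holds in every algebra of the variety, including $A$, which is exactly Jibladze's law for $f$.

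\emph{(1) $\Rightarrow$ (2).} Given $f\colon H_0\to H$ satisfying Jibladze's law, expand $H$ to $A$ by interpreting $c_h$ as $f(h)$. We must show that $A$ lies in $\mathsf{Var}(H_0)_{h\in H_0}$.

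By Birkhoff's theorem it suffices to show that $A$ satisfies every equation $s=s'$ holding in $H_0$ with constants. The first step is to reduce this to prime filters. Since $H$ is distributive, an element equals $1$ if and only if it lies in every prime filter. So, for fixed $\bar a$, it suffices to show $(s(\bar a)\leftrightarrow s'(\bar a))\in P$ for every prime filter $P$ of $H$.

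Now fix a prime filter $P$. By Jibladze's law and primeness, each $a_i$ has some $h_i\in H_0$ with $(a_i\leftrightarrow f(h_i))\in P$. The step I expect to be the main obstacle is that the map $H\to H/P$ is not a Heyting homomorphism: quotients by prime filters are not Heyting congruences. Arguing pointwise would therefore lose the implication.

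To get around this, we instead use the filter congruence. The set $F=\{\,y : y\ge \bigwedge_i (a_i\leftrightarrow f(h_i))\,\}$ is a filter contained in $P$, and Heyting algebras satisfy the substitution property for filters: if $d\le (a_i\leftrightarrow b_i)$ for all $i$, then $d\le (s(\bar a)\leftrightarrow s(\bar b))$ for any term $s$. This can be proved by induction on terms, using that $\leftrightarrow$-equivalence modulo a filter is a Heyting congruence.

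With $d=\bigwedge_i(a_i\leftrightarrow f(h_i))$ we obtain $d\le s(\bar a)\leftrightarrow s(f(\bar h))$, and likewise for $s'$. Since $f$ is a homomorphism preserving the constants, $s(f(\bar h))=f(s^{H_0}(\bar h))$, and $s^{H_0}(\bar h)=s'^{H_0}(\bar h)$ because the equation holds in $H_0$. Hence $d\le s(\bar a)\leftrightarrow s'(\bar a)$, and since $d\in P$, we get $(s(\bar a)\leftrightarrow s'(\bar a))\in P$.

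As this holds for every prime filter $P$, we conclude $s(\bar a)\leftrightarrow s'(\bar a)=1$, i.e.\ $A$ satisfies $s=s'$. This completes the proof.
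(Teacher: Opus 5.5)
Your proof is correct. The paper gives no argument of its own for this proposition; it quotes it from Kuznetsov \cite[pp.~11--12]{kuznetsovetale}, so your write-up is a self-contained substitute.

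Direction (2)$\Rightarrow$(1) is right. The diagram equations of $H_0$ and the one-variable identity $\bigvee_{h\in H_0}(x\leftrightarrow c_h)=1$ are equations of the enriched signature that hold in $H_0$, so they hold throughout the variety. In (1)$\Rightarrow$(2), three steps are sound: testing $s(\bar a)\leftrightarrow s'(\bar a)=1$ at every prime filter, the substitution property of $\leftrightarrow$, and treating $f$ as a homomorphism of enriched algebras to obtain $s(f(\bar h))=f(s^{H_0}(\bar h))$.

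One remark in your commentary is inaccurate. For every filter $F$ of a Heyting algebra, prime or not, the relation $x\equiv_F y \iff (x\leftrightarrow y)\in F$ is a Heyting congruence. What fails to be a Heyting homomorphism is the two-valued lattice map $H\to\mathbf{2}$ that records membership in $P$. So the detour through ${\uparrow}d$ is harmless but unnecessary.

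Working directly in $H/{\equiv_P}$ also makes your argument structural:
\begin{enumerate}
\item The quotient $H/{\equiv_P}$ is nontrivial and has a finitely join-irreducible top; this is exactly primeness of $P$.
\item Jibladze's law descends to the quotient. By \cref{l:join-irreducible}, the map $h\mapsto[f(h)]$ is therefore a surjective homomorphism of enriched algebras $H_0\to A/{\equiv_P}$.
\item The congruences $\equiv_P$ separate points, so $A$ embeds into $\prod_P A/{\equiv_P}$.
\end{enumerate}
Hence $A\in\mathbb{SPH}(H_0)$ directly, with no need to pass through equations and Birkhoff's HSP theorem.

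This is the same mechanism the paper uses in the proof of \cref{prop: finite set to etale-finite}: subdirectly irreducible quotients have a join-irreducible top, and then \cref{l:join-irreducible} applies. Your version replaces subdirectly irreducible quotients by the more concrete quotients at prime filters.
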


\begin{remark}[\'Etale-finite $\Rightarrow$ locally finite]\label{rem: local finiteness from etale-finiteness}
    Every étale-finite Heyting algebra is locally finite. Indeed, by \cref{prop: characterisation of etale maps}, an étale-finite Heyting algebra $H$ is the Heyting reduct of some algebra in $\mathsf{Var}(H_{0})_{h\in H_{0}}$ for $H_{0}$ a finite Heyting algebra; thus, $H \in \mathsf{Var}(H_{0})$.
    Since every algebra in a variety generated by finitely many finite algebras is locally finite \cite[Thm.~10.16]{BurrisSankappanavar}, every algebra in $\mathsf{Var}(H_{0})$---in particular, $H$---is locally finite.
\end{remark}

We will be especially concerned with the duals of homomorphisms $f\colon H_{0}\to H$ satisfying Jibladze's law. We recall from \cite{kuznetsovetale} the following notion:

\begin{definition}[Strict p-morphism]
    Let $f \colon Y\to X$ be an order-preserving map between posets. We say that $f$ is a \emph{strict p-morphism} if, for all $y\in Y$ and $x\in X$ with $f(y)\leq x$, there is a unique $y'\geq y$ such that $f(y')=x$. 
    \[
    \begin{tikzpicture}[>=stealth, scale=1.1]
        \node[circle, draw, fill=blue!15, inner sep=2pt, label=left:$y$] (y) at (0,0) {};
        \node[circle, draw, fill=blue!15, inner sep=2pt, label=left:$\exists! y'$] (yp) at (0,1.5) {};
        \draw[dashed] (y) -- (yp);
    
        \node[circle, draw, fill=red!15, inner sep=2pt, label=right:$f(y)$] (fy) at (3,0) {};
        \node[circle, draw, fill=red!15, inner sep=2pt, label=right:$x$] (x) at (3,1.5) {};
        \draw (fy) -- (x);
    
        \draw[->] (y) -- (fy);
        \draw[->, dashed] (yp) -- (x);
    \end{tikzpicture}
    \]
\end{definition}

The following is an equivalent formulation of the same concept:

\begin{remark}[Strict p-morphism $\Leftrightarrow$ order-iso on principal upsets, {\cite[Lem.~2.10]{kuznetsovetale}}]\label{rem: external characterisation of etale maps}
    An order-preserving map $f \colon Y \to X$ is a strict p-morphism if and only if, for each $y\in Y$, $f$ restricts to an order-isomorphism between ${\u}y$ and ${\u}f(y)$.
\end{remark}

\begin{definition}[\'Etale-finite Esakia space]
    We say that an Esakia space $X$ is \emph{étale-finite} if there is a continuous strict p-morphism from $X$ to some finite Esakia space.
\end{definition}

\begin{theorem}[Strictness is dual to Jibladze's law, {\cite[Cor.~4.7]{kuznetsovetale}}] \label{t:strict-dual-to-Jib}
    Let $\acute{e} \colon X \to X_0$ be a continuous p-morphism from an Esakia space $X$ to a finite Esakia space $X_0$, and let $f \colon H_0 \to H$ be the dual Heyting algebra homomorphism.
    The p-morphism $\acute{e}$ is strict if and only if $f$ satisfies Jibladze's law. 
\end{theorem}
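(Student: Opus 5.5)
We translate Jibladze's law into a statement about clopen upsets of $X$ and then check it pointwise. Identify $H=\ClopUp(X)$, $H_0=\mathcal{P}^{\uparrow}(X_0)$ (all upsets of the finite poset $X_0$), and $f=\acute{e}^{-1}[-]$.

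For clopen upsets $U,V$, the biconditional $U\leftrightarrow V$ is $\Box\bigl((U\climp V)\cap(V\climp U)\bigr)$. Its points are those $y$ such that $U$ and $V$ agree on ${\u}y$. Since joins of finitely many clopen upsets are unions, Jibladze's law becomes a covering condition:
\[
\text{for every } U\in\ClopUp(X) \text{ and every } y\in X,\ \text{there is an upset } V_0\subseteq X_0 \text{ with } U\cap{\u}y=\acute{e}^{-1}[V_0]\cap{\u}y.
\]

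\emph{Strict implies Jibladze.} Fix $U$ and $y$. By \cref{rem: external characterisation of etale maps}, $\acute{e}$ restricts to an order-isomorphism ${\u}y\to{\u}\acute{e}(y)$. Set $V_0={\u}\acute{e}[U\cap{\u}y]$, which is an upset of $X_0$. Because $U\cap{\u}y$ is an upset of ${\u}y$ and the restriction is an isomorphism, the following holds for $z\geq y$: $\acute{e}(z)\in V_0$ exactly when $z\in U$. This gives the displayed condition.

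\emph{Jibladze implies strict.} We argue by contrapositive. The p-morphism property already gives existence of $y'$, and since $\acute{e}$ preserves order only uniqueness remains. Suppose $y\le y_1,y_2$ with $y_1\neq y_2$ and $\acute{e}(y_1)=\acute{e}(y_2)$. We may assume $y_1\nleq y_2$. Priestley separation then gives a clopen upset $U$ with $y_1\in U$ and $y_2\notin U$.

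Now any upset $V_0\subseteq X_0$ contains either both or neither of $\acute{e}(y_1)=\acute{e}(y_2)$. Hence $\acute{e}^{-1}[V_0]$ cannot agree with $U$ on ${\u}y$, which contains both $y_1$ and $y_2$. So $y$ lies in no set $U\leftrightarrow\acute{e}^{-1}[V_0]$, and the join is not $X$. This contradicts Jibladze's law.

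The main obstacle is the first reformulation: computing $\leftrightarrow$ in $\ClopUp(X)$ as $\Box$ of the classical biconditional, and using that finite joins are unions. After that, both directions are direct pointwise arguments.
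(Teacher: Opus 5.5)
Your proof is correct. The paper gives no proof of this statement; it imports it from \cite[Cor.~4.7]{kuznetsovetale}. So your argument is a self-contained alternative to a citation rather than a variant of an argument in the text.

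Your reduction of Jibladze's law to a pointwise condition is right. That condition is: for all $U\in\ClopUp(X)$ and $y\in X$ there is $V_0\in\Up(X_0)$ with $U\cap{\u}y=\acute{e}^{-1}[V_0]\cap{\u}y$. It follows from $U\leftrightarrow V=\Box(U\climp V)\cap\Box(V\climp U)$ together with the fact that the join over the finite set $H_0$ is a union.

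Your forward direction is essentially the argument the paper gives for the discrete case in \cref{ex: etale-finite Heyting algebras}\eqref{i:discrete}, where $U_0\coloneqq\acute{e}[{\u}p\cap U]$. In your version the upward closure is unnecessary, because $\acute{e}[U\cap{\u}y]$ is already an upset by \cref{rem: remark on p-morphisms}.

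Your converse has no counterpart anywhere in the paper. You take two distinct points above $y$ with the same image, separate them by a clopen upset using Priestley separation, and observe that no $\acute{e}^{-1}[V_0]$ can agree with that upset on ${\u}y$. This is short and complete.

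The two routes buy different things. Citing Kuznetsov places the result inside his framework, where Jibladze's law is also tied to membership in $\mathsf{Var}(H_0)_{h\in H_0}$ (\cref{prop: characterisation of etale maps}). Your route gives a direct duality-theoretic proof that uses only the dual description of $\leftrightarrow$ and Priestley separation.
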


\begin{corollary}[\'Etale-finite Esakia spaces $\leftrightarrow$ étale-finite Heyting algebras] \label{cor:etale-finite-iff-etale-finite}
    An Esakia space is étale-finite if and only if its dual Heyting algebra is étale-finite.
\end{corollary}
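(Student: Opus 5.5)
The plan is to transfer everything across Esakia duality and then apply \cref{t:strict-dual-to-Jib}. Fix an Esakia space $X$ with dual Heyting algebra $H = \ClopUp(X)$. We argue the two directions separately.

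\textbf{Forward direction.} Suppose $X$ is étale-finite.
Then there is a continuous strict p-morphism $\acute{e} \colon X \to X_0$ with $X_0$ a finite Esakia space.
A strict p-morphism is in particular a p-morphism, since uniqueness of the lift implies its existence. So $\acute{e}$ is a morphism of $\Esa$.
Its dual $f = \acute{e}^{-1}[-] \colon \ClopUp(X_0) \to \ClopUp(X)$ is a Heyting homomorphism.
Its domain $H_0 = \ClopUp(X_0)$ is finite, because $X_0$ is finite.
By \cref{t:strict-dual-to-Jib}, $f$ satisfies Jibladze's law. Hence $H$ is $H_0$-étale, and so étale-finite.

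\textbf{Converse direction.} Suppose $H$ is étale-finite.
Then there is a finite Heyting algebra $H_0$ and a homomorphism $f \colon H_0 \to H$ satisfying Jibladze's law.
Let $X_0 = \Spec(H_0)$. This is a finite Esakia space: a finite algebra has finitely many prime filters, and a finite Priestley space carries the discrete topology.
By Esakia duality, $f$ corresponds, up to the isomorphism $H \cong \ClopUp(X)$, to a continuous p-morphism $\acute{e} \colon X \to X_0$ whose dual is $f$.
By \cref{t:strict-dual-to-Jib}, $\acute{e}$ is strict, so $X$ is étale-finite.

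\textbf{Main technical point.} The only subtle step is transporting Jibladze's law along the isomorphisms $H \cong \ClopUp(X)$ and $H_0 \cong \ClopUp(\Spec(H_0))$.
This causes no difficulty, because Jibladze's law is stated purely in terms of the Heyting operations and a finite join. It is therefore invariant under pre- and post-composition with Heyting isomorphisms.
Concretely, if $f$ satisfies the law and $\alpha \colon H_0' \to H_0$ and $\beta \colon H \to H'$ are isomorphisms, then $\beta \circ f \circ \alpha$ satisfies the law as well.
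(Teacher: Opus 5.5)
Your proof is correct and is exactly the argument the paper intends. The paper states this corollary without a separate proof, as an immediate consequence of \cref{t:strict-dual-to-Jib} together with Esakia duality: full faithfulness realises $f$ as $\acute{e}^{-1}[-]$, and $\Spec(H_0)$ is finite. Your remark that Jibladze's law is invariant under composition with Heyting isomorphisms handles the only bookkeeping point.
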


Although we defined étale-finite \emph{Esakia} spaces, we note that these are the same as ``étale-finite \emph{Priestley} spaces'':

\begin{lemma}[Priestley étale-finite is Esakia]\label{lem: priestley etale-finite is esakia}
    If $\acute{e}\colon X\to X_{0}$ is a continuous strict p-morphism with $X$ a Priestley space and $X_{0}$ a finite poset, then $X$ is an Esakia space.
\end{lemma}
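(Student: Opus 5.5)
We must show that for every clopen $W \subseteq X$, the set ${\d}W$ is clopen. The idea is to decompose ${\d}W$ along the finitely many fibres of $\acute{e}$ and use the strict p-morphism property to write each piece as a preimage under a continuous map. Continuity and finiteness of $X_0$ (with the discrete topology, since $X_0$ is finite and Hausdorff as the codomain of a continuous map from a Priestley space; even if not, we only use that fibres $\acute{e}^{-1}[\{a\}]$ are clopen, which we take as part of the set-up, $X_0$ being a finite Esakia/Priestley space hence discrete) give that each fibre $F_a \coloneqq \acute{e}^{-1}[\{a\}]$ is clopen.

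Fix $a \le b$ in $X_0$. By strictness, for each $x\in F_a$ there is a unique $y\ge x$ with $\acute{e}(y)=b$; call it $s_{ab}(x)$. This defines a map $s_{ab}\colon F_a\to F_b$. Moreover, by \cref{rem: external characterisation of etale maps}, ${\u}x$ is order-isomorphic to ${\u}\acute{e}(x)$ via $\acute{e}$, so ${\u}x=\{s_{ab}(x) : b\ge a\}$. Hence for clopen $W$,
\[
{\d}W\cap F_a=\bigcup_{b\ge a} s_{ab}^{-1}[W\cap F_b],
\]
and ${\d}W=\bigcup_{a\in X_0}\bigcup_{b\ge a}s_{ab}^{-1}[W\cap F_b]$, a finite union. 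So it suffices to show that each $s_{ab}$ is continuous.

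This continuity is the main step. Since $F_a,F_b$ are compact Hausdorff, I will show that the graph $G=\{(x,y)\in F_a\times F_b : x\le y\}$ of $s_{ab}$ is closed; a function between compact Hausdorff spaces with closed graph is continuous. Closedness of $G$ follows because the order of a Priestley space is closed in $X\times X$ (standard consequence of the separation axiom: if $x\nleq y$, take clopen upset $U$ with $x\in U$, $y\notin U$; then $U\times(X\smallsetminus U)$ is a neighbourhood of $(x,y)$ missing ${\le}$), and $G={\le}\cap(F_a\times F_b)$, which equals the graph of $s_{ab}$ exactly by uniqueness in the definition of strict p-morphism. Thus each $s_{ab}$ is continuous, each $s_{ab}^{-1}[W\cap F_b]$ is clopen in $F_a$ and hence in $X$, and ${\d}W$ is clopen, so $X$ is Esakia.
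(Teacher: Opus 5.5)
Your proof is correct and is essentially the paper's proof. You use the same fibre maps $s_{ab}$ (the paper's $p_{x_0,y_0}$), the same finite-union formula for ${\d}W$, and you derive continuity of $s_{ab}$ from closedness of the Priestley order; your closed-graph argument unpacks to the paper's identity $s_{ab}^{-1}[C]=F_a\cap{\d}C$ with ${\d}C$ closed.

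One small correction to your parenthetical: being the codomain of a continuous map does not make $X_0$ Hausdorff. The fibres are clopen only because a finite poset carries the discrete topology by convention, which is the reading you (and the paper) ultimately adopt.
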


\begin{proof}
    Since $\acute{e}$ is a continuous strict p-morphism, for each $x_{0}\in X_{0}$ the fibre $S_{x_{0}}=\acute{e}^{-1}[\{x_{0}\}]$ is clopen.
    
    Let $x_{0}\leq y_{0}$ in $X_{0}$. By strictness of the p-morphism, for every $x\in S_{x_{0}}$ there is a unique point $y\in S_{y_{0}}$ such that $x\leq y$. We denote it by $p_{x_{0},y_{0}}(x)$, thus obtaining a map $p_{x_{0},y_{0}}\colon S_{x_{0}}\to S_{y_{0}}$.
    For every $A\subseteq S_{y_{0}}$, we have
    \begin{equation} \label{eq:inverse}
        p_{x_{0},y_{0}}^{-1}[A]
        =
        S_{x_{0}}\cap {\downarrow}A.
    \end{equation}
    Indeed, $x\in S_{x_{0}}$ is sent into $A$ precisely when the unique point of $S_{y_{0}}$ above $x$ belongs to $A$; equivalently, when $x\leq a$ for some $a\in A$.
    In particular, if $A\subseteq S_{y_{0}}$ is closed, then $\d A$ is closed in $X$ (since the downward closure of a closed set is closed in a Priestley space, see e.g.\ \cite[Exercise~3.2.7.b]{Gehrke2024}); hence, the set $p_{x_{0},y_{0}}^{-1}[A] = S_{x_{0}}\cap {\downarrow}A$ is closed. Thus, $p_{x_{0},y_{0}}$ is continuous.
    
    Now let $W\subseteq X$ be clopen. Since the fibres $S_{y_{0}}$ partition $X$, we have
    \begin{equation}\label{eq:partition}
        W=\bigcup_{y_{0}\in X_{0}}(W\cap S_{y_{0}}).
    \end{equation}
    Using \eqref{eq:partition} and \eqref{eq:inverse}, we obtain
    \[
        {\downarrow}W =  \bigcup_{y_{0}\in X_{0}}\d (W\cap S_{y_{0}}) =
        \bigcup_{y_{0}\in X_{0}}
        \bigcup_{x_{0}\leq y_{0}}
        \left(S_{x_{0}}\cap {\downarrow}(W\cap S_{y_{0}})\right)
        =
        \bigcup_{y_{0}\in X_{0}}
        \bigcup_{x_{0}\leq y_{0}}
        p_{x_{0},y_{0}}^{-1}[W\cap S_{y_{0}}].
    \]
    For each $y_{0}$, the set $W\cap S_{y_{0}}$ is clopen in
    $S_{y_{0}}$. Since $p_{x_{0},y_{0}}$ is continuous, its inverse image
    is clopen in $S_{x_{0}}$, and hence clopen in $X$, because
    $S_{x_{0}}$ is clopen in $X$. Since $X_{0}$ is finite,
    ${\downarrow}W$ is a finite union of clopen subsets of $X$.
    
    Therefore, $X$ is an Esakia space.
\end{proof}

\subsection{Examples, closure properties, and non-examples}

\begin{example}[Examples of étale-finite Heyting algebras and Esakia spaces]\label{ex: etale-finite Heyting algebras}
    \hfill
    \begin{enumerate}
    
        \item (Finite Heyting algebras, finite Esakia spaces)
        All finite Heyting algebras (= finite bounded distributive lattices) and, dually, all finite Esakia spaces (= finite posets) are clearly étale-finite.
    
        \item \label{eq: Stone spaces} (Boolean algebras, Stone spaces) 
        If $X$ is a Stone space, seen as an Esakia space with the discrete order, then the unique function $X\to \bullet$ to the one-element space $\bullet$ is a continuous strict p-morphism. Therefore, every Stone space is étale-finite, and every Boolean algebra is étale-finite.
        
        \item \label{i: Alexandroff} 
        (The space $\alpha\N$ with a new isolated top, the algebra $\fincofnaturals$ with a new bottom) Consider the Alexandroff compactification $\alpha \N =\N\sqcup\{\infty\}$ of the set of natural numbers. 
        It is well known (see, e.g., \cite[Example~1.9]{Koppelberg1989-lg}) that the dual of this space is the Boolean algebra $\fincofnaturals$ of all subsets of $\N$ that are finite or cofinite. 
        Let $(\alpha \N)^{\top}=(\alpha \N)\cup \{\top\}$, where $\{\top\}$ is added as a clopen, and order the space by declaring every point in $\alpha \N$ to be below $\top$ (see Figure \ref{fig:a-fantastic-example}). 
        Then $(\alpha \N)^{\top}$ is an Esakia space; its dual Heyting algebra adds a new bottom element to $\fincofnaturals$.
        \begin{figure}[htbp]
            \centering
            \begin{tikzpicture}
                \node at (0,0) {$\bullet$};
                \node at (0,0.3) {$\top$};
                \node at (0,-1) {$\bullet$};
                \node at (-1,-1) {$\bullet$};
                \node at (-2,-1) {$\bullet$};
                \node at (1,-1) {$\bullet$};
                \node at (2,-1) {$\bullet$};
                \node at (3,-1) {$\bullet$};
                \node at (4,-1) {$\dots$};
                \node at (5,-1) {$\bullet$};
                \node at (5,-0.7) {$\infty$};
        
                \draw (5,-1) -- (0,0);
        
                \draw (0,0) -- (0,-1) -- (0,0) -- (-1,-1) -- (0,0) -- (-2,-1) -- (0,0) -- (1,-1) -- (0,0) -- (2,-1) -- (0,0) -- (3,-1);
                
            \end{tikzpicture}
            \qquad
            \begin{tikzpicture}[scale=0.6]
        
                \filldraw[color=gray!50, fill=blue!2, thick] (0,0) ellipse (1.5 and 1.75);
                
                \node at (0,0) {$\fincofnaturals$};
                \node at (0,-1.75) {$\bullet$};
                \node at (0,-2.75) {$\bullet$};
                \node at (0.4,-2) {$0$};
                \node at (0,1.75) {$\bullet$};
                \node at (0,2.25) {$1$};
                \node at (0.3,-2.75) {$\bot$};
                
                \draw (0,-2.75) -- (0,-1.75);
            
            \end{tikzpicture}      
            \caption{The space $(\alpha \N)^{\top}$ and its dual Heyting algebra.}
            \label{fig:a-fantastic-example}
        \end{figure}
        
        Let $\mathbf{2}=0<1$ be the two-element chain with the discrete topology. Consider the map $\acute{e}\colon (\alpha\N)^{\top}\to \mathbf{2}$ that sends $\top$ to $1$ and every other point to $0$. Then $\acute{e}$ is a continuous strict p-morphism. Therefore, $(\alpha \N)^\top$ is an \emph{étale-finite Esakia space}.
        
        \item More generally, let $X$ be an étale-finite Esakia space, say with $\acute{e}\colon X\to X_{0}$ a continuous strict p-morphism, and let $Y_{0}$ be a finite poset.
        We can consider $X\oplus Y_{0}$, whose underlying topological space is the disjoint union $X\sqcup Y_{0}$ of $X$ and the discrete space $Y_0$, and whose partial order is defined by letting every point of $X$ be below every point of $Y_{0}$ (see e.g., \cite[Def.~4.1.12]{bezhanishviliphdthesis}).
        Then, $X\oplus Y_{0}$ is an étale-finite Esakia space. Indeed, the map $\acute{e}^{Y_{0}}\colon X\oplus Y_{0}\to X_{0}\oplus Y_{0}$ sending $x\in X$ to $\acute{e}(x)$ and $y_{0}$ to $y_{0}$ is a continuous strict p-morphism.
        
        Dually, given an étale-finite Heyting algebra $H$ and a finite Heyting algebra $H_{0}$, their \emph{vertical sum} $H_{0}\mathbin{\overline{\oplus}} H$, defined by taking the union of the algebras and identifying $1_{H_0}$ with $0_H$ (see \cite[Def.~4.1.13]{bezhanishviliphdthesis}), is an étale-finite Heyting algebra; see Figure \ref{fig:another-fantastic-example}.
        \begin{figure}[h]
            \centering
        
            \begin{tikzpicture}
                \filldraw[color=gray!50, fill=blue!2, thick] (0,0) ellipse (3.25 and 0.25);

                \filldraw[color=gray!50, fill=blue!2, thick] (0,1.5) ellipse (2.5 and 0.25);

                \node at (0,0) {$X$};

                \node at (0,1.5) {$Y_{0}$};

                \draw[->,dotted] (0,0.3) -- (0,1.2);
                \draw[->,dotted] (-1,0.3) -- (-1,1.2);
                \draw[->,dotted] (1,0.3) -- (1,1.2);
                
            \end{tikzpicture}
            \qquad
            \begin{tikzpicture}[scale=0.5]
                \filldraw[color=gray!50, fill=blue!2, thick] (0,-3) ellipse (0.75 and 1);
                \filldraw[color=gray!50, fill=blue!2, thick] (0,0) ellipse (1.5 and 2);
                
                \node at (0,0) {$H$};
                \node at (0,-3) {$H_{0}$};
                \node at (0,-2) {$\bullet$};
                \node at (0,-4) {$\bullet$};
                \node at (0,2) {$\bullet$};
            \end{tikzpicture}
            \caption{The space $X\oplus Y_{0}$ and its dual Heyting algebra $H_{0}\mathbin{\overline{\oplus}}H$.}
            \label{fig:another-fantastic-example}
        \end{figure}

        \item (Discrete version) \label{i:discrete}
        Suppose that $\acute{e}\colon P\to P_{0}$ is a strict p-morphism from a poset $P$ to a finite poset $P_0$. We show that the inverse-image function $\acute{e}^{-1}[-]\colon \Up(P_{0})\to \Up(P)$ satisfies Jibladze's law, and hence that $\Up(P)$ is étale-finite. 
        
        First, notice that every point $p\in P$ has only finitely many successors, i.e., ${\u}p$ is finite.
        For each upset $U\in \Up(P)$, we show that, for each $p \in P$, there is an upset $U_{0}\subseteq P_{0}$ such that
        \begin{equation*}
            p\in (U\leftrightarrow \acute{e}^{-1}[U_{0}]),
        \end{equation*}
        where $\leftrightarrow$ is the biimplication in the Heyting algebra $\Up(P)$.
        Set
        \[
        U_{0}\coloneqq \acute{e}[{\u}p\cap U];
        \]
        this is an upset because ${\u}p\cap U$ is an upset and $\acute{e}$ is a p-morphism (see \cref{rem: remark on p-morphisms}). Then we show that $p\in (U\leftrightarrow \acute{e}^{-1}[U_{0}])$.
        To prove $p\in (U \rightarrow \acute{e}^{-1}[U_{0}])$, let $p' \geq p$ be such that $p'\in U$; then $p'\in {\u}p\cap U$, and so $p' \in \acute{e}^{-1}[\acute{e}[{\u}p \cap U]] = \acute{e}^{-1}[U_0]$. 
        To prove $p\in (\acute{e}^{-1}[U_{0}] \rightarrow U)$, let $p' \geq p$ be such that $p'\in \acute{e}^{-1}[U_0] = \acute{e}^{-1}[\acute{e}[{\u}p \cap U]]$; then, there is $p_0 \in {\u}p \cap U$ such that $\acute{e}(p')=\acute{e}(p_0)$; because $\acute{e}$ is a strict p-morphism, $p'=p_0$ and so $p'\in U$.
        Since $p$ was arbitrary, we have
        \[
        1_{\Up(P)} = \bigcup_{U_0 \in \Up(P_0)} U \leftrightarrow \acute{e}^{-1}[U_0].
        \]
        This shows Jibladze's law for the map $\acute{e}^{-1}[-]$.

        \item (G\"odel $\KM$-algebras of bounded depth) For the reader familiar with $\KM$-algebras, these provide a natural source of examples: all Heyting reducts of $\KM$-algebras belonging to the variety generated by a finite chain are étale-finite.
        For details, see \cref{sec: a further example}. 
    \end{enumerate}
\end{example}

\begin{lemma}[Closure properties on quotients of lattices]\label{lem: closure properties on quotients of lattices}
    \hfill
    \begin{enumerate}
        \item
        \label{i:quotient-of-finite-product}
        Let $h \colon L \to M$ be a bounded lattice homomorphism between bounded lattices, suppose that $M$ has only $0$ and $1$ as complemented elements, and $0 \neq 1$ in $M$.
        If $L$ is a finite product $L_1 \times \dots \times L_n$ of bounded distributive lattices,
        then there is $i \in \{1, \dots, n\}$ such that $h$ factors through the projection $L_1 \times \dots \times L_n \twoheadrightarrow L_{i}$.
        \[
        \begin{tikzcd}
            L_1 \times \dots \times L_n \arrow[two heads]{d}{} \arrow{r}{h}& M\\
            L_i \arrow[dashed]{ru}
        \end{tikzcd}
        \]

        \item \label{eq: quotient of product of chain} 
        Let $h \colon L \twoheadrightarrow M$ be a surjective bounded lattice homomorphism between bounded lattices, and suppose that $M$ has only $0$ and $1$ as complemented elements.
        If $L$ is an arbitrary product of chains, then $M$ is a chain.
        
        \item \label{eq: complete factor} Suppose that $h \colon L \twoheadrightarrow M$ is a surjective lattice homomorphism between lattices, that $L$ is complete and that $M$ is countable.
        Then $M$ is complete.

    \end{enumerate}
\end{lemma}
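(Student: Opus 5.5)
For \eqref{i:quotient-of-finite-product} I will use the central idempotents of the product. For each $i$, let $e_i \in L_1\times\dots\times L_n$ be the element with $1$ in coordinate $i$ and $0$ elsewhere. These elements are complemented (the complement of $e_i$ is $\bigvee_{j\neq i}e_j$), pairwise disjoint, and satisfy $e_1\vee\dots\vee e_n=1$. A bounded lattice homomorphism preserves complemented pairs, so each $h(e_i)$ is complemented in $M$ and hence lies in $\{0,1\}$. Since $\bigvee_i h(e_i)=1\neq 0$, some $h(e_i)$ equals $1$. Let $\iota_i\colon L_i\to L$ send $a$ to the tuple with $a$ in coordinate $i$ and $0$ elsewhere. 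Then $\iota_i$ preserves binary meets and joins and $0$, and $\iota_i(1)=e_i$. Hence $g\coloneqq h\circ\iota_i$ is a bounded lattice homomorphism, since $g(1)=h(e_i)=1$. It factors $h$, because $x\wedge e_i=\iota_i(\pi_i(x))$ gives
\[
h(x)=h(x)\wedge h(e_i)=h(x\wedge e_i)=g(\pi_i(x)).
\]

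For \eqref{eq: quotient of product of chain}, let $L=\prod_{j\in J}C_j$ and take $a,b\in M$. By surjectivity, lift them to $x,y\in L$. Because each $C_j$ is a chain, $J$ splits as $S\sqcup T$, where $S=\{j : x_j\le y_j\}$ and $T=\{j : x_j>y_j\}$. Let $e_S$ and $e_T$ be the corresponding characteristic elements. They are complements of each other, so $h(e_S)\in\{0,1\}$ and $h(e_T)$ is its complement. Coordinatewise we have $x\wedge e_S\le y$ and $y\wedge e_T\le x$. If $h(e_S)=1$, then $a=h(x\wedge e_S)\le b$. Otherwise $h(e_T)=1$, and then $b\le a$. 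Thus $M$ is a chain. The trivial case $0=1$ in $M$ is harmless.

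For \eqref{eq: complete factor}, I first note that $M$ is bounded, since it is the image of the bounded lattice $L$. Let $S\subseteq M$; as $M$ is countable, write $S=\{s_1,s_2,\dots\}$. The finite case, with $s_n$ repeated, and the empty case are handled identically. The same holds for the set of upper bounds of $S$, which is nonempty because it contains $1$: enumerate it as $u_1,u_2,\dots$ (with repetitions if finite). Choose lifts $h(a_n)=s_n$ and $h(c_k)=u_k$, and put
\[
d_n\coloneqq (a_1\vee\dots\vee a_n)\wedge c_1\wedge\dots\wedge c_n .
\]
Then $(d_n)$ is increasing, and $h(d_n)=(s_1\vee\dots\vee s_n)\wedge u_1\wedge\dots\wedge u_n=s_1\vee\dots\vee s_n$, since every $u_k$ bounds all $s_m$. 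Let $d=\bigvee_n d_n$ in $L$. On the one hand, $h(d)\ge h(d_n)\ge s_n$ for all $n$, so $h(d)$ is an upper bound of $S$. On the other hand, for each fixed $k$, monotonicity gives $d=\bigvee_{n\ge k}d_n\le c_k$, so $h(d)\le u_k$. Therefore $h(d)=\bigvee S$ in $M$.

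The main obstacle is this diagonalisation in \eqref{eq: complete factor}. The point is that $h$ need not preserve infinite joins, so the naive join of the lifts $a_n$ may fail to lie below a given upper bound. The countability of the set of upper bounds is exactly what lets us meet each lift with all the relevant $c_k$ in a single increasing sequence.
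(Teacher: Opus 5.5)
Parts (1) and (2) are correct and are essentially the paper's arguments. In (1) you build the factor $h\circ\iota_i$ explicitly instead of proving $\mathsf{Ker}(\pi_i)\subseteq\mathsf{Ker}(h)$, but the key computation $h(x)=h(x\wedge e_i)$ is the same. Part (3) takes a different, self-contained route: the paper instead cites the existence of an order-preserving section $M\to L$ of $h$ for countable $M$ (the ``weaving map'' of Dean and Rival) and notes that a poset retract of a complete lattice is complete.

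However, one step of your diagonalisation is false. The sequence $d_n=(a_1\vee\dots\vee a_n)\wedge c_1\wedge\dots\wedge c_n$ is \emph{not} increasing in general. The first factor grows while the second shrinks, and $d_n\le d_{n+1}$ would need $d_n\le c_{n+1}$, which nothing guarantees. For instance, take $L=\mathbf{2}\times\mathbf{2}$, $h=\pi_1$ and $S=\{0\}$. Enumerate the upper bounds as $u_1=1$ and $u_k=0$ for $k\ge 2$, and choose the lifts $a_n=(0,1)$, $c_1=(1,1)$ and $c_k=(0,0)$ for $k\ge 2$. Then $d_1=(0,1)$ and $d_n=(0,0)$ for $n\ge 2$, so $d=(0,1)$. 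But $\bigvee_{n\ge 2}d_n=(0,0)$ and $d\not\le c_2$. So both the identity $d=\bigvee_{n\ge k}d_n$ and the inequality $d\le c_k$ you derive from it fail. Your closing remark that countability yields ``a single increasing sequence'' is exactly what does not hold for this choice of $d_n$.

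The conclusion you actually need, $h(d)\le u_k$, is still true, and the repair is short. Fix $k$ and write $d=(d_1\vee\dots\vee d_{k-1})\vee e_k$ with $e_k\coloneqq\bigvee_{n\ge k}d_n$. Every $d_n$ with $n\ge k$ has $c_k$ among its meetands, so $e_k\le c_k$. Since $h$ preserves finite joins, $h(d)=h(d_1)\vee\dots\vee h(d_{k-1})\vee h(e_k)\le u_k$, because $h(d_j)=s_1\vee\dots\vee s_j\le u_k$ and $h(e_k)\le h(c_k)=u_k$.

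Alternatively, you can make the sequence genuinely increasing by choosing it inductively. Start from $d_1\coloneqq a_1$, set $c_k'\coloneqq c_k\vee d_k$ (which still lifts $u_k$), and let $d_{n+1}\coloneqq(d_n\vee a_{n+1})\wedge c_1'\wedge\dots\wedge c_n'$. Then every $d_n$ lies below every $c_k'$.

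With either fix, your argument proves (3) from scratch, without importing the weaving-map theorem. The paper's route, in exchange, handles all subsets of $M$ at once through a single section. That section can itself be built by a similar finite-step induction over an enumeration of $M$.
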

\begin{proof}
    \eqref{i:quotient-of-finite-product}
    Let $h\colon L\rightarrow M$ be a lattice homomorphism.  Consider for each $1\leq i\leq n$ the element $\chi_{i}\in L_{1}\times\dots\times L_{n}$ where $\chi_{i}(j)=1$ if and only if $i=j$, and otherwise $\chi_{i}(j)=0$. Since $\chi_{i}$ is complemented and bounded lattice homomorphisms preserve complemented elements, the element $h(\chi_{i})$ is complemented. Hence, by assumption, either $h(\chi_{i})=1$ or $h(\chi_{i})=0$.

    Note that there can be at most one $i$ such that $h(\chi_{i})=1$, since, if $i\neq j$, then $\chi_{i}\wedge\chi_{j}=0$ and so $h(\chi_{i}) \wedge h(\chi_{j})=0 \neq 1 \wedge 1$. Also there must be at least one $i$ such that $h(\chi_{i})=1$, since $\chi_{1}\vee\dots\vee \chi_{n}=1$ and so $h(\chi_{1})\vee\dots\vee h(\chi_{n}) = 1 \neq 0 \lor \dots \lor 0$. Thus, let $i$ be the unique element of $\{1, \dots, n\}$ such that $h(\chi_{i})=1$.    Consider the kernel congruence $\mathsf{Ker}(h)=\{(a,b)\in L\times L : h(a)=h(b)\}$. Then we claim that $ \mathsf{Ker}(\pi_{i})\subseteq \mathsf{Ker}(h)$.
    Let $a, b \in L$ be such that $\pi_i(a) = \pi_i(b)$.
    For every $c \in L$ we have
    \[
    c = \bigvee_{j = 1}^n \chi_j \land c.
    \]
    From this, we can deduce
    \begin{gather*}
        h(a) = h\biggl(\bigvee_{j = 1}^n \chi_j \land a\biggr)   = \bigvee_{j = 1}^n h(\chi_j) \land h(a)= h(\chi_i) \land h(a)    = h(\chi_i \land a) \\
        = h(\chi_i \land b)= h(\chi_i) \land h(b) = \bigvee_{j = 1}^n h(\chi_j) \land h(b)= h\biggl(\bigvee_{j = 1}^n \chi_j \land b\biggr)= h(b).
    \end{gather*}
    Since the projection $\pi_{i} \colon L_1 \times \dots \times L_n \to L_i$ is surjective (since each $L_j$ is nonempty) and $\mathsf{Ker}(\pi_{i})\subseteq\mathsf{Ker}(h)$, $h$ factors through $\pi_{i}$, as desired.

    \eqref{eq: quotient of product of chain} 
    Let $L=\prod_{i\in I}D_{i}$, and let $h\colon L\twoheadrightarrow M$ be a bounded lattice surjection. Let $(a_i)_{i\in I},(b_i)_{i\in I}\in L$, and let $J \coloneqq \{ i\in I : a_i\leq b_i\}$, and $(e_i)_{i\in I}\in L$ such that $e_j=1$ for $j\in J$ and $e_j=0$ if $j\notin J$. Note that then
    \begin{equation}
        \label{eq:1-equation}
        (a_{i})_{i\in I}\wedge (e_{i})_{i\in I} \leq (b_{i})_{i\in I}
    \end{equation}
    since for $i\in J$ we have $a_{i}\leq b_{i}$, and for $i\notin J$ we have $a_{i}\wedge e_{i}=0$. Moreover,  $(e_i)_{i\in I}$ is complemented, and so $h((e_i)_{i\in I})$ is complemented. Then, $h((e_i)_{i\in I})=1$ or $h((e_i)_{i\in I})=0$. If $h((e_i)_{i\in I})=1$, note that 
    \[
        h((a_i)_{i\in I})
        =
        h((a_i)_{i\in I})\wedge  h((e_i)_{i\in I})=h ((a_i\wedge e_i)_{i\in I})\leq h((b_i)_{i \in I}),
    \]
    where the first equality follows from $h((e_{i})_{i\in I})=1$, and the inequality from \eqref{eq:1-equation}. Therefore, $h((a_i)_{i \in I})\leq h((b_i)_{i \in I})$. Similarly, if  $h((e_i)_{i\in I})=0$, then $h(\lnot (e_i)_{i \in I}) = 1$, and then $h((b_i)_{i\in I})=h((b_i)_{i\in I})\wedge h(\neg (e_i)_{i\in I})=h ((b_i\wedge \neg e_i)_{i\in I})\leq h((a_i)_{i \in I})$. Hence, any two elements of $M$ are comparable, and so $M$ is a chain.

    \eqref{eq: complete factor} It is known that, if $h \colon L \twoheadrightarrow M$ is a surjective lattice homomorphism between lattices and $M$ is countable, then $h$ has a section as posets, i.e., there is an order-preserving function $i \colon M \hookrightarrow L$ such that the composite $M \xrightarrow{i} L \xrightarrow{h} M$ is the identity (see the ``weaving map'' in \cite[p.~98]{Rival1982}, which goes back to \cite[Thm.~3, p.~35]{Dean1961}); it follows that, if moreover $L$ is complete, then $M$ is complete as well.
\end{proof}

\begin{remark}[$\{$étale-finite$\}\not\subseteq \mathbb{H}\mathbb{P}(\{$Boolean$\}\cup\{$complete$\}\cup\{$chains$\})$]\label{rem: etale finite not in other classes}
    We show that there are étale-finite Heyting algebras which lie outside the class of Heyting algebras that are already known to be topos-admissible; in particular,
    we show that the étale-finite Heyting algebra $H$ from \cref{ex: etale-finite Heyting algebras}\eqref{i: Alexandroff}
    \[
        \begin{tikzpicture}[scale=0.6]
        
            \filldraw[color=gray!50, fill=blue!2, thick] (0,0) ellipse (1.5 and 1.75);
            
            \node at (0,0) {$\fincofnaturals$};
            \node at (0,-1.75) {$\bullet$};
            \node at (0,-2.75) {$\bullet$};
            \node at (0.4,-2) {$0$};
            \node at (0,1.75) {$\bullet$};
            \node at (0,2.25) {$1$};
            \node at (0.3,-2.75) {$\bot$};
            
            \draw (0,-2.75) -- (0,-1.75);
            
        \end{tikzpicture}
    \]
    is not in the closure under products and homomorphic images of the class containing all Boolean algebras, all complete Heyting algebras and all chains (including $[0,1] \cap \Q$, of course).
    This will show that our main result (\cref{thm: elementary topos from etale-finite Heyting algebra} below, which produces a topos for each étale-finite Heyting algebra) strictly extends the class of algebras known to be topos-admissible.

    We shall prove that $H$ is not a homomorphic image of a product $B \times C \times P$ of a Boolean algebra $B$, a complete Heyting algebra $C$ and an arbitrary product $P$ of chains.

    Towards a contradiction, suppose that there is a surjective Heyting homomorphism $p \colon B \times C\times P \twoheadrightarrow H$, where $B$ is a Boolean algebra, $C$ is a complete Heyting algebra, and $P$ is a product of chains.

    Notice that $H$ only has $1$ and $0$ as its complemented elements, and $0 \neq 1$. Thus, by \cref{lem: closure properties on quotients of lattices}\eqref{i:quotient-of-finite-product}, $H$ is a lattice homomorphic image of one of $B$, $C$ and $P$.  
    Since the projection preserves all existing algebraic structure, it is in fact a Heyting homomorphic image of one of $B$, $C$ and $P$.
    
    $H$ is not a homomorphic image of $B$ since $H$ is not Boolean. If it were a homomorphic image of $P$, then by \cref{lem: closure properties on quotients of lattices}\eqref{eq: quotient of product of chain} $H$ would be a chain, which is false. Finally, if $H$ were a homomorphic image of $C$, by \cref{lem: closure properties on quotients of lattices}\eqref{eq: complete factor} it would be complete, which is likewise false.
\end{remark}

\begin{remark}[Closure properties of étale-finite Heyting algebras / Esakia spaces] \label{rem:closure-properties}
    \hfill
    \begin{enumerate}
        \item \label{eq: closure properties of etale-finite} 
        It follows from \cref{prop: characterisation of etale maps} that, for a fixed finite Heyting algebra $H_0$, $H_0$-étale Heyting algebras are closed under arbitrary products and homomorphic images, as well as under subalgebras containing the image of $H_0$.
        Dually, a closed upset of an étale-finite Esakia space is an étale-finite Esakia space, and, for $X_{0}$ a fixed finite poset, an arbitrary coproduct (in $\Esa$, which is computed as in $\Pries$) of $X_{0}$-étale Esakia spaces is $X_{0}$-étale.

        \item Given two étale-finite Esakia spaces $X$ and $Y$, their topological and order-theoretic disjoint union $X\sqcup Y$ is étale-finite.
        Indeed, if $\acute{e}\colon X\to X_{0}$ and $\acute{e}'\colon Y\to Y_{0}$ are continuous strict p-morphisms to finite Esakia spaces, then the sum $\acute{e} \sqcup \acute{e}'\colon X\sqcup Y\to X_{0}\sqcup Y_{0}$ is a continuous strict p-morphism. Dually, this shows that the finite product of étale-finite Heyting algebras is étale-finite.
        
        \item 
        Given two étale-finite Esakia spaces $X$ and $Y$, their product $X \times Y$ computed in $\Pries$ is étale-finite.
        Indeed, let $\acute{e}\colon X\to X_{0}$ and $\acute{e}'\colon Y\to Y_{0}$ be continuous strict p-morphisms to finite Esakia spaces.
        First, $X\times Y$ is an Esakia space by \cref{properties.pri}. Second, the map $(\acute{e}\times \acute{e}')\colon X\times Y\to X_{0}\times Y_{0}$ is continuous and order-preserving by general facts. Finally, we note that it is a strict p-morphism: if $(\acute{e}\times \acute{e}')(x,y)\leq (x_{0},y_{0})$ then $\acute{e}(x)\leq x_{0}$, and so there is a unique $x'\geq x$ such that $\acute{e}(x')=x_{0}$, and similarly a unique $y'\geq y$ such that $\acute{e}'(y')=y_{0}$. Then $(x',y')\geq (x,y)$ is the unique pair such that $(\acute{e}\times \acute{e}')(x',y')=(x_{0},y_{0})$.

    \end{enumerate}
    
\end{remark}

In general, an \emph{infinite} product of étale-finite Heyting algebras need \emph{not} be étale-finite.
To see this, we first record the following facts.

\begin{lemma}[Jibladze's law + join-irreducible top $\Rightarrow$ finite]\label{l:join-irreducible}
    Let $H$ be a Heyting algebra whose top element $1$ is finitely join-irreducible.
    Let $S$ be a finite subset of $H$ such that, for every $x \in H$,
    \begin{equation} \label{i:join-is-1}
        \bigvee_{x_0\in S}(x\leftrightarrow x_0)=1.
    \end{equation}
    Then, $S = H$ (and hence $H$ is finite).
\end{lemma}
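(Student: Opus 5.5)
The argument is direct. Fix an arbitrary $x \in H$. By hypothesis \eqref{i:join-is-1}, the finite join $\bigvee_{x_0\in S}(x\leftrightarrow x_0)$ equals $1$. I first note that $S$ is nonempty: otherwise the empty join is $0$, and $0 \neq 1$ because a finitely join-irreducible element is by convention not the empty join. If the paper's convention also allows the degenerate case, then $H$ trivial and $S$ empty is the only exception, and I will remark on it separately.

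Since $1$ is finitely join-irreducible and the join is over a finite nonempty set, some single term equals $1$. That is, there is $x_0 \in S$ with $x \leftrightarrow x_0 = 1$. If needed, I justify this step by induction on $|S|$, using binary join-irreducibility: if $a \vee b = 1$ then $a = 1$ or $b = 1$.

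Next I use the standard Heyting identity: $x \leftrightarrow x_0 = 1$ means $(x \to x_0) \wedge (x_0 \to x) = 1$. Hence both implications equal $1$, so $x \le x_0$ and $x_0 \le x$, and therefore $x = x_0 \in S$.

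Since $x$ was arbitrary, $H \subseteq S$, so $S = H$, and $H$ is finite because $S$ is. There is no real obstacle here. The only point requiring care is the convention on join-irreducibility, which must cover the empty join, that is, it must give $1 \neq 0$ so that $S$ is nonempty.
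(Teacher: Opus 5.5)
Your proposal is correct and takes the same approach as the paper: finite join-irreducibility of $1$ yields a single $x_0\in S$ with $x\leftrightarrow x_0=1$, hence $x=x_0\in S$. Your remark that the empty join needs excluding is correct and is left implicit in the paper. The standard convention for finite join-irreducibility gives $1\neq 0$, which excludes it; otherwise a trivial $H$ with $S=\varnothing$ would be a counterexample.
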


\begin{proof}
    The inclusion $S \subseteq H$ is obvious, so let us prove $H \subseteq S$. 
    Let $x \in H$. Since $1$ is finitely join-irreducible, from \eqref{i:join-is-1} we deduce that there is $x_0 \in S$ such that $x \leftrightarrow x_0 = 1$, i.e., such that $x = x_0$.
    Then, $x = x_0 \in S$.
\end{proof}

\begin{corollary}[\'Etale-finite + join-irreducible top $\Rightarrow$ finite] \label{c:finitely join irreducible}
    Every étale-finite Heyting algebra with a finitely join-irreducible top element is finite.
\end{corollary}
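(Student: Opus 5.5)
The plan is to reduce the corollary directly to \cref{l:join-irreducible}. Let $H$ be an étale-finite Heyting algebra whose top element $1$ is finitely join-irreducible. By definition there are a finite Heyting algebra $H_0$ and a Heyting homomorphism $f\colon H_0\to H$ satisfying Jibladze's law. Following \cref{r:etale-finite-basic}\eqref{i: finite algebra condition}, I will replace $H_0$ by its image and take $S \coloneqq f[H_0]$. This is a finite subset of $H$, since it is the image of a finite set.

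Jibladze's law for $f$ says that, for every $x\in H$,
\[
\bigvee_{x_0\in H_0}\bigl(x\leftrightarrow f(x_0)\bigr)=1 .
\]
Reindexing this join over the image gives $\bigvee_{s\in S}(x\leftrightarrow s)=1$. Repeated terms do not change a finite join, so nothing is lost in the reindexing. This is exactly hypothesis \eqref{i:join-is-1} of \cref{l:join-irreducible} for the finite subset $S$.

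Applying \cref{l:join-irreducible} then yields $S=H$, so $H$ is finite, being the image of the finite algebra $H_0$. I expect no real obstacle. The only points to check are the harmless reindexing of the join and that the lemma requires only a finite subset, not a subalgebra.
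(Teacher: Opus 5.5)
Your proposal is correct and follows the paper's proof: the paper likewise cites \cref{r:etale-finite-basic}\eqref{i: finite algebra condition} to pass to the finite image $f[H_0]$ and then applies \cref{l:join-irreducible}. Your reindexing of the join over the image and your remark that the lemma needs only a finite subset are exactly the details the paper leaves implicit.
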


\begin{proof}
    This follows immediately from \cref{r:etale-finite-basic}\eqref{i: finite algebra condition} and \cref{l:join-irreducible}.
\end{proof}

We can now prove the claim made before \cref{l:join-irreducible}.

\begin{remark}[Infinite product of étale-finite Heyting algebras need not be étale-finite]\label{rem: infinite product of etale may fail to be etale}
    The following provides an example of an \emph{infinite} product of étale-finite Heyting algebras that is not étale-finite.
    For each $n \in \N \smallsetminus \{0\}$, let $[n] \coloneqq 1< \dots < n$ be the $n$-element linearly ordered Heyting algebra, and let $\prod_{n\in \N\smallsetminus\{0\}}[n]$ be the product of these chains. Towards a contradiction, suppose that $H_0$ is a finite Heyting algebra and $f \colon H_{0}\to \prod_{n\in \N\smallsetminus\{0\}}[n]$ a homomorphism satisfying Jibladze's law. Note that, for each $n\in \N \smallsetminus \{0\}$, letting $\pi_{n}\colon \prod_{m \in \N \smallsetminus \{0\}}[m]\to [n]$ denote the projection, for each $x\in [n]$ we have
    \begin{equation*}
         \bigvee_{h\in H_{0}}(x\leftrightarrow \pi_{n}f(h)) = 1. 
    \end{equation*}
    For every $n \geq 2$, the chain $[n]$ has a finitely join-irreducible top element and so, by \cref{l:join-irreducible},
    \[
    n = \lvert [n] \rvert \leq \lvert \pi_{n}f[H_{0}] \rvert \leq \lvert H_{0} \rvert.
    \]
    Since this holds for every $n \geq 2$, we have a contradiction.
    Thus, $\prod_{n\in \N\smallsetminus \{0\}}[n]$ is not étale-finite.
\end{remark}

We provide some non-examples of étale-finite Heyting algebras and Esakia spaces.

\begin{example}[Non-examples of étale-finite Heyting algebras and Esakia spaces]\label{ex: non-example}
    \hfill
    \begin{enumerate}
        \item \label{i:chains} (Infinite bounded chains)
        Every infinite bounded chain (such as $\omega + 1$, $1 + \omega^\op$, or $\omega + \omega^\op$) is not étale-finite; this follows immediately from \cref{c:finitely join irreducible}, since the top element of an infinite bounded chain is finitely join-irreducible.
        \newcommand{\chainvdots}{%
            \mathord{\vcenter{%
            \offinterlineskip
            \hbox to .6em{\hss$\cdot$\hss}\kern0.15ex
            \hbox to .6em{\hss$\cdot$\hss}\kern0.15ex
            \hbox to .6em{\hss$\cdot$\hss}%
          }}%
        }
        \[
        \begin{array}{ccc}
            \begin{tikzcd}[row sep=0.38em, arrows={-}, cells={nodes={inner sep=1.2pt}}]
                \bullet \\
                \chainvdots \arrow[d] \\
                \bullet \arrow[d] \\
                \bullet \arrow[d] \\
                \bullet
            \end{tikzcd}
            &
            \begin{tikzcd}[row sep=0.38em, arrows={-}, cells={nodes={inner sep=1.2pt}}]
                \bullet \arrow[d] \\
                \bullet \arrow[d] \\
                \bullet \arrow[d] \\
                \chainvdots \\
                \bullet
            \end{tikzcd}
            &
            \begin{tikzcd}[row sep=0.38em, arrows={-}, cells={nodes={inner sep=1.2pt}}]
                \bullet \arrow[d] \\
                \bullet \arrow[d] \\
                \bullet \arrow[d] \\
                \chainvdots \\
                \chainvdots \arrow[d] \\
                \bullet \arrow[d] \\
                \bullet \arrow[d] \\
                \bullet
            \end{tikzcd}
            \\[0.8em]
            \omega+1
            &
            1+\omega^\op
            &
            \omega+\omega^\op
        \end{array}
        \]
        Dually, every infinite linearly ordered Esakia space (such as $1 + \omega^\op$, $\omega + 1$, or $\omega + 1 + \omega^\op$) is not étale-finite. 
        \[
        \begin{array}{ccc}
            \begin{tikzcd}[row sep=0.38em, arrows={-}, cells={nodes={inner sep=1.2pt}}]
                \bullet \arrow[d] \\
                \bullet \arrow[d] \\
                \bullet \arrow[d] \\
                \chainvdots \\
                \bullet
            \end{tikzcd}
            &
            \begin{tikzcd}[row sep=0.38em, arrows={-}, cells={nodes={inner sep=1.2pt}}]
                \bullet \\
                \chainvdots \arrow[d] \\
                \bullet \arrow[d] \\
                \bullet \arrow[d] \\
                \bullet
            \end{tikzcd}
            &
            \begin{tikzcd}[row sep=0.38em, arrows={-}, cells={nodes={inner sep=1.2pt}}]
                \bullet \arrow[d] \\
                \bullet \arrow[d] \\
                \bullet \arrow[d] \\
                \chainvdots \\
                \bullet \\
                \chainvdots \arrow[d] \\
                \bullet \arrow[d] \\
                \bullet \arrow[d] \\
                \bullet
            \end{tikzcd}
            \\[0.8em]
            1+\omega^\op
            &
            \omega+1
            &
            \omega+1+\omega^\op
        \end{array}
        \]

        \item (Infinite Heyting algebras with greatest non-top element)\label{i:join-irreducible-top}
        Every infinite Heyting algebra with a greatest non-top element is not étale-finite.
        This follows immediately from \cref{c:finitely join irreducible}.
        So, for every infinite Heyting algebra $H$, the Heyting algebra obtained by adjoining a new top element above the original top element of $H$ is not étale-finite.
        \[
        \begin{tikzpicture}[scale=0.6]
            \filldraw[color=gray!50, fill=blue!2, thick] (0,0) ellipse (1.5 and 1.75);
        
            \node[align=center] at (0,0) {\scriptsize infinite\\[-0.2ex]\scriptsize Heyt.\ alg.};
        
            \node at (0,-1.75) {$\bullet$};
            \node at (0,1.75) {$\bullet$};
            \node at (0,2.75) {$\bullet$};
        
            \draw (0,1.75) -- (0,2.75);
        \end{tikzpicture}
        \]

        It follows that any infinite subdirectly irreducible Heyting algebra is not étale-finite, since every subdirectly irreducible Heyting algebra has a greatest non-top element (\cite[Prop.~A.1.1]{Esakiach2019HeyAlg}).
                    
        \item \label{ex: cofinite subsets} (The space $\alpha\N$ with $\infty$ as top, and its dual algebra $\cofnaturals$)
        The Esakia space $X = \alpha\mathbb{N}$, with the accumulation point $\infty$ as its top element and $\N$ discretely ordered, is not étale-finite. 
        \[
            \begin{tikzpicture}
                \node at (5,-1) {$\bullet$};
                \node at (6,-1) {$\bullet$};
                \node at (7,-1) {$\bullet$};
                \node at (7,0) {$\bullet$};
                \node at (8,-1) {$\bullet$};
                \node at (9,-1) {$\bullet$};
                \node at (10,-1) {$\dots$};
                \node at (7,0.3) {$\infty$};
                
                \draw (7,0) -- (5,-1) -- (7,0) -- (6,-1) -- (7,0) -- (7,-1) -- (7,0) -- (8,-1) -- (7,0) -- (9,-1);
            \end{tikzpicture}
        \]
        Indeed, there is no continuous strict p-morphism to a finite poset, since otherwise $\{\infty\}$ would be the preimage of the set of maximal elements of the finite poset and hence $\{\infty\}$ would be clopen, a contradiction.
        Dually, the Heyting algebra $\cofnaturals$ is not étale-finite.

        \item (Free Heyting algebras over nonempty sets)
        The free Heyting algebra over a nonempty set is not étale-finite.
        To prove this, it suffices to show that the free Heyting algebra $\mathcal{F}_{\HA}(1)$ on one generator is not étale-finite, because, by \cref{rem:closure-properties}\eqref{eq: closure properties of etale-finite}, étale-finite Heyting algebras are closed under surjective homomorphisms. The Heyting algebra $\mathcal{F}_{\HA}(1)$ is the \emph{Rieger--Nishimura lattice}, which is well known to be infinite. By \cref{rem: local finiteness from etale-finiteness}, if $\mathcal{F}_{\HA}(1)$ were étale-finite, it would be locally finite---a contradiction since it is $1$-generated, yet infinite.               
    \end{enumerate}
\end{example}

For the interested reader, in \cref{s:app-char} we characterise étale-finite Esakia spaces as those Esakia spaces where principal upsets are finite and locally constant; see \cref{p:finite-principal-upset-types}.

\section{Spectral local homeomorphisms} \label{sec:speclochomeo}

\begin{definition}[Spectral topology] \label{def:spectral-topology}
    The \emph{spectral topology} on a Priestley space is the topology of all open upsets.
\end{definition}

The clopen upsets of a Priestley space form a basis for its spectral topology \cite[Lem.~2.1]{W1975}. By default, continuity refers to continuity in the Priestley topology.

Recall that a function $f \colon Y \to X$ between topological spaces is said to be a \emph{local homeomorphism} if, for each $y\in Y$, there is an open neighbourhood $A$ of $y$ such that $f[A]$ is open, and the restriction $f{\restriction}_{A}\colon A\to f[A]$ of $f$ is a homeomorphism. It is well known that this is equivalent to being a continuous open map that is \emph{locally injective}, i.e., such that for each $y\in Y$ there is an open neighbourhood $A$ of $y$ on which $f$ is injective.

\begin{definition}[Spectral local homeomorphism]
    A \emph{spectral local homeomorphism} $f\colon Y\to X$ between Priestley spaces is a local homeomorphism with respect to the spectral topologies.
\end{definition}

In \cref{thm.priest.loc} below, we will characterise spectral local homeomorphisms purely in terms of the Priestley orders and topologies.

For convenience, we recall the following well-known fact.

\begin{lemma}[Spectral continuity $\Rightarrow$ order-preservation]\label{monot.spectral}
    A function between Priestley spaces that is continuous with respect to the spectral topologies (in particular, any spectral local homeomorphism) is order-preserving.
\end{lemma}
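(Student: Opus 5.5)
The plan is to go through the \emph{specialisation order} of the spectral topology and to show that, on a Priestley space, it coincides with the Priestley order $\leq$. Once this is established, the statement is immediate: any function that is continuous between topological spaces preserves the specialisation order. The parenthetical clause also follows at once, because a spectral local homeomorphism is by definition continuous with respect to the spectral topologies.

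First I fix conventions. For a topological space, define $x \sqsubseteq y$ if and only if every open set containing $x$ also contains $y$. With this convention, a continuous map $f$ preserves $\sqsubseteq$: if $x\sqsubseteq y$ and $V$ is an open set containing $f(x)$, then $f^{-1}[V]$ is an open set containing $x$. Hence $f^{-1}[V]$ contains $y$, and so $V$ contains $f(y)$.

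Next I show that on a Priestley space $(X,\leq)$ with its spectral topology, $\sqsubseteq$ is exactly $\leq$.
\begin{itemize}
    \item Suppose $x\leq y$. Every open set of the spectral topology is an upset, so any such set containing $x$ also contains $y$. Thus $x\sqsubseteq y$.
    \item Suppose $x\nleq y$. The Priestley separation axiom gives $U\in\ClopUp(X)$ with $x\in U$ and $y\notin U$. This $U$ is an open upset, hence spectrally open, so it witnesses $x\not\sqsubseteq y$.
\end{itemize}

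Finally, let $f\colon Y\to X$ be spectrally continuous and let $y\leq y'$ in $Y$. Then $y\sqsubseteq y'$ in the spectral topology of $Y$. Hence $f(y)\sqsubseteq f(y')$ in the spectral topology of $X$, which means $f(y)\leq f(y')$ by the previous step.

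No step here is a genuine obstacle. The only point needing care is using the separation axiom in the right direction, so that the specialisation order recovers $\leq$ rather than its opposite.
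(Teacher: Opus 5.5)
Your proof is correct and is essentially the paper's argument. The paper argues by contraposition: it takes a clopen upset $U$ with $f(y)\in U$ and $f(y')\notin U$, and observes that $f^{-1}[U]$ is spectrally open, hence an upset, containing $y$ but not $y'$; your specialisation-order packaging does the same thing, and of the identification ${\sqsubseteq}={\leq}$ you only actually need ${\leq}\subseteq{\sqsubseteq}$ in $Y$ and ${\sqsubseteq}\subseteq{\leq}$ in $X$.
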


\begin{proof}
    Let $f \colon Y \to X$ be one such map.
    Let $y,y' \in Y$ be such that $f(y) \nleq f(y')$, and let us prove $y \not\leq y'$.
    By the Priestley separation axiom, there is a clopen upset $U$ such that $f(y)\in U$ and $f(y')\not\in U$. 
    Since $f$ is continuous with respect to the spectral topologies, $f^{-1}[U]$ is an open upset. 
    Combining the facts that $f^{-1}[U]$ is an upset and that $y \in f^{-1}[U]$ and $y' \notin f^{-1}[U]$, we obtain $y \nleq y'$.
\end{proof}

It is likewise known that the clopen upsets in a Priestley space are exactly the compact opens in its spectral topology \cite[Lem.~2.1]{W1975}. Moreover, a function between Priestley spaces is continuous and order-preserving (i.e., a morphism in $\Pries$) if and only if it is a spectral map (i.e., a map $f \colon X \to Y$ such that, for every compact open $U$ in the spectral topology of $Y$, $f^{-1}[U]$ is compact open in the spectral topology of $X$). As an immediate consequence, we obtain the following characterisation of homeomorphisms.

\begin{lemma}[Spectral homeomorphisms $\Leftrightarrow$ Priestley order-homeomorphisms]\label{homeo.spectral}
    The following are equivalent for a function $f \colon Y \to X$ between Priestley spaces.
    \begin{enumerate}
    
        \item\label{homeo1} 
        $f$ is an order-homeomorphism (with respect to the Priestley orders and topologies);
        
        \item\label{homeo2}
        $f$ is a homeomorphism with respect to the spectral topologies.
        
    \end{enumerate}
\end{lemma}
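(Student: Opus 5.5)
The plan is to derive both directions from the two facts recalled just before the statement: a function between Priestley spaces is continuous and order-preserving exactly when it is spectral, and the clopen upsets are exactly the compact open sets of the spectral topology. We also use \cref{monot.spectral}. Throughout, $Y^s$ and $X^s$ denote $Y$ and $X$ with their spectral topologies.

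For \eqref{homeo1}$\Rightarrow$\eqref{homeo2}, let $f$ be an order-homeomorphism. Then $f$ and $f^{-1}$ are both continuous and order-preserving, so both are spectral maps. The clopen upsets form a basis of the spectral topology. Preimages of clopen upsets under a spectral map are clopen upsets, hence open. Therefore $f$ and $f^{-1}$ are continuous for the spectral topologies, and $f$ is a spectral homeomorphism.

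For \eqref{homeo2}$\Rightarrow$\eqref{homeo1}, let $f\colon Y^s\to X^s$ be a homeomorphism. By \cref{monot.spectral}, applied to $f$ and to $f^{-1}$, both maps are order-preserving. Hence $f$ is an order-isomorphism. For continuity in the Priestley topology, note that a homeomorphism preserves compact open sets in both directions. So $f^{-1}[U]$ is compact open in $Y^s$, that is, a clopen upset of $Y$, for every clopen upset $U$ of $X$. This makes $f$ a spectral map, hence continuous and order-preserving by the recalled equivalence. The same argument applies to $f^{-1}$. Thus $f$ is an order-homeomorphism.

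The argument is essentially formal once the recalled facts are granted. The only point needing care is that "spectral map" is defined via compact opens, whereas "homeomorphism" concerns all opens. In the first direction this gap is bridged because clopen upsets form a basis; in the second, because homeomorphisms preserve compactness.
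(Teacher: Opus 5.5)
Your proof is correct and follows the paper's approach. The paper gives no separate argument, calling the lemma an immediate consequence of two recalled facts: clopen upsets are exactly the spectral compact opens, and a map between Priestley spaces is continuous and order-preserving iff it is a spectral map. You unpack exactly this in both directions; your appeal to \cref{monot.spectral} is harmless but redundant, since the recalled equivalence already makes $f$ and $f^{-1}$ order-preserving.
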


Recall from \cref{properties.pri} that a closed set of a Priestley space is again a Priestley space in the induced order and topology.

\begin{lemma}[Spectral topology on closed subspaces] \label{restriction.priest}
    Let $C$ be a closed subspace of a Priestley space $X$. The spectral topology of $C$ coincides with the subspace topology obtained by restricting to $C$ the spectral topology of $X$.
\end{lemma}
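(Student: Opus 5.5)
The plan is to compare bases: the clopen upsets of a Priestley space form a basis for its spectral topology (\cite[Lem.~2.1]{W1975}). So I will show that the clopen upsets of $C$, with the induced order and topology, are exactly the sets of the form $U\cap C$ with $U\in\ClopUp(X)$, together with unions of these. By \cref{properties.pri}\eqref{properties.pri2}, $C$ is a Priestley space, so its spectral topology is generated by $\ClopUp(C)$.

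\emph{Subspace topology is contained in the spectral topology of $C$.} A basic open of the subspace topology has the form $U\cap C$ with $U$ an open upset of $X$. Such a set is open in $C$ in the Priestley topology. It is also an upset for the induced order, since if $c\in U\cap C$, $c\le c'$ and $c'\in C$, then $c'\in U$. Hence it is an open upset of $C$, i.e.\ spectrally open in $C$.

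\emph{Spectral topology of $C$ is contained in the subspace topology.} It suffices to treat a clopen upset $V$ of $C$ and a point $c\in V$. I will find, for each such pair, a clopen upset $U_c$ of $X$ with $c\in U_c$ and $U_c\cap C\subseteq V$; then $V=\bigcup_c (U_c\cap C)$ is open in the subspace topology.

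This step is the main one, and I will do it by a compactness argument. Since $V$ and $C\smallsetminus V$ are closed in $C$, they are closed, hence compact, in $X$. For each $d\in C\smallsetminus V$ we have $c\nleq d$: otherwise $d\in V$, because $V$ is an upset of $C$. Priestley separation in $X$ then gives $U_d\in\ClopUp(X)$ with $c\in U_d$ and $d\notin U_d$. The complements $X\smallsetminus U_d$ are open and cover the compact set $C\smallsetminus V$, so finitely many suffice. Intersecting the corresponding finitely many $U_d$ gives a clopen upset $U_c$ containing $c$ and disjoint from $C\smallsetminus V$. Thus $U_c\cap C\subseteq V$, as required.

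The only delicate points are that the order on $C$ is the restricted one and that compactness of $C\smallsetminus V$ comes from $C$ being closed in the compact space $X$. Both have been handled above.
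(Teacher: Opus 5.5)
Your proof is correct and is essentially the paper's argument: Priestley separation in $X$ for pairs $c\nleq d$ with $c\in V$ and $d\in C\smallsetminus V$, combined with compactness of the closed set $C\smallsetminus V$. The only difference is that you stop after one compactness step and write $V=\bigcup_{c\in V}(U_c\cap C)$. The paper also uses compactness of the clopen upset itself, obtaining a single $W\in\ClopUp(X)$ whose trace on $C$ is exactly that upset, which is slightly more than the statement needs. Your argument also works verbatim for any open upset of $C$, so the appeal to the clopen-upset basis of $C$ is not even necessary.
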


\begin{proof}
    Clearly, the restriction to $C$ of a spectral open of $X$ is a spectral open of $C$. Let us show the reverse inclusion. Let $U$ be a clopen upset of $C$. For each $x\in U$ and $y\in C\smallsetminus U$, we have $x\not\leq y$ and, by the Priestley separation property in $X$, there is a clopen upset $U_{xy}$ of $X$ such that $x\in U_{xy}$ and $y\not\in U_{xy}$. Then, for each $y\in C\smallsetminus U$, we have $U\subseteq \bigcup_{x\in U}U_{xy}$; by compactness of $U$, there is a finite subcover $U\subseteq \bigcup_{i=1}^{n_y}U_{x_i y}$. We set $V_y\coloneqq  \bigcup_{i=1}^{n_y}U_{x_i y}$; then  $C\smallsetminus U \subseteq\bigcup_{y\in C\smallsetminus U} (C\smallsetminus V_y)$. Since each $C\smallsetminus V_y$ is open in $C$, by compactness there is a finite cover $C\smallsetminus U \subseteq\bigcup_{j=1}^m (C\smallsetminus V_{y_{j}})$. Let $W\coloneqq \bigcap_{j=1}^{m} V_{y_{j}}$. Then $W$ is a clopen upset of $X$ such that $W\cap C= U$. 
\end{proof}

\begin{lemma}[Local injectivity near a closed set]\label{lem:local-inj-neighbourhood-closed-set}
    Let $f \colon Y \to X$ be a local homeomorphism, where $Y$ is compact Hausdorff and
    $X$ is Hausdorff. Let $K \subseteq Y$ be closed. If $f{\restriction}_{K}$ is injective,
    then there is an open neighbourhood $U\supseteq K$ such that
    $f{\restriction}_{U}$ is injective.
\end{lemma}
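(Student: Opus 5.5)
The plan is a standard compactness argument on the set of pairs where injectivity fails. Consider the closed subset
\[
R \coloneqq \{(y,y') \in Y\times Y : f(y)=f(y')\},
\]
which is closed because $X$ is Hausdorff and $f$ is continuous; $R$ is thus compact. The diagonal $\Delta_Y$ is contained in $R$. First I will show that $\Delta_Y$ is \emph{open} in $R$. Let $y\in Y$, and choose an open $A\ni y$ on which $f$ is injective, which exists since local homeomorphisms are locally injective. Then $(A\times A)\cap R\subseteq \Delta_Y$, so $\Delta_Y$ is open in $R$. Consequently $R\smallsetminus\Delta_Y$ is closed in $R$, hence compact.

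Next I will use the hypothesis on $K$. Injectivity of $f{\restriction}_K$ says exactly that $(K\times K)\cap (R\smallsetminus \Delta_Y)=\varnothing$. Thus $K\times K$ and the compact set $D\coloneqq R\smallsetminus\Delta_Y$ are disjoint, and $D$ is closed in $Y\times Y$. For each point $(a,b)\in K\times K$, there is a basic open box $U_a\times U_b$ around it disjoint from $D$, since $D$ is closed. By the tube lemma, or by compactness of $K\times K$ (a product of compact sets), finitely many such boxes cover $K\times K$. To pass from a union of boxes to a single square $U\times U$, I will apply the generalised tube lemma in the form: for compact $K\times K$ contained in the open set $O\coloneqq (Y\times Y)\smallsetminus D$, there are opens $V,W\supseteq K$ with $V\times W\subseteq O$. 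Then take $U\coloneqq V\cap W$.

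Finally, I check that $f{\restriction}_U$ is injective. If $u,u'\in U$ with $f(u)=f(u')$ and $u\neq u'$, then $(u,u')\in D\cap(U\times U)=\varnothing$, a contradiction. The only mildly delicate points are the openness of $\Delta_Y$ in $R$, which uses local injectivity, and the generalised tube lemma, which is standard. Compact Hausdorffness of $Y$ is used to make $R$ compact; strictly, only closedness of $D$ is needed, so I expect no real obstacle.
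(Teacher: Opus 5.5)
Your proof is correct and is essentially the paper's argument: the paper also shows that $R\smallsetminus\Delta_Y$ is closed in $Y\times Y$, because local injectivity makes $\Delta_Y$ open in $R$, and then applies Wallace's theorem to $K\times K\subseteq (Y\times Y)\smallsetminus(R\smallsetminus\Delta_Y)$ and intersects the two resulting neighbourhoods of $K$. One small correction to your closing remark: compactness of $Y$ is genuinely needed to make $K$ (closed in $Y$) compact, so that the tube lemma applies, rather than to make $R$ compact.
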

\begin{proof}
    Set
    \[
    R \coloneqq \{(y_1,y_2)\in Y\times Y : f(y_1)=f(y_2)\}
          = (f\times f)^{-1}(\Delta_X).
    \]
    Since $X$ is Hausdorff, $\Delta_X$ is closed, and hence $R$ is closed in $Y\times Y$. Let
    \[
    E \coloneqq R \smallsetminus \Delta_Y
      = \{(y_1,y_2)\in Y\times Y : f(y_1)=f(y_2),\ y_1\neq y_2\}.
    \]
    Because $f$ is a local homeomorphism, it is locally injective. Hence $\Delta_Y$ is open in
    $R$: for every $y\in Y$ there is an open neighbourhood $W_y$ of $y$ such that
    \[
    R \cap (W_y\times W_y)=\Delta_Y \cap (W_y\times W_y).
    \]
    Therefore, $E=R\smallsetminus\Delta_Y$ is closed in $R$, and hence closed in $Y\times Y$.
    
    Since $f{\restriction}_K$ is injective, we have
    \[
    E \cap (K\times K)=\varnothing.
    \]
    
    By Wallace's theorem, also known as the ``Tube Lemma'' \cite[Lem.~26.8]{munkres2000topology} (see also \cite[\href{https://stacks.math.columbia.edu/tag/005N}{Tag 005N}]{stacks-project}), we have open neighbourhoods $U_1$ and $U_2$ of $K$ such that $U_1\times U_2\subseteq (Y\times Y)\smallsetminus E$; thus, by taking $U\coloneqq U_{1}\cap U_{2}$, we obtain an open neighbourhood $U$ of $K$ such that  $(U\times U)\cap E=\varnothing$.
    
    Now if $u,v\in U$ and $f(u)=f(v)$, then $(u,v)\in R$. Since $(u,v)\notin E$, it follows
    that $u=v$. Thus $f{\restriction}_U$ is injective.
\end{proof}

The following characterises spectral local homeomorphisms purely in terms of Priestley topology and order.

\begin{theorem}[Characterisations of spectral local homeomorphisms] \label{thm.priest.loc}
    Let $f \colon Y \to X$ be a function between Priestley spaces. The following are equivalent:
    \begin{enumerate}
    
        \item \label{i:spectral-local}
        $f$ is a spectral local homeomorphism; 
    
        \item \label{i:in-terms-of-Priestley}
        for every $y\in Y$, there is a clopen upset $V\subseteq Y$ with $y\in V$ such that $f[V]$ is a clopen upset of $X$ and 
        \[
        f{\restriction}_V \colon V \longrightarrow f[V]
        \]
        is an order-homeomorphism (with respect to the Priestley order and topology);
        
        \item \label{i: Local homeomorphism decomposition} there are $n \in \N$ and clopen upsets $U_{1},\dots,U_{n}$ of $Y$ such that $Y=U_{1}\cup\dots\cup U_{n}$ and, for each $i \in \{1, \dots, n\}$, $f[U_i]$ is a clopen upset and $f {\restriction}_{U_i} \colon U_i \to f[U_i]$ is an order-homeomorphism;
        
        \item \label{i:nhb-spectral-lh-versus-strict-pm2} 
        $f$ is a strict p-morphism with respect to the Priestley orders and a local homeomorphism with respect to the Priestley topologies.
    \end{enumerate}
\end{theorem}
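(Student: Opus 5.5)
I will prove the cycle (1)$\Rightarrow$(2)$\Rightarrow$(3)$\Rightarrow$(1), and separately (2)$\Leftrightarrow$(4).

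For (1)$\Rightarrow$(2): given $y$, take a spectral open $A\ni y$ with $f[A]$ spectral open and $f{\restriction}_A$ a spectral homeomorphism. Since clopen upsets form a basis of the spectral topology, shrink to a clopen upset $V$ with $y\in V\subseteq A$. Then $V$ is compact open in the spectral topology of $A$, hence $f[V]$ is compact open in the spectral topology, i.e.\ a clopen upset of $X$ (using \cite[Lem.~2.1]{W1975}). By \cref{restriction.priest}, the spectral topologies of the closed subspaces $V$ and $f[V]$ are the restricted ones. So $f{\restriction}_V\colon V\to f[V]$ is a spectral homeomorphism between Priestley spaces, hence an order-homeomorphism by \cref{homeo.spectral}.

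For (2)$\Rightarrow$(3): the clopen upsets from (2) cover $Y$, and they are open in the Priestley topology, so compactness gives a finite subcover. For (3)$\Rightarrow$(1): each $U_i$ is spectrally open. Each $f{\restriction}_{U_i}$ is a spectral homeomorphism onto the spectral open $f[U_i]$ by \cref{homeo.spectral} and \cref{restriction.priest}. Hence $f$ is a spectral local homeomorphism.

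For (2)$\Rightarrow$(4):
\begin{itemize}
\item The sets $V$ are Priestley-open and $f[V]$ is Priestley-open, with $f{\restriction}_V$ a homeomorphism; so $f$ is a Priestley local homeomorphism.
\item $f$ is order-preserving by \cref{monot.spectral}.
\item For strictness, let $f(y)\le x$ and take $V\ni y$ as in (2). Since $f[V]$ is an upset, $x\in f[V]$, so there is $y'\in V$ with $f(y')=x$. As $f{\restriction}_V$ is an order-embedding, $y\le y'$, which gives existence. For uniqueness, if $y''\ge y$ with $f(y'')=x$, then $y''\in V$ because $V$ is an upset, and injectivity on $V$ gives $y''=y'$.
\end{itemize}

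The main obstacle is (4)$\Rightarrow$(2). Given $y$, the plan is to find a clopen upset neighbourhood on which $f$ is injective.
\begin{itemize}
\item Set $K={\u}y$, which is closed in a Priestley space. By \cref{rem: external characterisation of etale maps}, $f$ restricted to $K$ is an order-isomorphism onto ${\u}f(y)$, so in particular injective.
\item By \cref{lem:local-inj-neighbourhood-closed-set}, there is an open $U\supseteq K$ on which $f$ is injective.
\item I would then use the standard Priestley fact that a closed upset contained in an open set is contained in a clopen upset inside it. This follows by compactness from Priestley separation: for each $z\notin U$, separate $K$ from $z$ by a clopen upset, then intersect finitely many. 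Call the result $V$, a clopen upset with $y\in V\subseteq U$ and $f{\restriction}_V$ injective.
\item $f[V]$ is an upset: if $f(v)\le x$, strictness gives $v'\ge v$ with $f(v')=x$, and $v'\in V$.
\item $f$ is open and $V$ is compact, so $f[V]$ is clopen. Then $f{\restriction}_V$ is a continuous bijection from a compact space onto a Hausdorff space, hence a homeomorphism.
\item It remains to check that $f{\restriction}_V$ reflects the order. If $f(v)\le f(w)$ with $v,w\in V$, strictness gives $v'\ge v$ with $f(v')=f(w)$. Since $v'\in V$, injectivity gives $v'=w$, so $v\le w$.
\end{itemize}
The delicate point is ensuring the neighbourhood is simultaneously an upset and injective; that is exactly why I use ${\u}y$ rather than $\{y\}$ as the closed set $K$.
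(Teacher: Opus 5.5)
Your proposal is correct and follows essentially the same route as the paper. The easy implications match, and your key step is the paper's: take the closed set ${\u}y$, apply \cref{lem:local-inj-neighbourhood-closed-set} to get an injective neighbourhood, shrink it to an upset neighbourhood, then use openness of $f$ and strictness for order-reflection. The differences are only organisational. You shrink to a \emph{clopen} upset, so you land directly in (2) via the compact-to-Hausdorff argument and then need the direct (3)$\Rightarrow$(1). The paper instead takes an open upset neighbourhood from Davey--Priestley and concludes (1) straight from (4).
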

\begin{proof} 
    \eqref{i:spectral-local}$\Rightarrow$\eqref{i:in-terms-of-Priestley}
    Let $y\in Y$. By \eqref{i:spectral-local}, there is an open upset $A\subseteq Y$ with $y\in A$ such that $f[A]$ is an open upset in $X$ and $ f{\restriction}_A \colon A \to f[A] $ is a homeomorphism with respect to the spectral topologies. Since clopen upsets form a basis for the spectral topology, there is a clopen upset $V$ with $y\in V\subseteq A$. Then, $V$ is compact and open in the spectral topology on $Y$, and so $f[V]$ is compact by continuity of $f$ with respect to the spectral topologies; since $V$ is open in the subspace $A$, the set $f[V]$ is open in $f[A]$, and hence spectral open in $X$. Thus $f[V]$ is compact and open in the spectral topology of $X$, and hence a clopen upset. Moreover, the restriction $ f{\restriction}_V \colon V \to f[V] $ is a homeomorphism with respect to the spectral topologies. Now, since $V$ and $f[V]$ are clopen, they are closed Priestley subspaces, and so \eqref{i:in-terms-of-Priestley} follows easily from  \cref{homeo.spectral,restriction.priest}.

    \eqref{i:in-terms-of-Priestley}$\Rightarrow$\eqref{i: Local homeomorphism decomposition} By \eqref{i:in-terms-of-Priestley}, for each $y\in Y$ there is a clopen upset $U_{y}$ containing $y$ such that $f[U_y]$ is a clopen upset and $f$ restricts to an order-homeomorphism on $U_{y}$. Then, $Y = \bigcup_{y\in Y}U_{y}$. By compactness, we get a finite cover, which shows \eqref{i: Local homeomorphism decomposition}. 

    \eqref{i: Local homeomorphism decomposition}$\Rightarrow$\eqref{i:nhb-spectral-lh-versus-strict-pm2} The fact that $f$ is a local homeomorphism with respect to the Priestley topologies is immediate. Let us prove that $f$ is a strict p-morphism.
    If $y\in U_i$ and $f(y)\le x$, then $x\in f[U_i]$ because $f[U_i]$ is an upset; the unique preimage of $x$ under $f{\restriction}_{U_i}$ is the candidate $y'\ge y$; the uniqueness follows from the fact that $U_i$ is an upset, and so every $z\ge y$ is still in $U_i$.

    \eqref{i:nhb-spectral-lh-versus-strict-pm2}$\Rightarrow$\eqref{i:spectral-local} Fix $y\in Y$. Since $Y$ is a Priestley space, the principal upset ${\u} y$ is closed in $Y$ \cite[Prop.~2.6]{Priestley1984}. Since $f$ is a strict p-morphism, by \cref{rem: external characterisation of etale maps}, the restriction
    \[
    f{\restriction}_{{\u} y}\colon {\u} y \longrightarrow {\u} f(y)
    \]
    is an order-isomorphism; in particular, it is injective. By
    \cref{lem:local-inj-neighbourhood-closed-set}, there is an open neighbourhood $U$ of
    ${\u}y$ such that $f{\restriction}_{U}$ is injective. 
    
    Using a standard fact for Priestley spaces \cite[Lem.~11.21]{Davey2002-lr}, choose an open upward-closed neighbourhood $V$ of
    $y$ such that
    \[
    {\u} y \subseteq V \subseteq U.
    \]
    Then $f{\restriction}_V$ is still injective. Since $f$ is a local homeomorphism for the Priestley topologies, it is an open map. Hence, $f[V]$ is open in $X$, and
    \[
    f{\restriction}_{V} \colon V \longrightarrow f[V]
    \]
    is a homeomorphism.
    
    Since $V$ is upwards closed and $f$ is a p-morphism, the set $f[V]$ is an upset (see \cref{rem: remark on p-morphisms}). We show that $f$ moreover reflects the order. Let $v,w\in V$, and assume that $f(v)\leq f(w)$. Since $f(w)\in {\u} f(v)$ and
    \[
    f{\restriction}_{{\u}v}\colon {\u} v \longrightarrow {\u} f(v)
    \]
    is surjective, there is $u\in {\u} v$ such that $f(u)=f(w)$. As $V$ is upwards closed and
    $v\in V$, we get $u\in V$. Since $f{\restriction}_{V}$ is injective, it follows that $u=w$,
    and hence $v\leq w$.
    
    Thus, for all $v,w\in V$,
    \[
    v\leq w \quad\Longleftrightarrow\quad f(v)\leq f(w),
    \]
    and so $f{\restriction}_V$ is an order-homeomorphism onto the open upset $f[V]$.
    Since $y\in Y$ was arbitrary, $f$ is a spectral local homeomorphism.
\end{proof}

\begin{corollary}[Uniform finite fibres over principal upsets]\label{cor:finiteness}
    Let $f \colon Y\to X$ be a spectral local homeomorphism between Priestley spaces.
    If $X$ is an étale-finite Esakia space, then there is $m \in \N$ such that, for each $x\in X$, $f^{-1}[{\u}x]$ is a finite set of cardinality at most $m$.
\end{corollary}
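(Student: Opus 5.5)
We combine the finite decomposition of \cref{thm.priest.loc}\eqref{i: Local homeomorphism decomposition} with the strict p-morphism $\acute{e}\colon X\to X_0$ witnessing étale-finiteness. First we bound the principal upsets of $X$. For $x\in X$, \cref{rem: external characterisation of etale maps} says $\acute{e}$ restricts to an order-isomorphism ${\u}x\cong{\u}\acute{e}(x)\subseteq X_0$. Hence
\[
\lvert{\u}x\rvert\leq \lvert X_0\rvert .
\]
This bound does not depend on $x$.

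Next, by \cref{thm.priest.loc}\eqref{i: Local homeomorphism decomposition}, choose clopen upsets $U_1,\dots,U_n$ covering $Y$ such that each $f{\restriction}_{U_i}\colon U_i\to f[U_i]$ is an order-homeomorphism, in particular injective. Then
\[
f^{-1}[{\u}x]=\bigcup_{i=1}^n \bigl(U_i\cap f^{-1}[{\u}x]\bigr).
\]
Since $f{\restriction}_{U_i}$ is injective, the $i$-th piece maps injectively into ${\u}x$, so it has at most $\lvert{\u}x\rvert\leq\lvert X_0\rvert$ elements. It follows that $\lvert f^{-1}[{\u}x]\rvert\leq n\cdot\lvert X_0\rvert$, and we take $m\coloneqq n\lvert X_0\rvert$.

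The argument is short; the one step that needs care is that $n$ is uniform. This holds because the cover in \eqref{i: Local homeomorphism decomposition} is finite, which comes from compactness of $Y$ and is already built into \cref{thm.priest.loc}. Note also that we do not need $f$ to be strict here: injectivity on each $U_i$ suffices.
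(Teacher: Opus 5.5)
Your proof is correct and is essentially the paper's argument: the paper likewise takes the finite cover $U_1,\dots,U_n$ from the implication \eqref{i:spectral-local}$\Rightarrow$\eqref{i: Local homeomorphism decomposition} in \cref{thm.priest.loc} and bounds $\lvert f^{-1}[{\u}x]\rvert$ by $n\lvert X_0\rvert$. You simply spell out the two steps the paper leaves implicit: $\lvert{\u}x\rvert\leq\lvert X_0\rvert$ via strictness of $\acute{e}$, and injectivity of $f$ on each $U_i$.
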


\begin{proof}
    Choose a continuous strict p-morphism $\acute e\colon X\to X_0$ with $X_0$ finite.
    By the implication \eqref{i:spectral-local}$\Rightarrow$\eqref{i: Local homeomorphism decomposition} in \cref{thm.priest.loc}, there are $n \in \N$ and clopen upsets $U_{1},\dots,U_{n}$ of $Y$ such that $Y=U_{1}\cup\dots\cup U_{n}$ and, for each $i \in \{1, \dots, n\}$, $f[U_i]$ is a clopen upset and $f {\restriction}_{U_i} \colon U_i \to f[U_i]$ is an order-homeomorphism.
    Then, for each $x \in X$, the set $f^{-1}[{\u}x]$ has at most $n \lvert X_{0} \rvert$ elements.
\end{proof}

The following proposition shows that the existence of a spectral local homeomorphism to an Esakia space forces the domain to be Esakia.

\begin{proposition}[\'Etale over Esakia is Esakia]\label{prop: closure of Esakia under spectral}
    Let $f\colon Y\to X$ be a spectral local homeomorphism, where $Y$ is a Priestley space and $X$ is an Esakia space. Then $Y$ is also an Esakia space.
\end{proposition}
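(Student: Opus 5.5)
We use the decomposition from \cref{thm.priest.loc}\eqref{i: Local homeomorphism decomposition}: there are clopen upsets $U_1,\dots,U_n$ of $Y$ covering $Y$ such that each $f[U_i]$ is a clopen upset of $X$ and each restriction $f{\restriction}_{U_i}\colon U_i\to f[U_i]$ is an order-homeomorphism. We must show that ${\d}W$ is clopen for every clopen $W\subseteq Y$. Since ${\d}$ preserves unions and $W=\bigcup_i (W\cap U_i)$ with each $W\cap U_i$ clopen, it suffices to treat a clopen $W$ contained in a single $U_i$.

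So fix $i$ and a clopen $W\subseteq U_i$. First note that $f[U_i]$ is a clopen upset of the Esakia space $X$, hence an Esakia space by \cref{properties.pri}\eqref{properties.pri2}. Since $f{\restriction}_{U_i}$ is an order-homeomorphism, $U_i$ is itself an Esakia space, so ${\d}W\cap U_i$ is clopen in $U_i$ and hence in $Y$. The difficulty is the part of ${\d}W$ lying outside $U_i$; here I would use $f$ itself rather than the chart. The key claim is
\[
{\d}W = f^{-1}\bigl[{\d}f[W]\bigr]\cap \bigcup\nolimits_{j}\bigl(U_j\cap \ldots\bigr),
\]
which needs care. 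A cleaner route is to work chart by chart: for each $j$, compute ${\d}W\cap U_j$. For $y\in U_j$, the upset ${\u}y$ lies in $U_j$, and by \cref{rem: external characterisation of etale maps} $f$ maps ${\u}y$ isomorphically onto ${\u}f(y)$. Hence $y\in{\d}W$ if and only if some $z\geq y$ lies in $W\cap U_j$.

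Since $W\cap U_j$ is clopen in $U_j$, and $U_j$ is Esakia by the same argument as for $U_i$, the down-closure of $W\cap U_j$ computed inside the upset $U_j$, namely ${\d}(W\cap U_j)\cap U_j$, is clopen. Therefore
\[
{\d}W=\bigcup_{j=1}^n \bigl({\d}(W\cap U_j)\cap U_j\bigr),
\]
which is a finite union of clopens, and hence clopen.

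So the real content is simply that each $U_j$ is order-homeomorphic to a clopen upset of $X$, which is Esakia, and that the $U_j$ are upsets covering $Y$, so that ${\d}W\cap U_j$ depends only on $W\cap U_j$. The one point to double-check is that identity: if $y\in U_j$ and $y\le z\in W$, then $z\in U_j$ because $U_j$ is an upset, so $z\in W\cap U_j$. This is exactly what makes the argument work, and with it I expect no real obstacle. The reduction to a single $U_i$ in the first paragraph is then unnecessary, and I would present the proof directly in this chart-by-chart form.
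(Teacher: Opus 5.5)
Your final chart-by-chart argument is correct, but it takes a different route from the paper. The paper also starts from the decomposition $Y=U_1\cup\dots\cup U_n$ and notes that each $U_i$ is Esakia. It then forms the coproduct $U_1\sqcup\dots\sqcup U_n$, observes that the canonical map onto $Y$ is a surjective continuous p-morphism, and concludes by closure of Esakia spaces under p-morphic images (\cref{properties.pri}\eqref{properties.pri3}).

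You instead verify the Esakia condition directly, via the identity ${\d}W=\bigcup_{j}\bigl({\d}(W\cap U_j)\cap U_j\bigr)$. Each term is the down-closure of $W\cap U_j$ computed inside the Esakia space $U_j$. It is therefore clopen in $U_j$, and hence clopen in $Y$ because $U_j$ is clopen. This is essentially the usual proof of the p-morphic-image closure property unwound for this particular covering map. There one must argue that the image of ${\d}q^{-1}[W]$ and of its complement are closed, using surjectivity and compactness. In your setting the clopenness of each piece in $Y$ is immediate. So your version is fully self-contained and needs no cited closure result, while the paper's is shorter and more conceptual ($Y$ is a p-morphic image of a finite coproduct of Esakia spaces).

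When you write it up, drop the abandoned first attempt, including the incomplete display with ``$\ldots$''. Also drop the appeal to \cref{rem: external characterisation of etale maps}. For $y\in U_j$, the equivalence ``$y\in{\d}W$ iff some $z\geq y$ lies in $W\cap U_j$'' follows solely from $U_j$ being an upset, as you note at the end. The strictness of $f$ plays no role at that step.
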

\begin{proof}
    By \cref{thm.priest.loc}\eqref{i: Local homeomorphism decomposition}, there are $n \in \N$ and clopen upsets $U_{1},\dots,U_{n}$ of $Y$ such that $Y=U_{1}\cup\dots\cup U_{n}$ and, for each $i \in \{1, \dots, n\}$, $f[U_i]$ is a clopen upset and $f {\restriction}_{U_i} \colon U_i \to f[U_i]$ is an order-homeomorphism.
    Since a closed upset of an Esakia space is an Esakia space (\cref{properties.pri}\eqref{properties.pri2}), $f[U_{i}]$ is an Esakia space; therefore, $U_{i}$ is also an Esakia space. Let $U_{1}\sqcup \dots \sqcup U_{n}$ be the coproduct of the $U_{i}$, whose elements are of the form $(x,i)$ for $x\in U_{i}$. We thus have a natural map
    \begin{equation*}
        q\colon U_{1}\sqcup \dots \sqcup U_{n} \twoheadrightarrow Y,
    \end{equation*}
    which sends $(x,i)$ to $x$. For each clopen $W\subseteq Y$, we have $q^{-1}[W]=\{(x,i) : x\in W\}=\bigsqcup_{i=1}^{n}(W\cap U_{i})$, which is clopen; thus $q$ is continuous. Moreover, $q$ is obviously surjective and a p-morphism. Thus $Y$ is an Esakia space by \cref{properties.pri}\eqref{properties.pri3}.
\end{proof}

To conclude this section, recall that the category $\TopLH$ of topological spaces and local homeomorphisms has an interesting property with respect to the ambient category $\Top$ of topological spaces and continuous functions (see e.g.\ \cite[Lem.~6.4.5]{janelidzeborceaux}):

\begin{definition}[Triangle property]
    Let $\cat{C}$ be a category.
    We say that a (not necessarily full) subcategory $\cat{L}$ of $\cat{C}$ has the \emph{triangle property} if, for all  $\cat{C}$-morphisms $Z \xrightarrow{f} Y \xrightarrow{g} X$ between objects of $\cat{L}$, if $g$ and $g f$ are $\cat{L}$-morphisms then $f$ is an $\cat{L}$-morphism.
    \[
        \begin{tikzcd}
            Z \arrow[rd, "gf"'] \arrow[rr, "f", dashed] &   & Y \arrow[ld, "g"] \\
            & X &                  
        \end{tikzcd}
    \]

\end{definition}

By the implication \eqref{i:spectral-local}$\Rightarrow$\eqref{i:nhb-spectral-lh-versus-strict-pm2} in \cref{thm.priest.loc}, every spectral local homeomorphism between Priestley spaces is continuous and order-preserving; hence, we may regard the category of Priestley spaces and spectral local homeomorphisms between them as a subcategory of $\Pries$. We now have:

\begin{proposition}[Triangle property for spectral local homeomorphisms] \label{prop: Triangle property for spectral local homeomorphisms}
    The category of Priestley spaces and spectral local homeomorphisms has the triangle property in $\Pries$.
\end{proposition}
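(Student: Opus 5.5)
We use the characterisation in \cref{thm.priest.loc}\eqref{i:nhb-spectral-lh-versus-strict-pm2}: a map between Priestley spaces is a spectral local homeomorphism exactly when it is a strict p-morphism for the orders and a local homeomorphism for the Priestley topologies. So let $Z \xrightarrow{f} Y \xrightarrow{g} X$ be continuous order-preserving maps between Priestley spaces, with $g$ and $gf$ spectral local homeomorphisms. We check the two conditions for $f$ separately.

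\emph{Strictness.} Let $z\in Z$ and $y\in Y$ with $f(z)\le y$. Then $gf(z)\le g(y)$. Since $gf$ is strict, there is a unique $z'\ge z$ with $gf(z')=g(y)$. Now $f(z')$ and $y$ are both $\ge f(z)$ and both map under $g$ to $g(y)$. By strictness of $g$ at the point $f(z)$, we get $f(z')=y$, so $z'$ exists. For uniqueness, suppose $z''\ge z$ also satisfies $f(z'')=y$. Then $gf(z'')=g(y)$, and uniqueness for $gf$ gives $z''=z'$. Hence $f$ is a strict p-morphism. Equivalently, by \cref{rem: external characterisation of etale maps}, on each principal upset ${\u}z$ we have $f = (g{\restriction}_{{\u}f(z)})^{-1}\circ (gf){\restriction}_{{\u}z}$, a composite of order-isomorphisms.

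\emph{Local homeomorphism in the Priestley topologies.} This is the classical triangle property for $\TopLH$ in $\Top$ \cite[Lem.~6.4.5]{janelidzeborceaux}, applied to the Priestley topologies, since $f$ is continuous. For completeness, the direct argument runs as follows. Fix $z\in Z$. Choose an open $A\ni f(z)$ on which $g$ is injective, and an open $B\ni z$ with $B\subseteq f^{-1}[A]$ on which $gf$ is injective and open. Then $f$ is injective on $B$, since $f(b)=f(b')$ implies $gf(b)=gf(b')$. It remains to see that $f$ is open on $B$. For open $B'\subseteq B$ we have $f[B'] = (g{\restriction}_A)^{-1}[gf[B']]\cap A$, because $g$ is injective on $A$ and $f[B']\subseteq A$. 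This set is open, since $g$ is continuous and $gf[B']$ is open.

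We do not expect a serious obstacle. The only delicate point is keeping the two topologies apart, which is handled by routing everything through \cref{thm.priest.loc}. Alternatively, one could apply the topological triangle property directly to the spectral topologies, using that $f$ is spectrally continuous because it is a morphism of $\Pries$ and hence a spectral map.
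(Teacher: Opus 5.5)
Your argument is correct, but your main route is not the paper's. The paper's proof is exactly the alternative you sketch in your last sentence: $f$ is continuous for the spectral topologies (the preimage of an open upset under a continuous order-preserving map is an open upset), so the triangle property of $\TopLH$ in $\Top$, applied to the spectral topologies, gives at once that $f$ is a spectral local homeomorphism.

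Your main route instead goes through the characterisation \eqref{i:nhb-spectral-lh-versus-strict-pm2} of \cref{thm.priest.loc}. You translate the hypotheses on $g$ and $gf$ into strictness plus local homeomorphism for the Priestley topologies. You then prove strictness of $f$ by a direct order argument, get the Priestley local homeomorphism property from the topological triangle property, and translate back. Both steps are sound, and \cref{thm.priest.loc} precedes and does not use the proposition, so there is no circularity.

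What your route buys is transparency about the order: it shows explicitly why strictness is inherited, namely $f{\restriction}_{{\u}z}=(g{\restriction}_{{\u}f(z)})^{-1}\circ(gf){\restriction}_{{\u}z}$. What it costs is that it relies on both directions of \cref{thm.priest.loc}. This includes the implication \eqref{i:nhb-spectral-lh-versus-strict-pm2}$\Rightarrow$\eqref{i:spectral-local}, whose proof needs compactness through \cref{lem:local-inj-neighbourhood-closed-set}. The paper's argument avoids all of this, because in the spectral topologies the statement is literally an instance of the classical triangle property.
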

\begin{proof}
    Let $f \colon Z \to Y$ and $g \colon Y \to X$ be continuous order-preserving maps between Priestley spaces, and suppose that $g$ and $gf$ are spectral local homeomorphisms.
    The function $f$ is continuous in the spectral topologies because it is continuous and order-preserving.
    Since local homeomorphisms satisfy the triangle property in the category of topological spaces and continuous maps (see e.g.\ \cite[Lem.~6.4.5]{janelidzeborceaux}), $f$ is a spectral local homeomorphism.
\end{proof}

\section{A topos for étale-finite Heyting algebras} \label{sec:topos}

\subsection{Esakia étale spaces}

Let $\Esa_{\SLH}$ be the category of Esakia spaces with spectral local homeomorphisms between them. 

\begin{definition}[Esakia étale space]
    Let $X$ be an Esakia space. We denote by $\EsaEt(X)$ the slice category $\Esa_{\SLH}/X$, which we refer to as the category of \emph{Esakia étale spaces over $X$}.
\end{definition}

Concretely, 
\begin{enumerate}
    \item an object of $\EsaEt(X)$ is a spectral local homeomorphism
    \[
    Y \longrightarrow X,
    \]
    where $Y$ is an Esakia space;

    \item a morphism from $Y_1 \to X$ to $Y_2 \to X$ is a spectral local homeomorphism $Y_1 \to Y_2$ such that the following diagram commutes.
    \[
    \begin{tikzcd}
        Y_1 \arrow{rr} \arrow{rd}& & Y_2 \arrow{ld}\\
        & X
    \end{tikzcd}
    \]
\end{enumerate}

Our goal is to prove that, for every étale-finite Esakia space $X$, 
\[
\EsaEt(X)
\]
is a topos whose lattice of truth values is isomorphic to $\ClopUp(X)$. 

First, we will prove that, for every Esakia space $X$, the category $\EsaEt(X)$ is finitely complete; then, we will show that, if $X$ is \emph{étale-finite}, then $\EsaEt(X)$ is a topos.
In the next section, we will also prove the converse: if $\EsaEt(X)$ is a topos, then the Esakia space $X$ is étale-finite (\cref{theorem.fp.etale}).

\begin{example}[The Boolean/Freyd case]
    Let $X$ be a Stone space. Then $\EsaEt(X)$ is isomorphic to the slice category $\cat{Stone}_{\mathrm{LH}}/X$, where $\cat{Stone}_{\mathrm{LH}}$ is the category of Stone spaces and local homeomorphisms between them. Indeed, given an object of $\EsaEt(X)$, i.e., a spectral local homeomorphism $f\colon Y\to X$ with $Y$ an Esakia space, if the order on $X$ is discrete, then so must be $Y$ (because any spectral local homeomorphism is a strict p-morphism). It was shown by Freyd (see \cite[Exercise~9.11]{johnstonetopos}) that this category is a topos.
\end{example}

\subsection{Finite limits}

We note that, while calculations of limits will turn out to be easy in slice categories, the same need not be true in the ambient category $\Esa_{\SLH}$:

\begin{example}[Lack of finite limits in the ambient category]
    $\Esa_{\SLH}$ is easily seen not to have a terminal object.
    Indeed, suppose there is a terminal object $T$, and let us derive a contradiction. Then, there is a unique morphism $!_V \colon V \to T$, where $V$ is the V-shaped three-element poset below. 
    \[
        \begin{tikzcd}[row sep=1.2em, column sep=1.4em, arrows={no head}]
            \bullet \arrow[dr] && \bullet \arrow[dl] \\
            & \bullet
        \end{tikzcd}
    \]
    Since a strict p-morphism is injective on principal upsets, $!_V$ is injective. Let $\sigma \colon V \to V$ be the function that swaps the two maximal elements of $V$. Then, $!_V\sigma$ and $!_V$ are two different morphisms from $V$ to $T$, a contradiction.
    
    We next show that, unlike $\TopLH$,\footnote{The equaliser in $\TopLH$ is well known to be the interior of the usual equaliser (see \cite[Example~A.1.2.7]{johnstone2002sketches}).} $\Esa_{\SLH}$ does not have equalisers, either.
    To see this, let $X$ be the following Stone space:
    \[
    \begin{tikzpicture}[
        scale=1,
        point/.style={circle, fill, inner sep=1.7pt},
        every node/.style={font=\small}
    ]
    
        \node[point, label=below:{$\infty$}] (inf) at (0,0) {};
        
        \draw[gray!50] (-5.2,0) -- (-0.9,0);
        
        \node at (-5.8,0) {$\mathbb{N}_0$};
        \node[point, label=below:{$0_0$}] at (-4.8,0) {};
        \node[point, label=below:{$1_0$}] at (-3.8,0) {};
        \node[point, label=below:{$2_0$}] at (-2.8,0) {};
        \node[point, label=below:{$3_0$}] at (-1.8,0) {};
        \node at (-1.1,0) {$\cdots$};
        
        \draw[gray!50] (0.9,0.75) -- (5.2,0.75);
        
        \node at (1.1,0.75) {$\cdots$};
        \node[point, label=above:{$3_1$}] at (1.8,0.75) {};
        \node[point, label=above:{$2_1$}] at (2.8,0.75) {};
        \node[point, label=above:{$1_1$}] at (3.8,0.75) {};
        \node[point, label=above:{$0_1$}] at (4.8,0.75) {};
        \node at (5.8,0.75) {$\mathbb{N}_1$};
        
        \draw[gray!50] (0.9,-0.75) -- (5.2,-0.75);
        
        \node at (1.1,-0.75) {$\cdots$};
        \node[point, label=below:{$3_2$}] at (1.8,-0.75) {};
        \node[point, label=below:{$2_2$}] at (2.8,-0.75) {};
        \node[point, label=below:{$1_2$}] at (3.8,-0.75) {};
        \node[point, label=below:{$0_2$}] at (4.8,-0.75) {};
        \node at (5.8,-0.75) {$\mathbb{N}_2$};
    
    \end{tikzpicture}
    \]
    It consists of three copies of the discrete space $\N$, which we call $\N_{i}\coloneqq \{n_{i} : n\in \N\}$ for $i\in \{0,1,2\}$, together with a limit point $\infty$.
    That is, $X$ is the Alexandroff compactification of the discrete set $\N_0 \sqcup \N_1 \sqcup \N_2$.
    The Stone space $X$ is an Esakia space when considered with equality as a partial order. 
    The identity $\id_{X}$ is moreover easily seen to be a spectral local homeomorphism, and the map $f\colon X\to X$ that permutes the two copies $\N_1$ and $\N_2$ (by mapping $n_1$ to $n_2$ and vice versa) is likewise a spectral local homeomorphism.
    Assume that $\Esa_{\SLH}$ had an equaliser $i\colon \mathsf{Eq}(f,\id_{X})\to X$ of $f$ and $\id_X$, and let us reach a contradiction.
    We denote the image of $i$ by $U$; the set $U$ is contained in $\N_0 \cup \{\infty\}$ since the equaliser equalises.
    Moreover, for every $n \in \N$, the inclusion $\{n_0\} \hookrightarrow X$ is a morphism, since it is an injective spectral local homeomorphism that equalises $f$ and $\id_X$, and so it factors through $i$. Hence, $n_0\in U$ and therefore $\N_0\subseteq U$. 
    Thus, $\N_0 \subseteq U \subseteq \N_0 \cup \{\infty\}$.
    Note that $U$ is a clopen subset of $X$ (because the map $i$ is closed and, by the implication \eqref{i:spectral-local}$\Rightarrow$\eqref{i:nhb-spectral-lh-versus-strict-pm2} in \cref{thm.priest.loc}, open).
    However, no subset $S$ of $X$ satisfying $\N_0 \subseteq S \subseteq \N_0 \cup \{\infty\}$ is clopen.
\end{example}

We now study how limits behave in slice categories. For this, we use the following category-theoretic fact:

\begin{theorem}[Finite limits in the slice under the triangle property] \label{t:the-triangle-property-is-not-that-bad-GENERAL}
    Let $\cat{C}$ be a category and $\cat{L}$ a (not necessarily full) subcategory with the triangle property.
    Suppose that $\cat{C}$ has pullbacks and that the pullback cone in $\cat{C}$ of a cospan belonging to $\cat{L}$ belongs to $\cat{L}$.
    Then, 
    \begin{enumerate}
        \item \label{t:the-triangle-property-is-not-that-bad1}
        $\cat{L}$ has pullbacks, which are computed as in $\cat{C}$; that is, the inclusion $\cat{L}\hookrightarrow \cat{C}$ creates pullbacks\footnote{A functor $F\colon \cat{C}\to\cat{D}$ is said to \emph{create limits of shape $\cat{J}$} if, whenever $D\colon \cat{J}\to \cat{C}$ is a diagram and $\{\beta_j\colon M\to FD(j) : j\in \cat{J}\}$ is a limit cone for $FD$, there is a cone $\{\alpha_j\colon L\to D(j) : j\in \cat{J}\}$ over $D$ whose image under $F$ is isomorphic to $\{\beta_j\colon M\to FD(j) : j\in \cat{J}\}$, and any such cone is a limit for $D$.  It is well known that if $F\colon \cat{C}\to\cat{D}$ creates limits of shape $\cat{J}$ and $\cat{D}$ has limits of shape $\cat{J}$, then $F$ preserves and reflects limits of shape $\cat{J}$.}.
        \item \label{t:the-triangle-property-is-not-that-bad2}
        For every $X\in \L$, the category $\cat{L}/X$ is finitely complete:
        \begin{enumerate}
        
            \item \label{i:pullback-as-in-C}
            A pullback in $\cat{L}/X$ is computed as the pullback in $\cat{C}$, i.e., the composite
            \[
                \cat{L}/X \longrightarrow \cat{L}\longhookrightarrow \cat{C}
            \]
            creates pullbacks.
        
            \item 
            A terminal object of $\cat{L}/X$ is the identity $\id_X \colon X \to X$.

            \item 
            A product of two objects of $\cat{L}/X$ is computed as the pullback in $\cat{C}$.

            \item
            Whenever $a \colon Y\to X$ and $b \colon Z\to X$ are two objects in $\cat{L}/X$, and $f,g\colon Y\to Z$ two parallel morphisms between them, the equaliser of $f$ and $g$ in $\cat{C}$ exists and is also the equaliser in $\cat{L}/X$.
        \end{enumerate}
    \end{enumerate}
\end{theorem}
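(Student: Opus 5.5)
The plan is to derive everything from two facts. First, the hypothesis that $\cat{C}$-pullbacks of $\cat{L}$-cospans are $\cat{L}$-cones. Second, the triangle property, which makes any $\cat{C}$-morphism between objects of $\cat{L}$ commuting with $\cat{L}$-morphisms over a common base automatically an $\cat{L}$-morphism.

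For \eqref{t:the-triangle-property-is-not-that-bad1}, take an $\cat{L}$-cospan $Y \xrightarrow{a} X \xleftarrow{b} Z$. Let $P$ with projections $p,q$ be its pullback in $\cat{C}$; by hypothesis this cone lies in $\cat{L}$. Given an $\cat{L}$-cone $Y \xleftarrow{u} W \xrightarrow{v} Z$, the $\cat{C}$-universal property gives a unique $t\colon W\to P$ with $pt=u$ and $qt=v$. Since $p$ and $pt=u$ are $\cat{L}$-morphisms, the triangle property shows that $t$ is in $\cat{L}$. Uniqueness is inherited from $\cat{C}$, so $P$ is a pullback in $\cat{L}$. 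For creation, note that any $\cat{L}$-cone mapping to a pullback cone in $\cat{C}$ is isomorphic to $P$ through a $\cat{C}$-isomorphism. That isomorphism, and its inverse, are $\cat{L}$-morphisms by the triangle property applied over $p$. Hence the cone is a limit in $\cat{L}$.

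For \eqref{t:the-triangle-property-is-not-that-bad2}, I would first show that $\id_X$ is terminal in $\cat{L}/X$: for an object $a\colon Y\to X$ the only map over $X$ is $a$ itself, which is in $\cat{L}$. For pullbacks in $\cat{L}/X$, take a cospan $Y\to W\leftarrow Z$ over $X$ whose legs are $\cat{L}$-morphisms. Its $\cat{C}$-pullback $P$ lies in $\cat{L}$ by \eqref{t:the-triangle-property-is-not-that-bad1}, and $P\to W\to X$ is a composite of $\cat{L}$-morphisms, so $P$ is an object of $\cat{L}/X$. A cone in $\cat{L}/X$ is in particular a cone in $\cat{L}$, so it factors uniquely through $P$ by \eqref{t:the-triangle-property-is-not-that-bad1}. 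The factorisation automatically commutes over $X$, so pullbacks in the slice are computed as in $\cat{C}$. Products are then the special case of pullbacks over the terminal object $\id_X$, which is exactly the $\cat{C}$-pullback of $a$ and $b$.

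For equalisers of $f,g\colon (Y,a)\to(Z,b)$, the standard approach is to form the pullback of $\langle \id_Y, f\rangle$ and $\langle \id_Y, g\rangle\colon Y\to Y\times_X Z$. Here $Y\times_X Z$ is the product in the slice. The two maps $\langle \id_Y,f\rangle,\langle\id_Y,g\rangle$ are $\cat{L}$-morphisms by the triangle property, since composing with the projection to $Y$ gives $\id_Y$. Their pullback is therefore computed in $\cat{C}$ and lies in $\cat{L}/X$, and a pullback of this form is an equaliser of $f$ and $g$ in the slice.

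It remains to identify this object with the equaliser in $\cat{C}$. In $\cat{C}$ the pullback is the equaliser of $f$ and $g$, because $Y\times_X Z$ is a $\cat{C}$-pullback and $bf=a=bg$. I expect this bookkeeping step to be the main point requiring care. One must check that the $\cat{C}$-equaliser $e\colon E\to Y$ coincides with the constructed pullback, and hence that $e$ is in $\cat{L}$. One must also check that every $\cat{L}/X$-morphism equalising $f,g$ factors through $e$ by an $\cat{L}$-morphism. This factorisation again follows from the triangle property applied to $e$. Together these give finite completeness of $\cat{L}/X$.
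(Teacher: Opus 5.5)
Your proof is correct. Parts (1) and (2)(a)--(c) follow the paper's argument. The paper is terser there; you additionally verify the ``any such cone is a limit'' clause of creation, by applying the triangle property to the comparison isomorphism and its inverse.

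The real difference is in the equaliser clause (2)(d). The paper first obtains finite completeness of $\cat{L}/X$ abstractly, from pullbacks plus the terminal object. It then invokes a general lemma from Johnstone: a pullback-preserving functor out of a finitely complete category preserves equalisers. Applied to the composite $\cat{L}/X \to \cat{L} \hookrightarrow \cat{C}$, this shows the slice equaliser is also a $\cat{C}$-equaliser.

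You instead construct the equaliser explicitly as the pullback of $\langle \id_Y, f\rangle$ and $\langle \id_Y, g\rangle \colon Y \to Y\times_X Z$. The triangle property, via $\pi_Y\langle \id_Y,f\rangle=\id_Y$, shows both maps are $\cat{L}$-morphisms. You then check directly that this pullback is an equaliser in $\cat{C}$ as well.

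The step you flag as needing care is where the content lies, and your stated reason is the right one. $Y\times_X Z$ need not be a product in $\cat{C}$, but its projections are jointly monic because it is a pullback. Hence $fh=gh$ forces $\langle \id_Y,f\rangle h=\langle \id_Y,g\rangle h$. This is the same observation that underlies the lemma the paper cites: $F(A\times B)$ is a pullback over $F1$, so its projections are jointly monic.

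Your route is self-contained and gives an explicit formula for the equaliser. The paper's route is shorter and packages the same idea as a reusable fact about pullback-preserving functors.
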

\begin{proof}
    \eqref{t:the-triangle-property-is-not-that-bad1}
    Let $f\colon Y\to X$ and $g\colon Z\to X$ be morphisms in $\cat{L}$ and let
    \[
        \begin{tikzcd}
        	{Y\times_X Z}\arrow[dr, phantom, very near start, "\lrcorner"] & Z \\
        	Y & X
        	\arrow["{\pi_Z}", from=1-1, to=1-2]
        	\arrow["{\pi_Y}"', from=1-1, to=2-1]
        	\arrow["g", from=1-2, to=2-2]
        	\arrow["f"', from=2-1, to=2-2]
        \end{tikzcd}
    \]
    be the pullback diagram in $\cat{C}$. By assumption, $Y \times_X Z$ is an object of $\cat{L}$, and $\pi_Y$ and $\pi_Z$ are $\cat{L}$-morphisms. In order to conclude that the diagram is also a pullback in $\cat{L}$, we need to show that, given another cone in $\L$, the induced morphism also belongs to $\L$; this clearly follows from the triangle property.
    
    \eqref{t:the-triangle-property-is-not-that-bad2}
    Pullbacks in $\cat{L}/X$ are computed as in $\cat{L}$, and the identity $\id_X \colon X \to X$ is a terminal object of $\cat{L}/X$.
    Since $\cat{L}/X$ has pullbacks and a terminal object, it is finitely complete.
    Moreover, it is easy to check that products in $\cat{L}/X$ are the same thing as pullbacks in $\cat{L}$. 
    Finally, we prove the assertion about equalisers. The composite functor
    \[
        \cat{L}/X \longrightarrow \cat{L}\longhookrightarrow \cat{C}
    \]
    preserves pullbacks. By
    \cite[Lem.~A.1.2.9]{johnstone2002sketches} (see also
    \cite[p.~742]{Pare1990}), any pullback-preserving functor whose domain is
    finitely complete preserves equalisers. Since $\cat{L}/X$ is finitely
    complete, the composite functor above preserves equalisers; thus, the equaliser in $\cat{L}/X$, which exists because $\cat{L}/X$ is finitely complete, is also the equaliser in $\cat{C}$.
\end{proof}

From this, we can deduce the following description of limits in $\EsaEt(X)$; similar facts were proved, with the base category being $\Esa$, in \cite[Cor.~4.14]{kuznetsovetale}.

\begin{remark}[Limits in $\Pries$] \label{r:limits-in-Pries}
    Recall that the category $\Pries$ has limits, which are computed as in the category $\Set$ of sets and functions (see e.g.\ \cite[p.~69]{Gehrke2024}), as well as in the categories $\Top$ of topological spaces and continuous functions, and $\Pos$ of posets and order-preserving maps.
    In fact, the forgetful functors in the following commutative diagram preserve limits.
    \[
    \begin{tikzcd}
        &\Pries \arrow{rd}{}\arrow{dl}\\
        \Top \arrow{rd}&&\cat{Pos}\arrow{dl}\\
        & \Set
    \end{tikzcd}
    \]
\end{remark}

\begin{lemma}[Pullbacks of Esakia spaces and spectral local homeomorphisms] \label{l:pullback}
    The pullback cone in $\Pries$ of a cospan in $\Esa_\SLH$ is in $\Esa_\SLH$.
    That is, given a pullback in $\Pries$
    \[
        \begin{tikzcd}
        	{Y\times_X Z}\arrow[dr, phantom, very near start, "\lrcorner"] & Z \\
        	Y & X
        	\arrow["{\pi_Z}", from=1-1, to=1-2]
        	\arrow["{\pi_Y}"', from=1-1, to=2-1]
        	\arrow["g", from=1-2, to=2-2]
        	\arrow["f"', from=2-1, to=2-2]
        \end{tikzcd}
    \]
    with $X$, $Y$ and $Z$ Esakia spaces and 
    $f \colon Y \to X$ and $g \colon Z \to X$ spectral local homeomorphisms, we have that $Y\times_X Z$ is an Esakia space and $\pi_Y$ and $\pi_Z$ are spectral local homeomorphisms.
\end{lemma}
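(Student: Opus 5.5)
By \cref{r:limits-in-Pries}, the pullback $P \coloneqq Y\times_X Z$ in $\Pries$ is the set $\{(y,z) : f(y)=g(z)\}$ with the order and topology inherited from the product $Y\times Z$. It is a closed subset of $Y\times Z$, since it is the preimage of the closed diagonal of $X$. Hence $P$ is a Priestley space by \cref{properties.pri}. The plan is therefore to show only that $\pi_Y$ and $\pi_Z$ are spectral local homeomorphisms. Once this is done, \cref{prop: closure of Esakia under spectral} applied to $\pi_Y\colon P\to Y$, with $Y$ Esakia, yields that $P$ is an Esakia space. By symmetry it suffices to treat $\pi_Y$.

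To show that $\pi_Y$ is a spectral local homeomorphism, I will use criterion \eqref{i:nhb-spectral-lh-versus-strict-pm2} of \cref{thm.priest.loc}: a strict p-morphism that is a local homeomorphism for the Priestley topologies.

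\emph{Topology.} By \eqref{i:spectral-local}$\Rightarrow$\eqref{i:nhb-spectral-lh-versus-strict-pm2} of \cref{thm.priest.loc}, $g$ is a local homeomorphism of the underlying topological spaces. The set-level pullback carries the subspace topology of the product, so it is the pullback in $\Top$. Local homeomorphisms are stable under pullback in $\Top$, so $\pi_Y$ is a local homeomorphism. Concretely, given $(y,z)\in P$, choose an open $A\ni z$ with $g{\restriction}_A\colon A\to g[A]$ a homeomorphism. Then $(Y\times A)\cap P$ is an open neighbourhood of $(y,z)$. It is mapped by $\pi_Y$ homeomorphically onto the open set $f^{-1}[g[A]]$, with inverse $y'\mapsto (y',(g{\restriction}_A)^{-1}(f(y')))$.

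\emph{Order.} Let $(y,z)\in P$ and $y'\geq y$. We need a unique $(y'',z'')\geq (y,z)$ in $P$ with $y''=y'$. Since $f(y)=g(z)\le f(y')$ and $g$ is a strict p-morphism, there is a unique $z'\geq z$ with $g(z')=f(y')$. Then $(y',z')$ is the required point. Uniqueness follows because any other candidate $(y',z'')$ has $z''\ge z$ and $g(z'')=f(y')$, which forces $z''=z'$. Order preservation of $\pi_Y$ is clear.

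I expect no real obstacle; the argument is a direct verification. The only point needing care is making sure that the $\Pries$ pullback genuinely coincides with the $\Top$ pullback, so that the topological pullback-stability argument applies. This is exactly what \cref{r:limits-in-Pries} guarantees.
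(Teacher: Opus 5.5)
Your proposal is correct and follows essentially the same route as the paper. Both use pullback-stability of local homeomorphisms in $\Top$, a direct check that $\pi_Y$ (resp.\ $\pi_Z$) is strict using strictness of $g$ (resp.\ $f$), the implication \eqref{i:nhb-spectral-lh-versus-strict-pm2}$\Rightarrow$\eqref{i:spectral-local} of \cref{thm.priest.loc}, and finally \cref{prop: closure of Esakia under spectral} for the Esakia property; you also explicitly check that $Y\times_X Z$ is a Priestley space, a hypothesis of \cref{prop: closure of Esakia under spectral} that the paper leaves implicit.
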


\begin{proof}
    The pullback is also a pullback in $\Top$.
    Note that $\pi_Y$ and $\pi_Z$ are local homeomorphisms because local homeomorphisms are stable under pullback in $\Top$ (see \cite[Example~A.1.2.7]{johnstone2002sketches}).

    We now prove that $\pi_Y$ is a strict p-morphism.
    Assume that $\pi_{Y}(y,z)\leq y'$; then $y\leq y'$, and so $f(y)\leq f(y')$, and so $g(z)\leq f(y')$; by the strict p-morphism condition there is a unique $z'\geq z$ such that $g(z')=f(y')$. Hence, $(y,z)\leq (y',z')$, and this element is unique as well. So, $\pi_{Y}$ is a strict p-morphism. Similarly, $\pi_Z$ is a strict p-morphism.  Hence $\pi_Y$ and $\pi_Z$ are spectral local homeomorphisms by the implication \eqref{i:nhb-spectral-lh-versus-strict-pm2}$\Rightarrow$\eqref{i:spectral-local} in \cref{thm.priest.loc}.

    Since $\pi_{Y}$ is a spectral local homeomorphism, and $Y$ is an Esakia space, we can therefore conclude that $Y\times_{X}Z$ is an Esakia space by \cref{prop: closure of Esakia under spectral}.
\end{proof}

\begin{theorem}[Finite limits in $\EsaEt(X)$] \label{t:the-triangle-property-is-not-that-bad}
    For every Esakia space $X$, the category $\EsaEt(X)$ is finitely complete:
    \begin{enumerate}
    
        \item \label{eq: terminal object} 
        \emph{(Terminal object)} 
        The identity $\id_{X} \colon X \to X$ is the terminal object in $\EsaEt(X)$.
        
        \item \label{eq: binary product} 
        \emph{(Binary product)} 
        If $f \colon Y\to X$ and $g \colon Z\to X$ are two objects in $\EsaEt(X)$, then the pullback $Y\times_{X}Z$ as ordered spaces with the two projection maps is the product in $\EsaEt(X)$.
        
        \item \label{eq: equaliser} 
        \emph{(Equaliser)} 
        Whenever $f \colon Y\to X$ and $g \colon Z\to X$ are two objects of $\EsaEt(X)$, and $p_{1},p_{2} \colon Y\to Z$ are two spectral local homeomorphisms making the two triangles in the diagram commute,
        \[
            \begin{tikzcd}
                Y \arrow[rr, shift left=.5ex, "p_{1}"] \arrow[rr, shift right=.5ex, swap, "p_{2}"] \arrow[dr, swap, "f"] && Z \arrow[dl, "g"] \\
                & X
            \end{tikzcd}
        \]
        then
        \[
            \mathsf{Eq}(p_{1},p_{2})=\{y\in Y : p_{1}(y)=p_{2}(y)\},
        \]
        together with the inclusion into $Y$, is the equaliser of $p_{1}$ and $p_{2}$ in $\EsaEt(X)$.   

        \item \label{i:pullbacks}
        \emph{(Pullbacks)} The  forgetful functor $\EsaEt(X) \to \Pries$ (i.e., the domain functor) creates pullbacks.
    \end{enumerate}
\end{theorem}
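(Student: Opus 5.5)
The plan is to instantiate \cref{t:the-triangle-property-is-not-that-bad-GENERAL} with $\cat{C}=\Pries$ and $\cat{L}=\Esa_{\SLH}$, regarded as a (non-full) subcategory of $\Pries$; this is legitimate because, by the implication \eqref{i:spectral-local}$\Rightarrow$\eqref{i:nhb-spectral-lh-versus-strict-pm2} in \cref{thm.priest.loc}, spectral local homeomorphisms are continuous and order-preserving. Three hypotheses have to be checked. First, $\Pries$ has pullbacks; this is \cref{r:limits-in-Pries}, and the pullback is computed as in $\Set$, with the product order and the subspace topology of the product. Second, the pullback cone in $\Pries$ of a cospan in $\Esa_{\SLH}$ lies in $\Esa_{\SLH}$; this is exactly \cref{l:pullback}. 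Third, $\Esa_{\SLH}$ has the triangle property in $\Pries$. This follows from \cref{prop: Triangle property for spectral local homeomorphisms}: that result is stated for Priestley spaces, and Esakia spaces are in particular Priestley spaces. Since $\EsaEt(X)=\Esa_{\SLH}/X$, part \eqref{t:the-triangle-property-is-not-that-bad2} of the general theorem then gives finite completeness, together with items \eqref{eq: terminal object}, \eqref{eq: binary product} and \eqref{i:pullbacks} directly. For \eqref{eq: binary product}, the pullback in $\Pries$ is the set $Y\times_X Z$ carrying the product order and topology, which is the description in the statement.

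The only item that needs more than a translation is \eqref{eq: equaliser}. The general theorem says that the equaliser in $\EsaEt(X)$ is also the equaliser of $p_1,p_2$ in $\Pries$. By \cref{r:limits-in-Pries}, the latter is the set $\{y\in Y : p_1(y)=p_2(y)\}$ with the induced order and topology. Hence this subspace, with its inclusion, is (up to isomorphism) the equaliser in $\EsaEt(X)$.

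I do not expect any real obstacle. The only point to watch is that the equaliser, computed abstractly in $\EsaEt(X)$, genuinely coincides with the concrete set-theoretic one. That is precisely what the equaliser-preservation argument inside \cref{t:the-triangle-property-is-not-that-bad-GENERAL} delivers, so no separate topological check is needed. Still, as a sanity check, one can verify directly that $\mathsf{Eq}(p_1,p_2)$ is clopen in $Y$. It is closed because $Z$ is Hausdorff. It is open because $p_1(y)=p_2(y)$ gives $g p_1 = g p_2 = f$, so around any point both $p_i$ are local sections of the locally injective map $g$ agreeing at a point, and they therefore agree on a neighbourhood. It is moreover an upset, by the uniqueness clause of strictness of $g$. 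This confirms that the inclusion is a spectral local homeomorphism, since it is a clopen-upset inclusion as in \cref{thm.priest.loc}.
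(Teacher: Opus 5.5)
Your proposal is correct and matches the paper's proof: the paper also instantiates \cref{t:the-triangle-property-is-not-that-bad-GENERAL} with $\cat{C}=\Pries$ and $\cat{L}=\Esa_{\SLH}$, and discharges the three hypotheses with \cref{prop: Triangle property for spectral local homeomorphisms}, \cref{r:limits-in-Pries} and \cref{l:pullback}. Your extra direct check that $\mathsf{Eq}(p_1,p_2)$ is a clopen upset of $Y$ is not in the paper; it is valid but redundant, since the equaliser-preservation clause of the general theorem already gives it.
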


\begin{proof}
    First, it clearly follows from \cref{prop: Triangle property for spectral local homeomorphisms} that the category $\Esa_{\SLH}$ satisfies the triangle property in $\Pries$.  Moreover, $\Pries$ has limits (see \cref{r:limits-in-Pries}). By \cref{l:pullback}, the pullback cone in $\Pries$ of a cospan in $\Esa_{\SLH}$ is in $\Esa_{\SLH}$.
    The result now follows from \cref{t:the-triangle-property-is-not-that-bad-GENERAL}.
\end{proof}

\begin{corollary}[Mono $\Leftrightarrow$ injective]\label{lem: Monos are injective for spectral}
    Let $X$ be an Esakia space.
    A morphism in $\EsaEt(X)$ is a monomorphism if and only if it is injective. 
\end{corollary}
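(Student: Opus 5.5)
The plan is to use the standard characterisation of monomorphisms via kernel pairs: a morphism $m$ in a category with pullbacks is monic if and only if the two projections of its kernel pair coincide, equivalently if and only if the diagonal into the kernel pair is an isomorphism. By \cref{t:the-triangle-property-is-not-that-bad}\eqref{i:pullbacks}, pullbacks in $\EsaEt(X)$ are computed as in $\Pries$, hence as in $\Set$ (\cref{r:limits-in-Pries}). This reduces the question to a set-theoretic one.

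The direction ``injective $\Rightarrow$ mono'' is immediate. Let $m \colon Y_1 \to Y_2$ be an injective morphism in $\EsaEt(X)$, and let $u,v \colon Z \to Y_1$ be morphisms of $\EsaEt(X)$ with $mu = mv$. These are in particular functions, and $m$ is injective as a function, so $u = v$.

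For ``mono $\Rightarrow$ injective'', let $m \colon Y_1 \to Y_2$ be a monomorphism in $\EsaEt(X)$. Form the pullback of $m$ along itself in $\EsaEt(X)$. By \cref{t:the-triangle-property-is-not-that-bad}\eqref{i:pullbacks} its underlying set is
\[
K = \{(a,b) \in Y_1 \times Y_1 : m(a) = m(b)\},
\]
and the two projections $\pi_1,\pi_2 \colon K \to Y_1$ are morphisms of $\EsaEt(X)$ satisfying $m\pi_1 = m\pi_2$. Since $m$ is monic, $\pi_1 = \pi_2$ as morphisms, hence as functions. So every $(a,b) \in K$ has $a = b$, which means $m$ is injective.

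The only point needing care is that the kernel pair really is an object of $\EsaEt(X)$ with underlying set as displayed, i.e.\ that it is an Esakia space with spectral local homeomorphisms as projections and with the structure map to $X$ obtained by composition. This is exactly what \cref{l:pullback} and \cref{t:the-triangle-property-is-not-that-bad} provide. In particular, the pullback of $m$ along itself in the slice is the pullback over $Y_2$ in $\Esa_{\SLH}$, which is created from $\Pries$. So no real obstacle remains beyond citing these results.
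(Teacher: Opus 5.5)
Your proposal is correct and is essentially the paper's argument. The paper observes that the composite forgetful functor $\EsaEt(X)\to\Pries\to\Set$ preserves pullbacks, hence preserves monomorphisms (this is exactly your kernel-pair computation), and reflects monomorphisms because it is faithful (this is your direct argument for injective $\Rightarrow$ mono); you have simply unpacked these two general facts in this instance.
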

\begin{proof}
    By \cref{t:the-triangle-property-is-not-that-bad}\eqref{i:pullbacks}, the forgetful functor $\EsaEt(X) \to \Pries$  preserves pullbacks. Moreover, the forgetful functor $\Pries \to \Set$ preserves limits and hence the composite $\EsaEt(X) \to \Pries \to \Set$ preserves pullbacks. Therefore, the forgetful functor $\EsaEt(X) \to \Set$ preserves monomorphisms, and it also reflects them because it is faithful.
\end{proof}

\begin{corollary}[Subobject = clopen upset] \label{p:subobjects-EsaEt}
    Let $X$ be an Esakia space and $f \colon Y \to X$ an object of $\EsaEt(X)$.
    The posets $\ClopUp(Y)$ and $\Sub_{\EsaEt(X)}(f)$ are order-isomorphic, via the map
    sending $V\in \ClopUp(Y)$ to the subobject of $f$ represented by
    \[
        \begin{tikzcd}
            V \arrow[swap]{rd}{f{\restriction}_V} \arrow[hook]{rr} & & Y \arrow{ld}{f} \\
            & X.
        \end{tikzcd}
    \]
\end{corollary}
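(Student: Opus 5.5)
We will show that the assignment $V\mapsto [V\hookrightarrow Y]$ is well defined, order-preserving and order-reflecting, and surjective onto $\Sub_{\EsaEt(X)}(f)$.

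\emph{Well-definedness.} Let $V\in\ClopUp(Y)$. By \cref{prop: closure of Esakia under spectral}, or directly by \cref{properties.pri}\eqref{properties.pri2} since $V$ is a closed upset of the Esakia space $Y$, the space $V$ is an Esakia space. The inclusion $\iota\colon V\hookrightarrow Y$ satisfies condition \eqref{i:in-terms-of-Priestley} of \cref{thm.priest.loc}: for every $y\in V$, take $V$ itself as the clopen upset; then $\iota[V]=V$ is a clopen upset of $Y$, and $\iota$ is an order-homeomorphism onto it. Hence $\iota$ is a spectral local homeomorphism. The composite $f\iota=f{\restriction}_V$ is then a spectral local homeomorphism, since these maps compose (they are local homeomorphisms of the spectral topologies). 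So $\iota$ is a morphism in $\EsaEt(X)$, and it is injective, hence a mono by \cref{lem: Monos are injective for spectral}.

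\emph{Surjectivity.} Let $m\colon Z\to Y$ be a mono in $\EsaEt(X)$. By \cref{lem: Monos are injective for spectral}, $m$ is injective. We set $V\coloneqq m[Z]$ and show that $V$ is a clopen upset and that $m\colon Z\to V$ is an order-homeomorphism. First, $V$ is an upset because $m$ is a p-morphism (\cref{thm.priest.loc}\eqref{i:nhb-spectral-lh-versus-strict-pm2} and \cref{rem: remark on p-morphisms}). Next, $V$ is closed since $Z$ is compact, and open since $m$ is a Priestley local homeomorphism, hence an open map; so $V$ is clopen. The map $m\colon Z\to V$ is a continuous bijection from a compact space to a Hausdorff space, hence a homeomorphism. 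It remains to see that $m$ reflects the order. If $m(z)\le m(w)$, strictness gives $z'\ge z$ with $m(z')=m(w)$, and injectivity gives $z'=w$, so $z\le w$. Thus $m$ is an order-homeomorphism $Z\to V$ over $X$, so $m$ and the inclusion $V\hookrightarrow Y$ represent the same subobject. The main point to get right is exactly this order-reflection and the openness of the image; both come from \cref{thm.priest.loc}.

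\emph{Order.} Suppose $V\subseteq V'$. The inclusion $V\hookrightarrow V'$ is a morphism over $Y$: it is a spectral local homeomorphism by the same argument as above, or by the triangle property (\cref{prop: Triangle property for spectral local homeomorphisms}). Hence $[V]\le[V']$. Conversely, if $[V]\le[V']$, there is a map $h\colon V\to V'$ commuting with the two inclusions into $Y$. Then $h$ must be the identity on points, so $V\subseteq V'$. The map is therefore an order-embedding that is surjective, i.e.\ an order-isomorphism.
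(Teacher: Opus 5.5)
Your proposal is correct and follows essentially the same route as the paper: monos are injective by \cref{lem: Monos are injective for spectral}, and the image $m[Z]$ is a clopen upset by compactness, openness of local homeomorphisms and the p-morphism property. Moreover, $m\colon Z\to m[Z]$ is an order-homeomorphism by strictness plus injectivity; the only difference is that you spell out the order-embedding argument (the comparison map $h\colon V\to V'$ is the identity on points), which the paper compresses into ``these two assignments are inverse of each other and preserve inclusions.''
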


\begin{proof}
    If $V\in \ClopUp(Y)$, then $V$ is an Esakia space and the inclusion
    $V\hookrightarrow Y$ is an injective spectral local homeomorphism, hence a subobject
    of $f$.

    Conversely, let
    $m\colon (Z\xrightarrow{g}X)\to (Y\xrightarrow{f}X)$ represent a subobject of $f$.
    By \cref{lem: Monos are injective for spectral}, $m$ is injective. Its image
    $m[Z]$ is closed because $Z$ is compact and $Y$ is Hausdorff, open because $m$ is a
    local homeomorphism, and an upset because $m$ is a p-morphism. Thus
    $m[Z]\in \ClopUp(Y)$. Moreover, $m\colon Z\to m[Z]$ is an isomorphism in
    $\EsaEt(X)$: it is a homeomorphism onto its image and reflects the order by
    strictness and injectivity. Hence, $m$ represents the same subobject as $m[Z]\hookrightarrow Y$.

    These two assignments are inverse of each other and preserve inclusions.
\end{proof}

\begin{corollary}[Truth value = clopen upset of the base] \label{p:subterminals-EsaEt}
    Let $X$ be an Esakia space. Then
    \[
        \Sub_{\EsaEt(X)}(1) \cong \ClopUp(X).
    \]
\end{corollary}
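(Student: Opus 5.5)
This should follow directly from the preceding two results. By \cref{t:the-triangle-property-is-not-that-bad}\eqref{eq: terminal object}, the terminal object $1$ of $\EsaEt(X)$ is $\id_X \colon X \to X$. So $\Sub_{\EsaEt(X)}(1)$ is $\Sub_{\EsaEt(X)}(\id_X)$. Applying \cref{p:subobjects-EsaEt} with $Y = X$ and $f = \id_X$ then gives an order-isomorphism $\ClopUp(X) \cong \Sub_{\EsaEt(X)}(\id_X)$. Under it, a clopen upset $V$ is sent to the subobject represented by the inclusion $V \hookrightarrow X$, viewed as an object of $\EsaEt(X)$ via $\id_X{\restriction}_V$.

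The only point to make explicit is that this is an isomorphism of lattices, not just of posets. An order-isomorphism between posets automatically preserves whatever meets and joins exist. Both sides are lattices: $\ClopUp(X)$ is a Heyting algebra, and $\Sub(1)$ in a finitely complete category has meets given by pullback. So the order-isomorphism is a lattice isomorphism. When $\EsaEt(X)$ turns out to be a topos, it is also a Heyting algebra isomorphism, since Heyting implication is determined by the order.

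I expect no real obstacle here. All the substantive work, namely that monos are injective and that the images of monos are clopen upsets, was already done in \cref{lem: Monos are injective for spectral} and \cref{p:subobjects-EsaEt}. The only thing to double-check is that the terminal object is indeed $\id_X$ and that subobjects of an isomorphic copy of $1$ correspond canonically, which is immediate.
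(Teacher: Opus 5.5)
Your proposal is correct and is exactly the paper's proof: identify the terminal object as $\id_X$ via \cref{t:the-triangle-property-is-not-that-bad}\eqref{eq: terminal object}, then apply \cref{p:subobjects-EsaEt} with $f=\id_X$. Your extra remark, that an order-isomorphism automatically preserves whatever lattice and Heyting structure exists, is correct and harmless.
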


\begin{proof}
    By \cref{t:the-triangle-property-is-not-that-bad}\eqref{eq: terminal object},
    the terminal object of $\EsaEt(X)$ is $\id_X\colon X\to X$.
    Apply \cref{p:subobjects-EsaEt} to $f=\id_X$.
\end{proof}

\begin{remark}[Priestley spectral local homeomorphisms]
    The results of this subsection hold already at the level of Priestley spaces, with a straightforward adaptation of the proofs.
    Indeed, let $\Pries_{\SLH}$ be the category of Priestley spaces and spectral local homeomorphisms. 
    Then, for every Priestley space $X$, the slice category $\Pries_{\SLH}/X$ is finitely complete, the monomorphisms are precisely the injective morphisms, and $\Sub_{\Pries_{\SLH}/X}(1)\cong \ClopUp(X)$.
    Note that, by \cref{prop: closure of Esakia under spectral}, whenever $X$ is an Esakia space, $\Pries_{\SLH}/X$ coincides with the category of our interest $\EsaEt(X)$.
\end{remark}

\subsection{Power objects}

We now describe power objects, which require a more intricate construction. Here, we will need to assume étale-finiteness.

\begin{definition}[The candidate power object]
    Let $f \colon Y\to X$ be a spectral local homeomorphism between Esakia spaces, with $X$ étale-finite.
    We define the set 
    \[
        P_{f}(Y)\coloneqq \{(x,K) : x\in X,\, K \text{ an upset of }Y \text{ with } K\subseteq f^{-1}[{\u}x]\}.
    \]
    On $P_{f}(Y)$, we define the partial order $\preceq$ by setting
    \[
        (x,K)\preceq (x',K') \iff x\leq x' \text{ and }K'=K\cap f^{-1}[{\u}x'].
    \]
    Note that $f^{-1}[{\u}x]$ is finite, and hence, for every $(x,K) \in P_f(Y)$, $K$ is closed.
    Then, by the definition of the Vietoris space (see \cref{d:Vietoris}),
    \[
        P_{f}(Y)\subseteq X\times \V(Y).
    \]
    We equip $P_{f}(Y)$ with two topologies, which we will show coincide.
    \begin{itemize}
        \item (The topology $\tau_\Viet$) We denote by $\tau_{\Viet}$ the subspace topology on $P_f(Y)$ inherited from $X\times \V(Y)$.
        \item (The topology $\tau_{\Pi, \Lambda}$)
        For every clopen $W\subseteq X$, we define
        \[
            \Pi(W) \coloneqq \{(x,K)\in P_{f}(Y) : x\in W\}.
        \]
        For every clopen upset $V\subseteq Y$, we define 
        \[
            \Lambda(V) \coloneqq \{(x,K)\in P_{f}(Y) : K=V\cap f^{-1}[{\u}x]\}.
        \]
        We define $\tau_{\Pi,\Lambda}$ as the topology on $P_f(Y)$ with subbasis consisting of the sets of the form $\Lambda(V)$ and $P_f(Y) \smallsetminus \Lambda(V)$ for a clopen upset $V \subseteq Y$, as well as the sets of the form $\Pi(W)$ for a clopen $W\subseteq X$.\footnote{Mnemonics: $\Pi$ should be reminiscent of the projection $(x, K) \mapsto x$, while the symbol $\Lambda$ is similar to the intersection symbol $\cap$ appearing in $V \cap f^{-1}[{\u}x]$.}
    \end{itemize}
    Finally, we define the (restriction of the) projection map
    \begin{align*}
        p \colon P_f(Y) & \longrightarrow X\\
        (x, K) & \longmapsto x.
    \end{align*}
\end{definition}

To develop some intuition about these constructions, we give some basic examples of power objects:

\begin{example}[Examples of power objects] \hfill
    \begin{enumerate}
        \item \label{eq: subobject classifier}
        (Subobject classifier) Let $X$ be an étale-finite Esakia space, and let $\acute{e}\colon  X\to X_{0}$ be a continuous strict p-morphism, where $X_{0}$ is finite. 
        The object $P_{\id_{X}}(X)$ is simply
        \begin{equation*}
            \{(x,K) : x\in X, K\subseteq {\u}x,\, K \text{ an upset} \}.
        \end{equation*}
        The intuition for this is that the pair $(x,K)$ is interpreted as saying that $K$ consists of the points in $\u x$ that are ``true''. Note that, since $\acute{e}$ is a strict p-morphism, the only possible truth values are, up to isomorphism, the upsets in $X_{0}$.
        
        The topology $\tau_{\Pi,\Lambda}$ on $P_{\id_{X}}(X)$ can be motivated from this point of view:
        \begin{itemize}
            \item 
            Since we want $(P_{\id_{X}}(X),p)$ to be an object in $\EsaEt(X)$, we need $\Pi(W)$ to be clopen, for each clopen $W\subseteq X$;
            \item 
            Since we want the possible ``truth values'' to form clopen subsets, we need to add, for each $x_{0}\in X_{0}$, the upset
            \begin{align*}
                \Lambda(\acute{e}^{-1}[{\u}x_{0}]) &= \{(x,K) \in P_{\id_X}(X) : K=\acute{e}^{-1}[{\u}x_{0}]\cap {\u}x\}\\
                &=\{(x,K) \in P_{\id_X}(X) : \acute{e}[K]={\u}\acute{e}(x)\cap {\u}x_{0}\};
            \end{align*}
            a natural generalisation of this is to take any clopen upset $V\subseteq X$.
        \end{itemize}
        On the other hand, we want $P_{\id_{X}}(X)$ to be an Esakia space, which is easiest to achieve by seeing it as a subspace of $X\times \V(X)$, since the latter is always compact. This motivates the introduction of the topology $\tau_{\Viet}$.
    
        \item (Boolean subobject classifier)
        As a special case of item \eqref{eq: subobject classifier}, if $X$ is a Stone space, then the space $P_{\id_{X}}(X)$ can equivalently be described as
        \begin{equation*}
            \{(x,1) : x\in X\}\sqcup \{(x,0) : x\in X\},
        \end{equation*}
        with the disjoint union topology and disjoint union partial order,
        where we identify $1=\{x\}$ and $0=\varnothing$; each summand is order-homeomorphic to $X$.
        It follows that $P_{\id_{X}}(X)\cong X\sqcup X$.
    \end{enumerate}
     
\end{example}

In this section, we prove that, if $f\colon Y\to X$ is a spectral local
homeomorphism between Esakia spaces and $X$ is étale-finite, then
\[
\tau_{\Pi,\Lambda}=\tau_{\Viet}.
\]
The two topologies are useful for different purposes: $\tau_{\Pi,\Lambda}$ makes Priestley separation transparent, while $\tau_{\Viet}$ makes compactness
transparent. Their coincidence then allows us to show that $P_f(Y)$ is a Priestley space, and that the projection
$p\colon P_f(Y)\to X$ is a spectral local homeomorphism, and hence that
$P_f(Y)$ is an Esakia space. We will then prove that $p$ is the power object of
$f$ in $\EsaEt(X)$.

\begin{lemma}[In principal fibres: upset = restriction of clopen] \label{lem: Separation for arbitrary elements}
    Let $f \colon Y \to X$ be a spectral local homeomorphism between Esakia spaces, with $X$ étale-finite.
    For each $(x,K)\in P_{f}(Y)$, there is a clopen upset $V\subseteq Y$ such that $K=V\cap f^{-1}[{\u}x]$.
\end{lemma}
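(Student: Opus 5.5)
The plan is to build $V$ as a finite union of clopen upsets, one for each point of $K$, using that $F \coloneqq f^{-1}[{\u}x]$ is finite (\cref{cor:finiteness}) and that $Y$ is a Priestley space.

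First I record the separation step. For each $k \in K$ and each $z \in F \smallsetminus K$ we have $k \nleq z$, because $K$ is an upset and $z \notin K$. The Priestley separation axiom then gives a clopen upset $U_{k,z}$ of $Y$ with $k \in U_{k,z}$ and $z \notin U_{k,z}$. Set
\[
V_k \coloneqq \bigcap_{z \in F \smallsetminus K} U_{k,z},
\]
taking $V_k = Y$ if $F \smallsetminus K$ is empty. This is a finite intersection, so $V_k$ is a clopen upset. It contains $k$ and is disjoint from $F \smallsetminus K$. Now define
\[
V \coloneqq \bigcup_{k \in K} V_k,
\]
which is again a finite union of clopen upsets, so $V \in \ClopUp(Y)$. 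If $K = \varnothing$ the union is empty and $V = \varnothing$ works.

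Next I check that $V \cap F = K$. Since $k \in V_k$ for each $k$, we get $K \subseteq V$, and $K \subseteq F$ by assumption, so $K \subseteq V \cap F$. Conversely, every $V_k$ misses $F \smallsetminus K$, so $V$ misses it too, which gives $V \cap F \subseteq K$.

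The only point that needs care is finiteness. The finiteness of $F$ (equivalently of $K$) is what keeps the intersections and the union finite and hence clopen. This is exactly where étale-finiteness of $X$ is used, through \cref{cor:finiteness}. Beyond that I do not expect a real obstacle.
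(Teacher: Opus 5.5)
Your proof is correct and follows the paper's approach. Both arguments use \cref{cor:finiteness} to make $f^{-1}[{\u}x]$ finite and then separate $K$ from $f^{-1}[{\u}x]\smallsetminus K$ by a clopen upset; the paper does this in one step via the standard Priestley lemma separating the closed upset $K$ from the disjoint closed downset ${\d}(f^{-1}[{\u}x]\smallsetminus K)$, whereas you build $V$ by hand from pointwise separation, with finiteness of $K$ and of $f^{-1}[{\u}x]\smallsetminus K$ doing the work of the compactness argument behind that lemma.
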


\begin{proof}
    Let $(x, K) \in P_f(Y)$.
    The set $K$ is a closed upset of $Y$ contained in $f^{-1}[{\u} x]$.
    The set $f^{-1}[{\u} x]$ is finite by \cref{cor:finiteness}.
    Therefore, $f^{-1}[{\u} x] \smallsetminus K$ is finite and hence closed in $Y$.
    Since $f^{-1}[{\u} x] \smallsetminus K$ is disjoint from the upset $K$, also its down-closure ${\d} (f^{-1}[{\u} x] \smallsetminus K)$ is disjoint from $K$.
    By standard facts on Priestley spaces \cite[Lem.~11.21]{Davey2002-lr}, there is a clopen upset $V$ of $Y$ such that $K \subseteq V$ and $V \cap ({\d} (f^{-1}[{\u} x] \smallsetminus K))=\varnothing$.
    Therefore, $K = V \cap f^{-1}[{\u}x]$.
\end{proof}

\begin{remark}[$\Pi$ and $\Lambda$ of clopen upsets are clopen upsets] \label{r:Pi-Lambda-clopen-upset}
    If $W\subseteq X$ is a clopen upset, then $\Pi(W)$ is clearly a clopen upset.
    If $V\subseteq Y$ is a clopen upset, then $\Lambda(V)$ is clearly clopen by definition of the topology on $P_f(Y)$, and we show that it is an upset. Let $(x,K), (x', K') \in P_f(Y)$ be such that $(x',K') \succeq (x,K) \in \Lambda(V)$.
    Then, by definition, $K=V\cap f^{-1}[\u x]$, $x \leq x'$, and $K' = K \cap f^{-1}[\u x']$.
    Therefore,
    \[
    K' = K \cap f^{-1}[\u x'] = V\cap f^{-1}[\u x] \cap f^{-1}[\u x'] = V \cap f^{-1}[\u x'],
    \]
    i.e., $(x',K') \in \Lambda(V)$.
\end{remark}

From these results, we can deduce the following:

\begin{lemma}[Priestley separation for $P_f(Y)$] \label{lem: Priestley holds for P_f basic topology} 
    Let $f \colon Y \to X$ be a spectral local homeomorphism between Esakia spaces, with $X$ étale-finite.  Then, $(P_{f}(Y), \preceq, \tau_{\Pi, \Lambda})$ satisfies the Priestley separation axiom.
\end{lemma}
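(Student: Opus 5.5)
Given $(x,K)\not\preceq (x',K')$ in $P_f(Y)$, I will produce a set in $\tau_{\Pi,\Lambda}$ that is a clopen upset containing $(x,K)$ but not $(x',K')$. By \cref{r:Pi-Lambda-clopen-upset}, the sets $\Pi(W)$ for $W\in\ClopUp(X)$ and $\Lambda(V)$ for $V\in\ClopUp(Y)$ are clopen upsets in $\tau_{\Pi,\Lambda}$. So it suffices to find one such set containing $(x,K)$ and missing $(x',K')$. By definition of $\preceq$, failure of $(x,K)\preceq(x',K')$ means that either $x\nleq x'$, or $x\le x'$ and $K'\ne K\cap f^{-1}[{\u}x']$. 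I will treat these two cases separately.

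\emph{Case $x\nleq x'$.} Priestley separation in $X$ gives a clopen upset $W$ with $x\in W$ and $x'\notin W$. Then $(x,K)\in\Pi(W)$ and $(x',K')\notin\Pi(W)$, as required.

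\emph{Case $x\le x'$ and $K'\neq K\cap f^{-1}[{\u}x']$.} By \cref{lem: Separation for arbitrary elements} there is a clopen upset $V\subseteq Y$ with $K=V\cap f^{-1}[{\u}x]$, so $(x,K)\in\Lambda(V)$. Since ${\u}x'\subseteq{\u}x$, we get
\[
V\cap f^{-1}[{\u}x']=V\cap f^{-1}[{\u}x]\cap f^{-1}[{\u}x']=K\cap f^{-1}[{\u}x'].
\]
This differs from $K'$, so $(x',K')\notin\Lambda(V)$.

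The only real input is the existence of $V$ in the second case, and this is exactly what \cref{lem: Separation for arbitrary elements} supplies; it is where étale-finiteness, through finiteness of $f^{-1}[{\u}x]$, is used. The remaining step is to check that the $\Lambda(V)$ are upsets, which is already done in \cref{r:Pi-Lambda-clopen-upset}. There is also a small point to verify: $\Lambda(V)$ must be clopen in $\tau_{\Pi,\Lambda}$, and this holds because both $\Lambda(V)$ and its complement are subbasic open sets. I do not expect any obstacle beyond these checks.
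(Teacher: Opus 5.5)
Your proposal is correct and matches the paper's proof essentially step for step. Both split on whether $x\nleq x'$ and separate with $\Pi(W)$ in that case. Otherwise both take a clopen upset $V$ with $K=V\cap f^{-1}[{\uparrow}x]$ and observe that $V\cap f^{-1}[{\uparrow}x']=K\cap f^{-1}[{\uparrow}x']\neq K'$, so $\Lambda(V)$ separates the two points.
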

\begin{proof}
    Assume that $(x,K)\npreceq (x',K')$. 
    If $x \nleq x'$, then there is a clopen upset $V$ of $X$ such that $x\in V$ and $x'\notin V$, and thus
    \[
        (x, K) \in \Pi(V) \text{ and } (x',K')\notin \Pi(V);
    \]
    note that $\Pi(V)$ is a clopen upset by \cref{r:Pi-Lambda-clopen-upset}.
    Otherwise, $x\leq x'$, and so $K' \neq K \cap f^{-1}[{\u}x']$.
    By \cref{lem: Separation for arbitrary elements}, there is a clopen upset $V\subseteq Y$ such that $V \cap f^{-1}[{\u}x] = K$.
    Then, $(x, K) \in \Lambda(V)$ and $(x', K') \notin \Lambda(V)$ (since, otherwise, $K' = V \cap f^{-1}[{\u}x'] = V\cap f^{-1}[{\u}x] \cap f^{-1}[{\u}x'] = K \cap f^{-1}[{\u}x']$, a contradiction); 
    note that $\Lambda(V)$ is a clopen upset by \cref{r:Pi-Lambda-clopen-upset}.
\end{proof}

\begin{lemma}[Compactness of $P_f(Y)$] \label{lem: power object closed subspace}
     Let $f \colon Y \to X$ be a spectral local homeomorphism between Esakia spaces, with $X$ étale-finite.
     The space $(P_{f}(Y),\tau_{\Viet})$ is a closed subspace of $X\times \V(Y)$, and hence it is compact.
\end{lemma}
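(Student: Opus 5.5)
The plan is to show that the complement of $P_f(Y)$ in $X\times\V(Y)$ is open. Compactness then follows, because $X\times\V(Y)$ is a Stone space by \cref{lem: basic facts about vietoris}\eqref{eq: vietoris esakia}. The first step is to describe $P_f(Y)$ as an intersection of closed sets:
\[
P_f(Y)=\{(x,K)\in X\times\V(Y) : K\in\Vu(Y)\}\cap\{(x,K) : K\subseteq f^{-1}[{\u}x]\}.
\]
The first set is $X\times\Vu(Y)$, which is closed by \cref{lem: basic facts about vietoris}\eqref{eq: ordered vietoris esakia}, since $Y$ is Esakia. Note that every $K\in\V(Y)$ with $K\subseteq f^{-1}[{\u}x]$ is finite, hence closed. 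So it only remains to show that the relation $R=\{(x,K) : K\subseteq f^{-1}[{\u}x]\}$ is closed in $X\times\V(Y)$.

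To handle $R$, I will use the étale-finite structure. Fix a continuous strict p-morphism $\acute e\colon X\to X_0$ with $X_0$ finite. By \cref{thm.priest.loc}\eqref{i: Local homeomorphism decomposition}, write $Y=U_1\cup\dots\cup U_n$ with each $f{\restriction}_{U_i}\colon U_i\to f[U_i]$ an order-homeomorphism onto a clopen upset. Refining, I will use the finitely many clopen pieces $S_{i,a}=U_i\cap f^{-1}[\acute e^{-1}(a)]$ for $a\in X_0$.

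The idea is to express the condition ``$y\in f^{-1}[{\u}x]$'' locally. On the fibre $\acute e^{-1}(a)$, for $b\geq a$, the unique-successor map $p_{a,b}\colon \acute e^{-1}(a)\to\acute e^{-1}(b)$ is continuous; this is exactly as in the proof of \cref{lem: priestley etale-finite is Esakia}. Then ${\u}x=\{p_{\acute e(x),b}(x) : b\geq\acute e(x)\}$ is a finite set depending continuously on $x$ within each clopen fibre of $\acute e$. Hence, for $y\in S_{i,c}$ and $x\in\acute e^{-1}(a)$ with $a\le c$, we have $y\geq$-related to $x$ in the required sense, i.e.\ $f(y)\geq x$, if and only if $f(y)=p_{a,c}(x)$. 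This is equivalent to $y=(f{\restriction}_{U_i})^{-1}(p_{a,c}(x))$, provided $p_{a,c}(x)\in f[U_i]$.

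Now take $(x,K)\notin R$, with $x\in\acute e^{-1}(a)$. Then there is $y\in K$ with $f(y)\ngeq x$, and I choose a piece $S_{i,c}$ containing $y$. In the case $c\ngeq a$, the set $\langle S_{i,c}\rangle\times\acute e^{-1}(a)$ is a clopen neighbourhood of $(x,K)$ disjoint from $R$. In the case $c\geq a$, we have $f(y)\neq p_{a,c}(x)$. By Hausdorffness and continuity of $p_{a,c}$ and $f$, I then pick a clopen $W\ni y$ inside $S_{i,c}$ and a clopen $N\ni x$ inside $\acute e^{-1}(a)$ such that $f[W]\cap p_{a,c}[N]=\varnothing$. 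The set $N\times\langle W\rangle$ is then a neighbourhood of $(x,K)$ missing $R$: for any $(x',K')$ in it, some $y'\in K'\cap W$ has $f(y')\neq p_{a,c}(x')$, so $f(y')\ngeq x'$.

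I expect the main obstacle to be the bookkeeping of this last step, namely making sure that ``$f(y)\geq x$'' is detected by a continuous function of $x$. This is where étale-finiteness is genuinely used, and also where continuity of $p_{a,c}$ on the fibres is needed. An alternative route would be to show that $\{(x,y): x\le f(y)\}$ is closed in $X\times Y$ (it is, as the preimage of the closed order relation of $X$) and then use that $\{(x,K):K\subseteq F_x\}$ is closed. However, that needs the map $x\mapsto f^{-1}[{\u}x]$ to be continuous into $\V(Y)$, which requires the same argument anyway.
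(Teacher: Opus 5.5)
Your argument is correct, and its skeleton matches the paper's. You write $P_f(Y)=(X\times\Vu(Y))\cap R$. Then, for $(x,K)\notin R$ witnessed by $y\in K$ with $x\nleq f(y)$, you exhibit a neighbourhood of the form $N\times\langle W\rangle$ missing $R$.

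The difference is in how you obtain $N$ and $W$. The paper just uses Priestley separation in $X$: choose a clopen upset $V$ with $x\in V$ and $f(y)\notin V$. Then $V\times\langle f^{-1}[X\smallsetminus V]\rangle$ works. Any $(z,K')$ in it has some $y'\in K'$ with $f(y')\notin V\ni z$, and so $f(y')\ngeq z$ because $V$ is an upset. This needs only that $X$ is Priestley and $f$ is continuous. \'Etale-finiteness is used solely to make $f^{-1}[\u x]$ finite, so that $P_f(Y)$ lies in $X\times\V(Y)$ at all.

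Your case split over the fibres of $\acute e$, together with the continuity of the unique-successor maps $p_{a,c}$, is valid but buys nothing here. Also, the decomposition $Y=U_1\cup\dots\cup U_n$ is never actually used; the sets $f^{-1}[\acute e^{-1}(c)]$ suffice.

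Two of your side remarks are therefore mistaken. First, this step is not where \'etale-finiteness is ``genuinely used''. Second, your alternative route does not need continuity of $x\mapsto f^{-1}[\u x]$. The relation $\{(x,y) : x\le f(y)\}$ is closed in $X\times Y$, so a violating pair $(x,y)$ with $y\in K$ has a clopen box $A\times B$ around it avoiding that relation. Then $A\times\langle B\rangle$ is a neighbourhood of $(x,K)$ disjoint from $R$, which is essentially the paper's proof.
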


\begin{proof}
    First, note that, by definition, $P_{f}(Y)\subseteq X\times\Vu(Y)$; that is, for each $(x,K) \in P_f(Y)$, the set $K$ is assumed to be an upset. Because $Y$ is an Esakia space, by \cref{lem: basic facts about vietoris}\eqref{eq: ordered vietoris esakia} we have that $\Vu(Y)$ is a closed subset of $\V(Y)$.
    Thus, it suffices to show that for every $(x, K)\in (X \times \Vu(Y)) \smallsetminus P_f(Y)$ there is a neighbourhood containing $(x, K)$ entirely outside of $P_{f}(Y)$.
    
    Let $(x,K)\in (X \times \Vu(Y)) \smallsetminus P_f(Y)$.
    Then, $K\nsubseteq f^{-1}[{\u}x]$, i.e., there is $y\in K$ such that $x\nleq f(y)$. Let $V\subseteq X$ be a clopen upset such that $x\in V$ and $f(y)\notin V$. Consider the set
    \[
        V\times \langle f^{-1}[X \smallsetminus V]\rangle,
    \]
    which is open since it is the product of basic opens.    
    Certainly, $(x,K)$ belongs to this set; moreover, whenever $(z,K')$ belongs to it, we cannot have $K'\subseteq f^{-1}[{\u}z]$. 
    This shows the result.
\end{proof}

For the following two statements, we fix:
\[
    \expl{Y}{Esakia \\space} \expl{\xlongrightarrow{\quad\quad\quad\quad\quad\quad f \quad\quad\quad\quad\quad\quad}}{spectral \\local\\ homeomorphism} \expl{X}{étale-finite \\Esakia \\space} \expl{\xlongrightarrow{\quad\quad\quad\quad\quad\quad \acute{e} \quad\quad\quad\quad\quad\quad}}{continuous \\strict \\p-morphism} \expl{X_{0}}{finite\\ poset}
\]
Moreover, by the implication \eqref{i:spectral-local}$\Rightarrow$\eqref{i: Local homeomorphism decomposition} in \cref{thm.priest.loc}, we fix a decomposition
\[
Y=U_{1}\cup{\dots}\cup U_{n}
\]
of $Y$, where, for each $i \in \{1, \dots, n\}$, $U_{i}$ is a clopen upset of $Y$, and $f{\restriction}_{U_{i}}\colon U_{i}\to f[U_{i}]$ is an order-homeomorphism.

For each $n$-tuple $(T_{1},\dots,T_{n})$ of upsets of $X_0$ such that there is $x_{0}\in X_{0}$ with $T_{i}\subseteq {\u}x_{0}$ for all $i\in \{1,\dots,n\}$, consider the following subsets of $P_{f}(Y)$:
\[
    \Typ(T_{1},\dots,T_{n})=\{(x,K)\in P_f(Y) : \acute{e}f[K\cap U_i]=T_{i}\text{ for all }i\in \{1,\dots,n\}\}.
\]

\begin{lemma}[$\Typ$ is clopen]\label{lem: clopenness of the type sets}
     Let $f \colon Y \to X$ be a spectral local homeomorphism between Esakia spaces, with $X$ étale-finite.
     For each $n$-tuple $(T_{1},\dots,T_{n})$ of upsets as above, the set $\Typ(T_{1},\dots,T_{n})$ is clopen in $(P_{f}(Y),\tau_{\Viet})$.
\end{lemma}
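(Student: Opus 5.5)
The plan is to write $\Typ(T_{1},\dots,T_{n})$ directly as the trace on $P_f(Y)$ of a finite Boolean combination of subbasic clopens of $X\times\V(Y)$. In the subspace topology $\tau_{\Viet}$, such a trace is clopen.

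For $i\in\{1,\dots,n\}$ and $t\in X_0$, set
\[
W_{i,t}\coloneqq U_i\cap (\acute{e}f)^{-1}[\{t\}]\subseteq Y.
\]
This set is clopen in $Y$. Indeed, $U_i$ is clopen by the choice of the decomposition. The composite $\acute{e}f$ is continuous, because $f$ is continuous for the Priestley topologies (\cref{thm.priest.loc}) and $\acute{e}$ is continuous. Finally, $\{t\}$ is clopen in the finite discrete space $X_0$. Consequently, in $\V(Y)$ the sets $\langle W_{i,t}\rangle$ and $\tru{Y\smallsetminus W_{i,t}}$ are subbasic clopens.

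Next I unwind the defining condition $\acute{e}f[K\cap U_i]=T_i$ pointwise in $X_0$. A point $t\in X_0$ lies in $\acute{e}f[K\cap U_i]$ iff $K\cap W_{i,t}\neq\varnothing$, i.e.\ iff $K\in\langle W_{i,t}\rangle$. Its absence means $K\subseteq Y\smallsetminus W_{i,t}$, i.e.\ $K\in\tru{Y\smallsetminus W_{i,t}}$. Hence
\[
\Typ(T_{1},\dots,T_{n}) \;=\; P_f(Y)\cap\Bigl(X\times \bigcap_{i=1}^{n}\Bigl(\bigcap_{t\in T_i}\langle W_{i,t}\rangle\;\cap\;\bigcap_{t\in X_0\smallsetminus T_i}\tru{Y\smallsetminus W_{i,t}}\Bigr)\Bigr).
\]
Since $X_0$ is finite and $n$ is finite, this is a finite intersection of clopens of $X\times\V(Y)$, hence clopen. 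Its trace on $P_f(Y)$ is therefore clopen in $(P_f(Y),\tau_{\Viet})$.

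I do not expect a genuine obstacle; the only points to check carefully are the following.
\begin{enumerate}
\item The Vietoris subbasic sets $\langle W\rangle$ and $\tru{W}$ are indeed clopen, not merely open, when $W$ is clopen. This holds because $\V(Y)\smallsetminus\langle W\rangle=\tru{Y\smallsetminus W}$.
\item Finiteness of $X_0$ is what makes the intersection finite. Without it, one would only obtain a closed set.
\end{enumerate}

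As a sanity check, and for later use, I would also note that the sets $\Typ(T_1,\dots,T_n)$ cover $P_f(Y)$. For $(x,K)\in P_f(Y)$, each $\acute{e}f[K\cap U_i]$ is an upset of $X_0$, since $K\cap U_i$ is an upset and $f{\restriction}_{U_i}$ and $\acute{e}$ are p-morphisms (\cref{rem: remark on p-morphisms}). Moreover, it is contained in ${\u}\acute{e}(x)$, because $K\subseteq f^{-1}[{\u}x]$. So the sets $\Typ(T_1,\dots,T_n)$ form a finite clopen partition of $P_f(Y)$, although this is not needed for the lemma itself.
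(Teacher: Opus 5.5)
Your proof is correct. It is essentially the paper's argument with the continuity step carried out by hand.

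The paper defines $\alpha_i\colon P_f(Y)\to\V(X_0)$, $(x,K)\mapsto \acute{e}f[K\cap U_i]$, and gets its continuity by factoring it as $\pi_{\V(Y)}$, then $r_{U_i}$, then $\acute{e}f[-]$. This relies on the cited continuity of direct-image maps between Vietoris spaces and of the restriction maps. The paper then reads off $\Typ(T_1,\dots,T_n)=\bigcap_i\alpha_i^{-1}[\{T_i\}]$ using that $\V(X_0)$ is finite and discrete.

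Your clopens $\langle W_{i,t}\rangle$ and $\tru{Y\smallsetminus W_{i,t}}$ are exactly what $\alpha_i^{-1}[\{T_i\}]$ unwinds to. So your version is more self-contained, needing no appeal to functoriality of $\V$, while the paper's is more modular.
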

\begin{proof}    
    Since $\acute{e} f \colon Y\to X_{0}$ is a continuous p-morphism, by \cref{lem: basic facts about vietoris} we have a continuous map:
    \begin{align*}
        \acute{e}f[-]\colon  \V(Y)&\longrightarrow \V(X_{0})\\
        C &\longmapsto (\acute{e}f)[C].
    \end{align*}
    Moreover, by \cref{lem: basic facts about vietoris}\eqref{eq: restriction maps are continuous}, for each $i\in \{1,\dots,n\}$ the map $r_{U_{i}}\colon \V(Y)\to \V(Y)$ sending $C$ to $C\cap U_{i}$ is continuous.
    
    For each $i \in \{1, \dots, n\}$, let $\alpha_{i}$ be defined by
    \begin{align*}
        \alpha_{i}\colon P_{f}(Y) & \longrightarrow \V(X_{0})\\
        (x,K) &\longmapsto \acute{e}f[K\cap U_{i}];
    \end{align*}
    note that $\alpha_{i}$ is continuous, since it can be factored as the composite of the following morphisms:
    \[
    P_f(Y) \xrightarrow{\pi_{\V(Y)}} \V(Y) \xrightarrow{r_{U_i}} \V(Y) \xrightarrow{\acute{e}f[-]} \V(X_0)
    \]

    Moreover, note that
    \begin{equation*}
        \Typ(T_{1},\dots,T_{n}) = \bigcap_{i=1}^{n}\alpha_{i}^{-1}[\{T_{i}\}];
    \end{equation*}
    indeed, given any $(x,K)\in P_{f}(Y)$, we have $\alpha_{i}(x,K)=(\acute{e}f[-])r_{U_{i}}\pi_{\V(Y)}(x,K)=\acute{e}f[K\cap U_{i}]$. This coincides with the definition of $\Typ(T_{1}, \dots, T_{n})$, and, since $\{T_{i}\}$ is clopen in $\V(X_{0})$ (since the latter is discrete), we have that $\Typ(T_{1}, \dots, T_{n})$ is clopen, as desired.
\end{proof}

Using this lemma, we can prove the equality of the topologies:

\begin{theorem}[The two topologies on $P_f(Y)$ coincide]\label{thm: Identification with Vietoris}
    $\tau_{\Pi,\Lambda} = \tau_{\Viet}$.
\end{theorem}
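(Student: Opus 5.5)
The plan is to prove only the inclusion $\tau_{\Pi,\Lambda}\subseteq\tau_{\Viet}$ and then close the argument with a compact–Hausdorff argument. By \cref{lem: power object closed subspace}, $(P_f(Y),\tau_{\Viet})$ is compact. By \cref{lem: Priestley holds for P_f basic topology}, $(P_f(Y),\preceq,\tau_{\Pi,\Lambda})$ satisfies Priestley separation by $\tau_{\Pi,\Lambda}$-clopen upsets, so it is Hausdorff: two distinct points are incomparable in at least one direction, and a clopen upset together with its complement separates them. Hence, once the inclusion is known, the identity $(P_f(Y),\tau_{\Viet})\to(P_f(Y),\tau_{\Pi,\Lambda})$ is a continuous bijection from a compact space to a Hausdorff one, and therefore a homeomorphism.

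For the inclusion it suffices to treat the subbasic sets of $\tau_{\Pi,\Lambda}$. First, $\Pi(W)=P_f(Y)\cap(W\times\V(Y))$ is clearly $\tau_{\Viet}$-open. Since $P_f(Y)\smallsetminus\Lambda(V)$ is also subbasic, the remaining task is to show that each $\Lambda(V)$, with $V$ a clopen upset of $Y$, is $\tau_{\Viet}$-\emph{clopen}.

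To handle $\Lambda(V)$, I would use the fixed decomposition $Y=U_1\cup\dots\cup U_n$. Because $f{\restriction}_{U_i}$ is injective and $K\cap U_i\subseteq f^{-1}[{\u}x]$, the condition $K=V\cap f^{-1}[{\u}x]$ is equivalent to requiring, for every $i$,
\[
f[K\cap U_i]=f[V\cap U_i]\cap{\u}x .
\]
Both sides of this equation are subsets of ${\u}x$, and $\acute{e}$ is injective on ${\u}x$ by strictness (\cref{rem: external characterisation of etale maps}). So the equation holds if and only if $\acute{e}f[K\cap U_i]=\acute{e}[A_i\cap{\u}x]$, where $A_i\coloneqq f[V\cap U_i]$ is a clopen upset of $X$. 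The key claim is that, for a clopen upset $A$ of $X$, the map $x\mapsto\acute{e}[A\cap{\u}x]$ from $X$ to the finite set of upsets of $X_0$ is locally constant.

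To prove the claim, fix $x_0\in X_0$ and restrict to the clopen fibre $S_{x_0}$. For each $y_0\ge x_0$, we have $y_0\in\acute{e}[A\cap{\u}x]$ if and only if $p_{x_0,y_0}(x)\in A$, where $p_{x_0,y_0}$ is the continuous map from the proof of \cref{lem: priestley etale-finite is esakia}. Each such condition therefore carves out a clopen subset of $S_{x_0}$, which gives the claim.

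It then follows that $\Lambda(V)$ is a finite union, over $x_0\in X_0$ and over tuples $(T_1,\dots,T_n)$, of sets of the form
\[
\Typ(T_1,\dots,T_n)\cap\Pi(C_{x_0,T}),
\]
where $C_{x_0,T}$ is the clopen set of those $x\in S_{x_0}$ with $\acute{e}[A_i\cap{\u}x]=T_i$ for all $i$. Each such set is $\tau_{\Viet}$-clopen by \cref{lem: clopenness of the type sets}, so $\Lambda(V)$ is $\tau_{\Viet}$-clopen.

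I expect the main obstacle to be exactly this step: $\Lambda(V)$ couples $K$ with $x$, and making the dependence on $x$ locally constant requires both the strictness of $\acute{e}$ and the continuity of the maps $p_{x_0,y_0}$. Everything else is formal.
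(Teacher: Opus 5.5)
Your argument is correct, and its skeleton is the paper's: the compact-to-Hausdorff argument, $\Pi(W)$ trivially $\tau_{\Viet}$-clopen, and $\Lambda(V)$ written as a finite union of $\Typ$-sets cut down by $\Pi$ of clopens of $X$. The difference is in how the condition on $x$ is made clopen. The paper stays in $Y$ and writes
\[
\Lambda(V)=\bigcup_{(T_1,\dots,T_n)\in\Types(V)}\Typ(T_1,\dots,T_n)\cap\pi^{-1}[\tru{V}]\cap\bigcap_{i=1}^{n}\Pi\bigl(\Box\forall_{f}[V_i\climp f^{-1}\acute{e}^{-1}[T_i]]\bigr).
\]
It imposes on $x$ only the one-sided condition $\acute{e}f[V_i\cap f^{-1}[{\u}x]]\subseteq T_i$, which is clopen because $f$ is open and closed and $X$ is Esakia. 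The inclusion $K\subseteq V$ comes from the Vietoris set $\tru{V}$, and equality is then recovered pointwise from strictness of $\acute{e}$ and injectivity of $f$ on $U_i$.

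You instead use those two injectivity facts up front. This rewrites membership in $\Lambda(V)$ as the exact equations $\acute{e}f[K\cap U_i]=\acute{e}[A_i\cap{\u}x]$, with $A_i=f[V\cap U_i]$ clopen in $X$. You then prove that $x\mapsto\acute{e}[A\cap{\u}x]$ is locally constant.

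Your route gives a more symmetric decomposition: no $\tru{V}$, no $\forall_f$ or $\Box$, and the index set is simply all pairs $(x_0,T)$, with $C_{x_0,T}$ possibly empty. It also isolates a reusable fact in the spirit of \cref{p:finite-principal-upset-types}. The paper's route skips the reformulation and relies only on the generic clopenness of $\Box$ and $\forall_f$.

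You also need not reach into the proof of \cref{lem: priestley etale-finite is esakia}. Since $X$ is Esakia, the set $\{x\in S_{x_0} : y_0\in\acute{e}[A\cap{\u}x]\}=S_{x_0}\cap{\d}(A\cap S_{y_0})$ is clopen directly.
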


\begin{proof}
    First note that by \cref{lem: power object closed subspace} the topology $\tau_{\Viet}$ is compact, and by \cref{lem: Priestley holds for P_f basic topology} the topology $\tau_{\Pi, \Lambda}$ is Hausdorff. Therefore, if the inclusion $\tau_{\Pi,\Lambda}\subseteq\tau_{\Viet}$ holds, then the identity map $\id \colon  (P_{f}(Y),\tau_{\Viet})\to (P_{f}(Y),\tau_{\Pi,\Lambda})$ is a continuous bijection from a compact space to a Hausdorff space, and so it is a homeomorphism. Thus, it suffices to establish this inclusion.
    
    To show $\tau_{\Pi,\Lambda} \subseteq \tau_{\Viet}$, we will show (i) that every set of the form $\Pi(W)$ for $W$ a clopen of $X$ is clopen in the topology $\tau_\Viet$, and (ii) that every set of the form $\Lambda(V)$ for $V$ a clopen upset of $Y$ is clopen in the topology $\tau_{\Viet}$ (and so is its complement $P_f(Y) \smallsetminus \Lambda(V)$, too). 
    
    For every clopen $W$ of $X$, we have 
    \[
    \Pi(W) = \{(x,K)\in P_{f}(Y) : x\in W\} = P_f(Y) \cap (W \times \V(Y)),
    \]
    which is clearly clopen in $\tau_\Viet$.
    
    Now fix a clopen upset $V\subseteq Y$ and, for each $i \in \{1, \dots, n\}$, set $V_i \coloneqq V\cap U_i$. Consider:
    \[
        \Types(V) \coloneqq \{(T_{1},\dots,T_{n}) : \exists x\in X \text{ s.t.\ } \forall i\in\{1,\dots,n\},\,T_{i}\subseteq {\u}\acute{e}(x),\, \acute{e}f[V_{i}\cap f^{-1}[{\u}x]]=T_{i}\}.
    \]
    Note also that, for all $x \in X$ and for all subsets $W$ and $W'$ of $Y$, we have
    \begin{equation} \label{eq:iff-Box}
        W\cap f^{-1}[{\u}x]\subseteq W' \text{ iff } f^{-1} [{\u}x]\subseteq (W \Rightarrow W') \text{ iff } x\in \Box\forall_{f}[W\climp W'].
    \end{equation}

    Now, letting $\pi \colon P_f(Y) \to \V(Y)$ denote the restriction of the projection function $X \times \V(Y) \to \V(Y)$, we show:
    \begin{equation} \label{eq:lambda-eq}
        \Lambda(V)=\bigcup_{(T_{1},\dots,T_{n})\in \Types(V)}\Typ(T_{1},\dots,T_{n})\cap \pi^{-1}[\llbracket V \rrbracket] \cap \bigcap_{i=1}^{n}\Pi(\Box\forall_{f}[V_{i}\climp f^{-1}\acute{e}^{-1}[T_{i}]]).
    \end{equation}
    By \cref{lem: clopenness of the type sets}, the sets of the form $\Typ(T_{1}, \dots, T_{n})$ appearing in this finite union are clopen in $\tau_{\Viet}$.
    Moreover, the sets of the form $\forall_{f}[V_{i}\climp f^{-1}\acute{e}^{-1}[T_{i}]]$ are clopen because $f$ is closed and open; since $X$ is Esakia, it follows that also $\Box\forall_{f}[V_{i}\climp f^{-1}\acute{e}^{-1}[T_{i}]]$ is clopen.
    Therefore, the set on the right-hand side of \eqref{eq:lambda-eq} is clopen in $\tau_{\Viet}$. 
    
    To prove \eqref{eq:lambda-eq}, we first prove the inclusion $\supseteq$.
    Let $(T_1, \dots, T_n) \in \Types(V)$ and let $(x,K)\in \Typ(T_{1},\dots,T_{n})\cap \pi^{-1}[\llbracket V\rrbracket] \cap \bigcap_{i=1}^{n}\Pi(\Box\forall_{f}[V_{i}\climp f^{-1}\acute{e}^{-1}[T_{i}]])$. We need to show that
    \[
        K=V\cap f^{-1}[{\u}x].
    \]
    Since $Y = U_1 \cup \dots \cup U_n$, for this it suffices to show that, for each $i \in \{1, \dots, n\}$, upon setting $K_{i}\coloneqq K\cap U_{i}$, we have $K_{i}=V_{i}\cap f^{-1}[{\u}x]$.
    Let $i \in \{1, \dots, n\}$.
    The inclusion $K_{i}\subseteq V_{i}\cap f^{-1}[{\u}x]$ is immediate because $(x,K) \in \pi^{-1}[\llbracket V \rrbracket]$, $K_i = K \cap U_i \subseteq U_i$ and $K_{i}\subseteq f^{-1}[{\u}x]$.
    To prove the inclusion $V_{i}\cap f^{-1}[{\u}x] \subseteq K_i$, let $w\in V_{i}\cap f^{-1}[{\u}x]$. Since $x\in \Box\forall_{f}[V_{i}\climp f^{-1}\acute{e}^{-1}[T_{i}]]$, by \eqref{eq:iff-Box} we have $V_{i}\cap f^{-1}[{\u}x]\subseteq f^{-1}\acute{e}^{-1}[T_{i}]$, and so $\acute{e}f[V_{i}\cap f^{-1}[{\u}x]]\subseteq T_{i}=\acute{e}f[K_{i}]$ (since $(x,K)\in \Typ(T_{1},\dots,T_{n})$). Hence, there is $w'\in K_{i}$ such that $\acute{e}f(w)=\acute{e}f(w')$. Since $w,w'\in V\cap f^{-1}[{\u}x]$, it follows that $x\leq f(w),f(w')$; since $\acute{e}$ is a strict p-morphism and $f(w),f(w') \in \u x$, from $\acute{e}f(w)=\acute{e}f(w')$ we deduce $f(w)=f(w')$. Since $w,w'\in U_{i}$, and $f$ is an order-homeomorphism on this component, we have $w=w'$, i.e., $w\in K_{i}$, as desired.
    
    Conversely, assume that $(x,K)\in \Lambda(V)$, i.e., $K = V \cap f^{-1}[\u x]$.
    For each $i \in \{1, \dots, n\}$, set $K_i \coloneqq K \cap U_i$ and $T_{i} \coloneqq \acute{e}f[K_{i}]$.
    Then, $(T_{1},\dots,T_{n})\in \Types(V)$. 
    Clearly, $(x,K) \in \Typ(T_1, \dots, T_n)$ and $(x,K) \in \pi^{-1}[\llbracket V \rrbracket]$.
    Moreover, for every $i \in \{1, \dots, n\}$,
    \[
        V_{i}\cap f^{-1}[{\u}x] = U_i \cap V \cap f^{-1}[{\u}x] \expl{=}{$(x,K) \in \Lambda(V)$} U_i \cap K = K_{i} \expl{\subseteq}{$\acute{e}f[K_{i}]=T_{i}$}f^{-1}\acute{e}^{-1}[T_{i}],
    \]
    and so, by \eqref{eq:iff-Box}, $x\in \Box\forall_{f}[V_{i}\climp f^{-1}\acute{e}^{-1}[T_{i}]]$.
\end{proof}

In light of \cref{thm: Identification with Vietoris}, we will simply write $(P_{f}(Y),\preceq,\tau)$ to refer to the topology on $P_f(Y)$. We conclude with the following:

\begin{theorem}[$P_f(Y)$ is an Esakia étale space]\label{t:spectralochomeoforpower}
     Let $f \colon Y \to X$ be a spectral local homeomorphism between Esakia spaces, with $X$ étale-finite.
     Then $(P_{f}(Y),\preceq,\tau)$ is an Esakia space, and the map $p \colon P_{f}(Y)\to X$ is a spectral local homeomorphism.
\end{theorem}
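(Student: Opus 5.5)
First I will establish that $(P_f(Y),\preceq,\tau)$ is a Priestley space. By \cref{thm: Identification with Vietoris} the topologies $\tau_{\Viet}$ and $\tau_{\Pi,\Lambda}$ agree. Compactness then follows from \cref{lem: power object closed subspace}, and the Priestley separation axiom from \cref{lem: Priestley holds for P_f basic topology}.

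Before that, I need $\preceq$ to be a partial order, which I will check directly.
\begin{itemize}
\item Reflexivity: since $K\subseteq f^{-1}[{\u}x]$, we have $K=K\cap f^{-1}[{\u}x]$.
\item Antisymmetry: $x\le x'\le x$ forces $x=x'$, and then $K'=K\cap f^{-1}[{\u}x]=K$.
\item Transitivity: ${\u}x''\subseteq{\u}x'$ gives $K''=K'\cap f^{-1}[{\u}x'']=K\cap f^{-1}[{\u}x'']$.
\end{itemize}

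Next I will show that $p$ is a spectral local homeomorphism, using criterion \eqref{i:nhb-spectral-lh-versus-strict-pm2} of \cref{thm.priest.loc}. Order-preservation of $p$ is immediate. For strictness, suppose $(x,K)\in P_f(Y)$ and $x\le x'$. Then $(x',K\cap f^{-1}[{\u}x'])$ lies above $(x,K)$ and maps to $x'$, and the definition of $\preceq$ forces any element above $(x,K)$ with first coordinate $x'$ to be exactly this one.

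It remains to see that $p$ is a local homeomorphism for the Priestley topologies. Fix $(x,K)$ and use \cref{lem: Separation for arbitrary elements} to choose a clopen upset $V\subseteq Y$ with $K=V\cap f^{-1}[{\u}x]$. Then $N\coloneqq\Lambda(V)$ is a clopen neighbourhood of $(x,K)$ in $\tau_{\Pi,\Lambda}$.
\begin{itemize}
\item On $N$ the second coordinate is determined by the first, so $p{\restriction}_N$ is injective.
\item $p[N]=X$, since for each $z\in X$ the pair $(z,V\cap f^{-1}[{\u}z])$ belongs to $P_f(Y)$ and to $N$.
\item $p{\restriction}_N$ is a continuous bijection from the compact space $N$ (clopen in a compact space) onto the Hausdorff space $X$, hence a homeomorphism.
\end{itemize}
The sets $\Lambda(V)$ therefore cover $P_f(Y)$ by clopen sets each mapped homeomorphically onto the open set $X$, so $p$ is a local homeomorphism; in particular it is continuous. \Cref{thm.priest.loc} then gives that $p$ is a spectral local homeomorphism.

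Finally, \cref{prop: closure of Esakia under spectral}, applied to $p$ with $X$ Esakia, shows that $P_f(Y)$ is an Esakia space. I expect the only delicate point to be having both topologies available at once: compactness comes from the Vietoris description, while the clopenness of $\Lambda(V)$ and the separation come from $\tau_{\Pi,\Lambda}$. This is exactly what \cref{thm: Identification with Vietoris} supplies, so once it is granted the remaining steps are routine.
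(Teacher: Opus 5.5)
Your proof is correct and follows the paper's argument. The Priestley property comes from the coincidence $\tau_{\Pi,\Lambda}=\tau_{\Viet}$, the clopen sets $\Lambda(V)$ are mapped bijectively onto $X$ (a homeomorphism by the compact--Hausdorff argument), and \cref{prop: closure of Esakia under spectral} gives the Esakia property.

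The only variation is the final step. You conclude via criterion \eqref{i:nhb-spectral-lh-versus-strict-pm2} of \cref{thm.priest.loc} (global strictness of $p$), whereas the paper checks order-reflection on $\Lambda(V)$ and applies criterion \eqref{i:in-terms-of-Priestley} directly, which avoids the heavier implication through \cref{lem:local-inj-neighbourhood-closed-set}. One small fix: state continuity of $p$ (e.g.\ $p^{-1}[W]=\Pi(W)$) before you use it for $p{\restriction}_N$, not after.
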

\begin{proof}
    Because of \cref{lem: Priestley holds for P_f basic topology} we know that $(P_{f}(Y),\preceq,\tau_{\Pi,\Lambda})$ satisfies the Priestley separation axiom, and by \cref{lem: power object closed subspace}, that $(P_{f}(Y),\tau_{\Viet})$ is compact. By \cref{thm: Identification with Vietoris}, we know that $\tau_{\Pi,\Lambda}=\tau_{\Viet}$, and so $(P_{f}(Y),\preceq,\tau)$ is a Priestley space.
    
    We now show that $p$ is a spectral local homeomorphism. From this and \cref{prop: closure of Esakia under spectral} it will follow that $(P_{f}(Y),\preceq)$ is an Esakia space.
    
    By definition, $p$ is order-preserving, and it is continuous, since it is the restriction of the projection $\pi_{X}\colon X\times \mathcal{V}(Y)\to X$, which is continuous.

    Let $(x,K)\in P_{f}(Y)$ be arbitrary. By \cref{lem: Separation for arbitrary elements}, let $V$ be a clopen upset such that $V\cap f^{-1}[{\u} x] = K$. The set $\Lambda(V)$ is a clopen upset by \cref{r:Pi-Lambda-clopen-upset}.
    Then $(x,K)\in\Lambda(V)$, and note that $p[\Lambda(V)]=X$ because for each $x' \in X$ the element $(x', V \cap f^{-1}[\u x'])$ is mapped to $x'$ by $p$. Moreover, $p{\restriction}_{\Lambda(V)}$ is order-reflecting: if $p(x,K)\leq p(x',K')$ then $x\leq x'$ and
    \begin{equation*}
        K' = V \cap f^{-1}[{\u}x'] = V \cap f^{-1}[{\u}x] \cap f^{-1}[{\u}x'] = K \cap f^{-1}[{\u}x'],
    \end{equation*}
    and so by definition, $(x,K)\preceq (x',K')$. Thus, the function $p{\restriction}_{\Lambda(V)} \colon \Lambda(V) \to X$ is a continuous bijection between compact Hausdorff spaces, and hence a homeomorphism; since it is order-preserving and order-reflecting, it is an order-homeomorphism.

    We thus conclude that the set $p[\Lambda(V)] = X$ is a clopen upset, and $p{\restriction}_{\Lambda(V)}$ is an order-homeomorphism.   
    By the implication \eqref{i:in-terms-of-Priestley}$\Rightarrow$\eqref{i:spectral-local} in \cref{thm.priest.loc}, the map $p$ is a spectral local homeomorphism as desired.
\end{proof}

In light of \cref{t:spectralochomeoforpower}, we have that, for $X$ an étale-finite Esakia space and $f\colon Y\to X$ a spectral local homeomorphism, $(P_{f}(Y),p)$ is an object in $\EsaEt(X)$. We now show that it is indeed the power object of $f\colon Y\to X$.

\begin{definition}[$\in_{Y}$] \label{def: definition of the product in power object}
    By \cref{t:the-triangle-property-is-not-that-bad}\eqref{eq: binary product}, the product of $(Y,f)$ and $(P_{f}(Y),p)$ in $\EsaEt(X)$ is
    \[
        Y\times_{X}P_{f}(Y) = \{(y,(x,K)) \in Y \times P_{f}(Y) : f(y)=x\},
    \]
    together with the composite $Y \times_X P_f(Y) \xrightarrow{\pi_Y} Y \xrightarrow{f} X$.
    We define its subset
    \[
        {\in_{Y}} \coloneqq \{(y,(x,K))\in Y\times_{X}P_{f}(Y) : y\in K\}.
    \]
\end{definition}

\begin{lemma}[$\in_{Y}$ is a clopen upset]
    Let $f \colon Y \to X$ be a spectral local homeomorphism between Esakia spaces, with $X$ étale-finite.
    The set $\in_{Y}$ is a clopen upset of $Y\times_{X}P_{f}(Y)$.
\end{lemma}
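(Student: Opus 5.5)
We must show that $\in_Y$ is an upset, and that it is both closed and open in $Y\times_X P_f(Y)$, which carries the subspace topology and order from $Y\times P_f(Y)$.

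\emph{Upset.} Let $(y,(x,K))\in{\in_Y}$ and suppose $(y,(x,K))\le (y',(x',K'))$ in $Y\times_X P_f(Y)$. Then $y\le y'$ and $(x,K)\preceq(x',K')$, so $K'=K\cap f^{-1}[{\u}x']$. Since $K$ is an upset, $y'\in K$. Since $f(y')=x'$, we also have $y'\in f^{-1}[{\u}x']$. Hence $y'\in K'$.

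\emph{Clopen.} The plan is to cover $\in_Y$ by basic clopens coming from the sets $\Lambda(V)$. Each $(x,K)\in P_f(Y)$ lies in some $\Lambda(V)$ with $V$ a clopen upset of $Y$ (\cref{lem: Separation for arbitrary elements}). For such $(x,K)$ and any $y$ with $f(y)=x$, we have $y\in f^{-1}[{\u}x]$, so $y\in K$ if and only if $y\in V$. Therefore
\[
{\in_Y}\cap (Y\times\Lambda(V)) = (V\times\Lambda(V))\cap (Y\times_X P_f(Y)),
\]
which is clopen in the fibre product, since $V$ is clopen in $Y$ and $\Lambda(V)$ is clopen in $P_f(Y)$.

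It then suffices to show that finitely many sets $\Lambda(V)$ cover $P_f(Y)$. The $\Lambda(V)$ are open and cover $P_f(Y)$, and $P_f(Y)$ is compact (\cref{t:spectralochomeoforpower}). So there are clopen upsets $V_1,\dots,V_m$ with $P_f(Y)=\Lambda(V_1)\cup\dots\cup\Lambda(V_m)$.

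The main obstacle is that these $\Lambda(V_j)$ may overlap, so $\in_Y$ is not immediately a disjoint union of the pieces above. We handle this by disjointifying: set $D_j\coloneqq\Lambda(V_j)\smallsetminus(\Lambda(V_1)\cup\dots\cup\Lambda(V_{j-1}))$. Each $D_j$ is clopen, and the $D_j$ partition $P_f(Y)$. Then
\[
{\in_Y}=\bigcup_{j=1}^m (V_j\times D_j)\cap(Y\times_X P_f(Y)),
\]
which is a finite union of clopen sets, and hence clopen.

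(Any overlap-based version also works, since on $\Lambda(V_j)\cap\Lambda(V_k)$ the descriptions via $V_j$ and via $V_k$ agree on the fibres.)
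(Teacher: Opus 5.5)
Your proof is correct and uses the same key observation as the paper's: for $(x,K)\in\Lambda(V)$ and $f(y)=x$ one has $y\in K$ iff $y\in V$, so $V\times_X\Lambda(V)$ and $(Y\smallsetminus V)\times_X\Lambda(V)$ show that $\in_Y$ and its complement are open. The only difference is that the paper applies this pointwise to get neighbourhoods directly, whereas you use compactness of $P_f(Y)$ and a finite disjointified cover. That detour is harmless but unnecessary: the sets $Y\times_X\Lambda(V)$ already form an open cover of $Y\times_X P_f(Y)$, and $\in_Y$ is clopen in each member of it, which suffices.
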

\begin{proof}
    We first show that $\in_Y$ is an upset. 
    Let $(y,(x,K))$ be an element of ${\in_Y}$, and let $(y',(x',K')) \in Y\times_{X}P_{f}(Y)$ be such that $(y,(x,K)) \leq (y',(x',K'))$. Then $y\leq y'$ and $(x,K)\preceq (x',K')$; since $K$ is an upset and $y\in K$, it follows that $y'\in K$.
    Since
    \[
        K'=K\cap f^{-1}[{\u}x'],
    \]
    and $f(y')=x'$, we have $y'\in K\cap f^{-1}[{\u}x']$, and so $y'\in K'$.

    Let us now prove that $\in_Y$ is open.
    Let $(y,(x,K))$ be an element of ${\in_Y}$, and let us find an open neighbourhood (in $Y\times_{X}P_{f}(Y)$) of $(y,(x,K))$ contained in $\in_Y$.
    By \cref{lem: Separation for arbitrary elements}, there is a clopen upset $V$ of $Y$ such that $K=V\cap f^{-1}[{\u}x]$.
    We claim that the open set
    \[
        V\times_X \Lambda(V)
    \]
    contains the element $(y,(x,K))$ and is a subset of $\in_Y$.
    Since $y \in K = V \cap f^{-1}[{\u}x]$, we have $y \in V$.
    Moreover, $(x,K)$ belongs to $\Lambda(V)$ by definition of $\Lambda(V)$.
    Therefore, $(y,(x,K)) \in V\times_X \Lambda(V)$.
    We now prove that $V\times_X \Lambda(V)$ is a subset of $\in_{Y}$.
    Let $(y', (x', K')) \in V \times_X \Lambda(V)$.
    Since $f(y') = x'$ (and hence $y' \in f^{-1}[x'] \subseteq f^{-1}[{\u}x']$) and $y' \in V$, we have $y' \in V \cap f^{-1}[{\u}x'] = K'$. Therefore, $(y', (x', K'))$ belongs to $\in_Y$. This proves that $V \times_X \Lambda(V)$ is a subset of $\in_{Y}$.

    It remains to show that $\in_Y$ is closed.
    So, let 
    \[
    (y,(x,K)) \in (Y\times_{X}P_{f}(Y)) \smallsetminus {\in_Y},
    \]
    and let us find an open neighbourhood (in $Y\times_{X}P_{f}(Y)$) of $(y,(x,K))$ disjoint from $\in_Y$.
    By \cref{lem: Separation for arbitrary elements}, there is a clopen upset $V$ of $Y$ such that
    \[
    K=V\cap f^{-1}[{\u}x].
    \]
    We claim that the open set
    \[
        (Y \smallsetminus V)\times_X \Lambda(V)
    \]
    contains $(y,(x,K))$ and is disjoint from $\in_Y$.
    Let us first prove that it contains $(y, (x, K))$. Recall that $K = V \cap f^{-1}[{\u}x]$, that $f(y)=x$ (and hence $y \in f^{-1}[x] \subseteq f^{-1}[{\u}x]$) and that $y \notin K$.
    Thus, $y \notin V$.
    Moreover, $(x,K)$ belongs to $\Lambda(V)$ by definition of $\Lambda(V)$.
    Therefore, $(y,(x,K)) \in (Y \smallsetminus V)\times_X \Lambda(V)$.
    Let us now prove that $(Y \smallsetminus V)\times_X \Lambda(V)$ is disjoint from $\in_{Y}$.
    Let $(y', (x', K')) \in (Y \smallsetminus V)\times_X \Lambda(V)$.
    Since $f(y') = x'$ (and hence $y' \in f^{-1}[x'] \subseteq f^{-1}[{\u}x']$), we have $y' \notin V \cap f^{-1}[{\u}x'] = K'$. Therefore, $(y', (x', K'))$ does not belong to $\in_Y$. This proves that $(Y \smallsetminus V)\times_X \Lambda(V)$ is disjoint from $\in_{Y}$. This proves our claim, and hence $\in_Y$ is closed.
    
    So, $\in_{Y}$ is clopen, as desired.
\end{proof}

Consequently, $\in_{Y}$ (with the composite map ${\in_Y} \hookrightarrow Y \times_X P_f(Y) \xrightarrow{\pi_Y} Y \xrightarrow{f} X$) is an object of $\EsaEt(X)$.
To verify that it is the power object, we make use of the following lemma.

\begin{lemma}[Subobject pullbacks as inverse images] \label{l:pullback-mono-Pries}
    Let $g \colon S \to T$ be a morphism in $\Pries$, and let $A$ and $B$ be closed subsets of $S$ and $T$, respectively (and so Priestley spaces).
    There is a morphism $i \colon A \to B$ making the following square a pullback
    \[
    \begin{tikzcd}[ampersand replacement = \&]
        A \arrow{r}{i} \arrow[hook]{d}\& B \arrow[hook]{d}\\
        S \arrow[swap]{r}{g} \& T
    \end{tikzcd}
    \]
    if and only if $g^{-1}[B] = A$.
    The same holds when $\Pries$ is replaced by $\EsaEt(X)$ and $A$ and $B$ are assumed to be clopen upsets.
\end{lemma}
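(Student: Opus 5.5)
The plan is to reduce everything to the fact that limits in $\Pries$ are computed as in $\Set$ (\cref{r:limits-in-Pries}), and that the $\EsaEt(X)$ case is controlled by \cref{t:the-triangle-property-is-not-that-bad}\eqref{i:pullbacks}.

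For the $\Pries$ statement, suppose first that $g^{-1}[B]=A$. Then $g$ restricts to a map $i\colon A\to B$. This map is continuous and order-preserving, because $A$ and $B$ carry the subspace topology and the induced order. It therefore makes the square commute. To see that the square is a pullback, I compute the pullback of the inclusion $B\hookrightarrow T$ along $g$ in $\Pries$. By \cref{r:limits-in-Pries} it is the set $\{(s,b)\in S\times B : g(s)=b\}$ with the subspace topology and order of $S\times B$. The projection onto $S$ is a bijection onto $g^{-1}[B]=A$, and it is continuous and order-preserving with inverse $s\mapsto (s,g(s))$. This inverse is also continuous and order-preserving, so the projection is an isomorphism in $\Pries$ compatible with the cone. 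Hence $(A,i,\text{incl})$ is a pullback.

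Conversely, suppose some $i$ makes the square a pullback. Commutativity forces $g[A]\subseteq B$, i.e.\ $A\subseteq g^{-1}[B]$. Applying the forgetful functor $\Pries\to\Set$, which preserves limits, the square is a pullback in $\Set$. In $\Set$ the pullback of an inclusion along $g$ is, up to unique isomorphism over $S$, the inclusion $g^{-1}[B]\hookrightarrow S$. Since $A\hookrightarrow S$ is injective, the comparison bijection identifies $A$ with $g^{-1}[B]$ as subsets of $S$. The one point needing care is that the comparison map commutes with the inclusions into $S$, which gives equality of subsets and not merely a bijection.

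For the $\EsaEt(X)$ version, with $A$, $B$ clopen upsets (so objects of $\EsaEt(X)$ via restriction by \cref{p:subobjects-EsaEt}), I use \cref{t:the-triangle-property-is-not-that-bad}\eqref{i:pullbacks}: the domain functor $\EsaEt(X)\to\Pries$ creates pullbacks, so in particular it preserves and reflects them. For the forward direction, a pullback square in $\EsaEt(X)$ maps to one in $\Pries$, and the $\Pries$ result gives $g^{-1}[B]=A$. For the reverse direction, if $g^{-1}[B]=A$, then the restriction $i$ must be shown to be a morphism of $\EsaEt(X)$. Commutativity over $X$ is clear. For $i$ being a spectral local homeomorphism, I use the triangle property (\cref{prop: Triangle property for spectral local homeomorphisms}): the composite of $i$ with the inclusion $B\hookrightarrow T$ equals $g$ restricted to the clopen upset $A$. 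This composite is a spectral local homeomorphism, since inclusions of clopen upsets are (by \cref{thm.priest.loc}\eqref{i:in-terms-of-Priestley}) and these compose. The $\Pries$ square is then a pullback whose image is in $\EsaEt(X)$, and reflection gives the pullback in $\EsaEt(X)$.

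The main, though mild, obstacle is this last step: checking that $i$ really lies in $\EsaEt(X)$ before the reflection of pullbacks can be applied. Everything else is bookkeeping with set-level pullbacks.
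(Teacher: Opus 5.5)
Your proposal is correct and follows essentially the paper's route: you reduce to set-level pullbacks via the limit-preserving forgetful functor $\Pries\to\Set$, and you transfer to $\EsaEt(X)$ via creation of pullbacks (\cref{t:the-triangle-property-is-not-that-bad}\eqref{i:pullbacks}). The one difference in the $\Pries$ case is minor: for the converse you exhibit an isomorphism with the explicit pullback inside $S\times B$, whereas the paper checks the universal property directly. Your use of the triangle property (\cref{prop: Triangle property for spectral local homeomorphisms}) to confirm that the restriction $i$ is a morphism of $\EsaEt(X)$ before applying reflection is a good addition, since the paper's proof leaves that step implicit.
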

\begin{proof}
    Let $g \colon S \to T$ be a morphism in $\Pries$, and let $A$ and $B$ be closed subsets of $S$ and $T$, respectively.
    Suppose first that there is a morphism $i \colon A \to B$ making the square in the statement a pullback.
    Since the functor $\Pries\to\Set$ preserves limits, the square is also a pullback in $\Set$.
    Since pullbacks along inclusions in $\Set$ are given by inverse image, the conclusion follows.

    Conversely, assume that $g^{-1}[B]=A$. Then, the morphism $g$ restricts to a map $i\coloneqq g{\restriction}_A\colon A\to B$. Since $A$ carries the subspace topology and induced order from $S$, and $B$ carries the subspace topology and induced order from $T$, the restriction $i$ is continuous and order-preserving, and hence a morphism in $\Pries$ making the square in the statement commute. 
    Moreover, this square is a pullback in $\Set$. To prove that the square is a pullback in $\Pries$, let $C$ be a Priestley space, and let $h\colon C\to  S$ and $k\colon C\to B$ be continuous order-preserving maps such that $g h= i_B k$. Then, there is a unique map $p\colon C\to A$ in $\Set$ such that $ip=k$ and $i_A p=h$, as in the following diagram:
    \[
    \begin{tikzcd}
    	C && \\
    	& A & B \\
    	& S & T
    	\arrow["p"{description}, dashed, from=1-1, to=2-2]
    	\arrow["k", from=1-1, to=2-3, bend left = 2em]
    	\arrow["h"', from=1-1, to=3-2, bend right = 2em]
    	\arrow["i", from=2-2, to=2-3]
    	\arrow["{i_A}"', hook, from=2-2, to=3-2]
    	\arrow["{i_B}", hook, from=2-3, to=3-3]
    	\arrow["g"', from=3-2, to=3-3]
    \end{tikzcd}
    \]
    
    It remains to check that $p$ is a morphism in $\Pries$. The subspace topology on $A$ is the initial topology with respect to the inclusion $i_A\colon A\hookrightarrow S$.
    Therefore, the map $p$ is continuous because $i_Ap=h$ is continuous. Similarly, since the order on $A$ is induced from the order on $S$, the inclusion $i_A$ is an order-embedding. Hence, for $c,c'\in C$ with $c\leq c'$, we have
    \[
    i_Ap(c)=h(c)\leq h(c')=i_Ap(c'),
    \]
    and therefore $p(c)\leq p(c')$ in $A$. Thus $p$ is order-preserving, and so $p$ is a morphism in $\Pries$.
    
    The uniqueness of $p$ as a morphism in $\Pries$ follows from its uniqueness as a map in $\Set$. Hence, the square is a pullback in $\Pries$.
    
    Finally, let us prove the statement for $\EsaEt(X)$. Let $g\colon S\to T$ be a morphism in $\EsaEt(X)$, and let $A\subseteq S$ and $B\subseteq T$ be clopen upsets, equipped with the induced structure maps to $X$. By the first part, the corresponding square is a pullback in $\Pries$ if and only if $g^{-1}[B]=A$. Since the forgetful functor
    \[
    \EsaEt(X)\longrightarrow \Pries
    \]
    creates pullbacks by \cref{t:the-triangle-property-is-not-that-bad}\eqref{i:pullbacks}, the square is a pullback in $\EsaEt(X)$ if and only if its image is a pullback in $\Pries$. Therefore, the same equivalence holds in $\EsaEt(X)$.
\end{proof}

\begin{theorem}[$P_f(Y)$ is the power object] \label{thm: Power object theorem}
    Let $f \colon Y \to X$ be a spectral local homeomorphism between Esakia spaces, with $X$ étale-finite.
    In $\EsaEt(X)$, the object $p \colon P_{f}(Y) \to X$ is the power object of $f \colon Y\to X$.
\end{theorem}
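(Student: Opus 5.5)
We have to verify the universal property: given an object $g\colon B\to X$ of $\EsaEt(X)$ and a monomorphism $m\colon R\rightarrowtail Y\times_X B$, there is a unique morphism $\tilde m\colon B\to P_f(Y)$ over $X$ with $(\id_Y\times_X\tilde m)^{-1}[{\in_Y}]$ equal to $R$. By \cref{p:subobjects-EsaEt} we may identify $R$ with a clopen upset of $Y\times_X B$. By \cref{l:pullback-mono-Pries}, the pullback condition is then exactly the equation
\[
(\id_Y\times_X\tilde m)^{-1}[{\in_Y}]=R .
\]
Here $\id_Y\times_X\tilde m$ is a morphism by \cref{t:the-triangle-property-is-not-that-bad}, and ${\in_Y}$ is a clopen upset by the preceding lemma.

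\textbf{Defining $\tilde m$ and uniqueness.} Any morphism over $X$ has the form $\tilde m(b)=(g(b),K_b)$. The displayed equation says that for $y$ with $f(y)=g(b)$ we have $y\in K_b$ iff $(y,b)\in R$. Since $K_b\subseteq f^{-1}[{\u}g(b)]$ is an upset, and $\tilde m$ is order-preserving, this determines $K_b$ completely. Indeed, for $y\in f^{-1}[{\u}g(b)]$ the p-morphism property of $g$ (strictness, \cref{thm.priest.loc}) gives a unique $b'\geq b$ with $g(b')=f(y)$. From $\tilde m(b)\preceq\tilde m(b')$ we get $K_{b'}=K_b\cap f^{-1}[{\u}g(b')]$, so $y\in K_b$ iff $y\in K_{b'}$ iff $(y,b')\in R$. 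We therefore set
\[
K_b\coloneqq\{y\in f^{-1}[{\u}g(b)] : (y,b')\in R \text{ for the unique } b'\geq b \text{ with } g(b')=f(y)\},
\]
and the computation just given proves uniqueness.

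\textbf{Well-definedness and order.} Next we check that $K_b$ is an upset. Take $y\in K_b$ and $y\le y'$, and let $b''\ge b$ be the lift of $f(y')$. By strictness, $b''$ is also the unique lift above $b'$. Hence $(y,b')\le(y',b'')$, and since $R$ is an upset we get $(y',b'')\in R$. Order preservation $\tilde m(b)\preceq\tilde m(b')$ for $b\le b'$ follows from the same uniqueness of lifts. Since $\tilde m$ commutes with the projections to $X$, the triangle property (\cref{prop: Triangle property for spectral local homeomorphisms}) reduces the claim that $\tilde m$ is a spectral local homeomorphism to its continuity. The pullback identity is immediate from the definition: take $b'=b$ when $f(y)=g(b)$.

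\textbf{Main obstacle: continuity.} By \cref{thm: Identification with Vietoris} it suffices to check continuity against the subbasis $\tau_{\Pi,\Lambda}$. We have $\tilde m^{-1}[\Pi(W)]=g^{-1}[W]$, which is clopen. For a clopen upset $V\subseteq Y$, $\tilde m^{-1}[\Lambda(V)]$ is the set of $b$ such that, for every $b'\ge b$ and every $y$ with $f(y)=g(b')$, we have $(y,b')\in R$ iff $y\in V$. That is,
\[
\tilde m^{-1}[\Lambda(V)]=\Box\,\forall_{\pi_B}\bigl[R\leftrightarrow (V\times_X B)\bigr],
\]
where $\pi_B\colon Y\times_X B\to B$ is the projection and the biconditional is classical. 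This set is clopen: $R\leftrightarrow(V\times_X B)$ is clopen, $\pi_B$ is a local homeomorphism between compact spaces, so $\forall_{\pi_B}$ preserves clopens (\cref{not:forall-along-map}), and $B$ is Esakia, so $\Box$ preserves clopens. The complement of $\tilde m^{-1}[\Lambda(V)]$ is then clopen as well. Checking this identity carefully, using strictness of $g$ so that the lifts $b'$ are exactly the points of ${\u}b$, is the delicate step.
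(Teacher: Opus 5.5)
Your proposal is correct and follows the paper's proof essentially step for step. It reduces via \cref{l:pullback-mono-Pries} to the preimage equation, gets uniqueness from strictness of $g$ plus order-preservation of $\tilde m$, uses the same explicit $K_b$, and applies the triangle property to reduce everything to continuity. The only deviation is that you test continuity on the $\tau_{\Pi,\Lambda}$-subbasis via the (correct) identity $\tilde m^{-1}[\Lambda(V)]=\Box\,\forall_{\pi_B}[R\leftrightarrow \pi_Y^{-1}[V]]$, which relies on \cref{thm: Identification with Vietoris}. The paper instead tests on the Vietoris subbasis via $k^{-1}[\llbracket V\rrbracket]=\Box\,\forall_{\pi_Z}[R\Rightarrow\pi_Y^{-1}[V]]$, which is the same kind of computation.
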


\begin{proof}
    Let $m \colon Z\to X$ be an object in $\EsaEt(X)$.
    By \cref{p:subobjects-EsaEt}, the subobjects of $Y\times_{X}Z \to X$ are the clopen upsets of $Y\times_{X}Z$.
    So, let $R\subseteq Y\times_{X}Z$ be a clopen upset. 
    We shall show that there is a unique morphism $k \colon Z\to P_{f}(Y)$ such that there is a morphism $R \to {\in_Y}$ making the following square a pullback.
    \[
        \begin{tikzcd}
            R \arrow[d, tail] \arrow[r]                    & \in_{Y} \arrow[d, tail] \\
            Y\times_X Z \arrow[r, "\id_{Y}\times k"'] & Y\times_X P_f(Y)
        \end{tikzcd}
    \]
    By \cref{l:pullback-mono-Pries}, this amounts to showing that there is a unique morphism $k \colon Z\to P_{f}(Y)$ such that $(\id_Y \times k)^{-1}[\in_Y] = R$, where $\id_Y \times k$ is the product function $Y\times_X Z \to Y\times_X P_f(Y)$.
    In turn, $(\id_Y \times k)^{-1}[\in_Y] = R$ means: for all $(y,z) \in Y \times_X Z$, setting $(x_{z},K_{z}) \coloneqq k(z)$, 
    \begin{equation} \label{l:iff}
        y \in K_{z} \iff (y,z) \in R.
    \end{equation}

    We first prove the uniqueness of $k$.
    Let $k \colon Z\to P_{f}(Y)$ be a morphism such that $(\id_Y \times k)^{-1}[\in_Y] = R$.
    For each $z \in Z$, we write $(x_z,K_z)\coloneqq k(z)$.
    Since $k$ is a morphism, the following diagram commutes.
    \[
        \begin{tikzcd}
            Z \arrow[swap]{rd}{m}\arrow{rr}{k} & & P_f(Y)\arrow{ld}{p}\\
            & X
        \end{tikzcd}
    \]
    Therefore, $x_z = p(x_z, K_z) = pk(z) = m(z)$.
    This shows that we have at most one possible value for $x_z$.
    We are left to show that there is at most one possible value for $K_z$.
    Recall that, since $(x_z, K_z) \in P_f(Y)$, we have $K_{z}\subseteq f^{-1}[{\u}x_{z}]=f^{-1}[{\u}m(z)]$ (the equality following from what was just shown). 
    Therefore, we shall determine which elements of $f^{-1}[\u m(z)]$ belong to $K_z$.
    If we take $y' \in f^{-1}[\u m(z)]$, we have $f(y') \geq m(z)$.
    Then, by the strictness of $m$, there is a unique $z' \in \u z$ such that $m(z') = f(y')$, which means that $(y',z')\in Y\times_{X}Z$.
    By \eqref{l:iff}, $y' \in K_{z'}$ if and only if $(y', z') \in R$.
    Since $k$ is order-preserving and $z \leq z'$, we have $k(z) \preceq k(z')$.
    Therefore, by the definition of the partial order $\preceq$ on $P_f(Y)$, we have $K_{z'} = K_z \cap f^{-1}[\u m(z')]$.
    Since $y' \in f^{-1}[\u m(z')]$, we have
    \[
    y' \in K_z \iff y' \in K_{z'} \iff (y', z') \in R.
    \]
    In conclusion, for every $z \in Z$, $x_z = m(z)$, and $K_z$ is the set of $y' \in f^{-1}[\u m(z)]$ such that, for the unique $z' \in \u z$ with $f(y') = m(z')$, we have $(y', z') \in R$.
    This proves the uniqueness of $k$.

    Having shown that there is at most one possible candidate, let us prove the existence of the morphism $k$ with the desired properties.
 
    For each $z$, set $x_z \coloneqq m(z)$, and define $K_z$ as the set of elements $y' \in f^{-1}[\u m(z)]$ such that, for the unique $z' \in \u z$ with $(y',z') \in Y \times_X Z$, we have $(y',z') \in R$. In other words:
    \[
        K_{z} = f^{-1}[{\u}m(z)]\cap \pi_{Y}[R\cap \pi_{Z}^{-1}[{\u}z]].
    \]
    We show that the function
    \begin{align*}
        k \colon Z & \longrightarrow P_f(Y)\\
        z & \longmapsto (x_z, K_z)
    \end{align*}
    is well-defined.
    This is done by showing that, for each $z\in Z$, $k(z)$ is indeed an element of $P_f(Y)$, i.e., that $K_z \subseteq f^{-1}[\u x_z]$ and that $K_z$ is an upset of $Y$.
    Clearly, $K_z \subseteq f^{-1}[{\u}m(z)] = f^{-1}[\u x_z]$; we prove that $K_{z}$ is an upset of $Y$.
    Let $y' \in K_{z}$ and $y' \leq y''$.
    Let $z'$, resp.\ $z''$, be the unique element in $\u z$ such that $f(y') = m(z')$, resp.\ $f(y'') = m(z'')$.
    Since $y' \in K_z$, we have $(y', z') \in R$.
    Moreover, since $m{\restriction}_{\u z}$ is an order-isomorphism and $m(z') = f(y') \leq f(y'') = m(z'')$, we have $z' \leq z''$.
    Since $R$ is an upset of $Y \times_X Z$ and $(y'', z'') \geq (y', z') \in R$, we have $(y'', z'') \in R$, and hence $y'' \in K_z$.
    This shows that $K_z$ is an upset.
    Thus, $k$ is a well-defined function.
    
    It is immediate that the following diagram of functions commutes.
    \[
        \begin{tikzcd}
            Z \arrow[swap]{rd}{m}\arrow{rr}{k} & & P_f(Y)\arrow{ld}{p}\\
            & X
        \end{tikzcd}
    \]

    To prove that $k$ is a morphism in $\EsaEt(X)$, by the triangle property of $\Esa_{\SLH}$ in $\Pries$ (\cref{prop: Triangle property for spectral local homeomorphisms}) it suffices to show that $k$ is order-preserving and continuous.

    Let us prove that $k$ is order-preserving. Let $z,z' \in Z$ with $z\leq z'$, and let us prove that $k(z) \preceq k(z')$, i.e., that $(m(z), K_z) \preceq (m(z'), K_{z'})$, where $\preceq$ is the order on $P_f(Y)$.
    By monotonicity of $m$, we have $m(z)\leq m(z')$.
    We prove the equality $K_{z'}=K_z\cap f^{-1}[\u m(z')]$. 
    The inclusion $K_{z'}\subseteq K_z\cap f^{-1}[\u m(z')]$ holds because $z\leq z'$: indeed, if $y\in K_{z'}$, the unique point of $\u z'$ mapping to $f(y)$ is also the unique point of $\u z$ mapping to $f(y)$.
    Conversely, let $y\in K_z\cap f^{-1}[\u m(z')]$. Let $\bar z$ be the unique point in $\u z$ such that $m(\bar z)=f(y)$. Since $z'\in \u z$, $\bar z\in \u z$, and
    \[
        m(z')\leq f(y)=m(\bar z),
    \]
    the restriction $m\colon \u z\to \u m(z)$ reflects the order by strictness of $m$. Hence $z'\leq \bar z$. Therefore, $\bar z\in \u z'$, and so $\bar z$ is also the unique point of $\u z'$ mapping to $f(y)$. Since $y\in K_z$, we have $(y,\bar z)\in R$, and hence $y\in K_{z'}$. Thus
    \[
        K_{z'}=K_z\cap f^{-1}[\u m(z')],
    \]
    as required.
    Therefore, $k$ is order-preserving.

    Next, we prove continuity of $k$ by showing that preimages of subbasic clopens for the topology $\tau_{\Viet}$ are clopen; by \cref{thm: Identification with Vietoris}, this establishes continuity. Consider first a clopen $W\subseteq X$; then,
    \[
        k^{-1}[\Pi(W)]=\{z\in Z : m(z)\in W\}=m^{-1}[W],
    \]
    which is clopen.
    Now let $V$ be a clopen subset of $Y$; then
    \[
        k^{-1}[\llbracket V\rrbracket]=\{z\in Z : K_{z}\subseteq V\}.
    \]
    
    Let $\pi_Y \colon Y \times_X Z \to Y$ and $\pi_Z \colon Y \times_X Z \to Z$ denote the projections. To show that $k^{-1}[\llbracket V\rrbracket]$ is clopen, we claim
    \[
        k^{-1}[\llbracket V\rrbracket] = \Box \forall_{\pi_Z}[R \Rightarrow \pi_Y^{-1}[V]].
    \]
    (Note that $\Box \forall_{\pi_Z}[R \Rightarrow \pi_Y^{-1}[V]]$ is clopen because $R$ is clopen, $\pi_{Z}$ is an open map and $Z$ is Esakia.)
    We prove the claim. Let $z\in Z$. Then $z\in \Box \forall_{\pi_Z}[R \Rightarrow \pi_Y^{-1}[V]]$ if and only if, for every $z'\geq z$ and every $y\in Y$ such that $f(y)=m(z')$, we have
    \[
        (y,z')\in R \text{ implies } y\in V.
    \]
    We show that this is equivalent to $K_z\subseteq V$.
    Suppose first that $z\in \Box \forall_{\pi_Z}[R \Rightarrow \pi_Y^{-1}[V]]$, and let $y'\in K_z$. Let $z'$ be the unique point in $\u z$ such that $m(z')=f(y')$. By definition of $K_z$, we have $(y',z')\in R$. Since $z'\geq z$, the assumption gives $y'\in V$. Thus $K_z\subseteq V$.
    Conversely, suppose that $K_z\subseteq V$. Let $z'\geq z$, and let $y'\in Y$ be such that $f(y')=m(z')$. Assume that $(y',z')\in R$. Since $z'\geq z$, we have $f(y')=m(z')\geq m(z)$, and so $y'\in f^{-1}[\u m(z)]$. Moreover, by strictness of $m$, the point $z'$ is the unique element of $\u z$ mapped by $m$ to $f(y')$. Hence, by definition of $K_z$, from $(y',z')\in R$ we get $y'\in K_z$. Since $K_z\subseteq V$, it follows that $y'\in V$. Therefore
    \[
        z\in \Box \forall_{\pi_Z}[R \Rightarrow \pi_Y^{-1}[V]].
    \]
    This proves
    \[
        k^{-1}[\llbracket V\rrbracket]
        =
        \Box \forall_{\pi_Z}[R \Rightarrow \pi_Y^{-1}[V]].
    \]
    This proves that $k^{-1}[\llbracket V \rrbracket]$ is clopen.
    Clearly, also the set $k^{-1}[\langle V \rangle] = Z \smallsetminus k^{-1}[\tru{Y \smallsetminus V}]$ is clopen.
    We thus have that the preimage of subbasic clopens in $P_{f}(Y)$ is clopen, and hence $k$ is continuous. 
    
    Since $pk=m$ and $k$ is continuous and order-preserving, by the triangle property (\cref{prop: Triangle property for spectral local homeomorphisms}) $k$ is a spectral local homeomorphism.

    Finally, let $(y,z) \in Y\times_X Z$. We prove that
    \[
        y \in K_z \iff (y,z) \in R.
    \]
    Suppose first that $(y,z)\in R$. Since $z\in \u z$ and $m(z)=f(y)$, the point $z$ is the unique element of $\u z$ mapped by $m$ to $f(y)$. Hence, by the definition of $K_z$, we have $y\in K_z$.
    Conversely, suppose that $y\in K_z$. By definition of $K_z$, if $\bar z$ is the unique point in $\u z$ such that $m(\bar z)=f(y)$, then $(y,\bar z)\in R$. But $(y,z)\in Y\times_X Z$, so $m(z)=f(y)$; hence $z$ is also an element of $\u z$ mapped by $m$ to $f(y)$. By uniqueness, $\bar z=z$. Therefore, $(y,z)\in R$.
\end{proof}

We arrive at the main result of the paper:

\begin{theorem}[\'Etale-finite $\Rightarrow$ topos-admissible] \label{thm: elementary topos from etale-finite Heyting algebra}
    For every étale-finite Heyting algebra $H$, there is an elementary topos $\E$,
    namely
    \[
    \E \coloneqq \EsaEt(\Spec(H)),
    \]
    such that $\Sub_\E(1)\cong H$.
\end{theorem}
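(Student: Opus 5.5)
The plan is to assemble the pieces already established. Let $H$ be an étale-finite Heyting algebra and set $X \coloneqq \Spec(H)$, the dual Esakia space. By \cref{cor:etale-finite-iff-etale-finite}, $X$ is an étale-finite Esakia space. I will fix a continuous strict p-morphism $\acute{e}\colon X\to X_0$ with $X_0$ finite, as required in the hypotheses of the power-object results.

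Next I verify that $\E = \EsaEt(X)$ is an elementary topos, using the definition recalled in \cref{sec: preliminaries}: finite limits plus a power object for every object. Finite completeness is \cref{t:the-triangle-property-is-not-that-bad}, which holds for any Esakia space. For power objects, let $f\colon Y\to X$ be an arbitrary object of $\EsaEt(X)$, so $f$ is a spectral local homeomorphism between Esakia spaces with $X$ étale-finite. Then \cref{t:spectralochomeoforpower} shows that $p\colon P_f(Y)\to X$ is an object of $\EsaEt(X)$. The lemma following \cref{def: definition of the product in power object} shows that ${\in_Y}$ is a clopen upset of $Y\times_X P_f(Y)$, and hence, by \cref{p:subobjects-EsaEt}, a monomorphism ${\in_Y}\rightarrowtail Y\times_X P_f(Y)$.

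The power-object definition quantifies over all monomorphisms $R\rightarrowtail Y\times_X Z$, whereas \cref{thm: Power object theorem} treats clopen upsets $R$. So the one point needing care is to reduce an arbitrary mono to a clopen upset. By \cref{p:subobjects-EsaEt}, any mono is isomorphic over $Y\times_X Z$ to the inclusion of a clopen upset. Both the pullback condition and the uniqueness of $\tilde f$ are invariant under such isomorphisms, so \cref{thm: Power object theorem} gives exactly the universal property. Hence every object has a power object and $\E$ is a topos.

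Finally, \cref{p:subterminals-EsaEt} gives $\Sub_\E(1)\cong \ClopUp(X)=\ClopUp(\Spec(H))$, and Esakia duality gives $\ClopUp(\Spec(H))\cong H$. I expect no real obstacle: the substantive work is in \cref{thm: Power object theorem}, and the only bookkeeping is the mono-versus-clopen-upset reduction and the use of the form of the topos definition that requires power objects rather than exponentials.
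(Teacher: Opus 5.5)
Your proposal is correct and follows the paper's route. The paper likewise gets étale-finiteness of $\Spec(H)$ from \cref{cor:etale-finite-iff-etale-finite}, finite limits from \cref{t:the-triangle-property-is-not-that-bad}, power objects from \cref{thm: Power object theorem}, and $\Sub_\E(1)\cong\ClopUp(\Spec(H))\cong H$ from \cref{p:subterminals-EsaEt} and Esakia duality; it handles the mono-versus-clopen-upset reduction, as you do, via \cref{p:subobjects-EsaEt} at the start of the proof of \cref{thm: Power object theorem}.
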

\begin{proof}
    By \cref{cor:etale-finite-iff-etale-finite}, $\Spec(H)$ is an étale-finite Esakia space.
    Set $\E \coloneqq \EsaEt(\Spec(H))$. 
    By \cref{t:the-triangle-property-is-not-that-bad}, this category has all finite limits, and, by \cref{thm: Power object theorem}, every object has a power object. Thus, it is an elementary topos. By \cref{p:subterminals-EsaEt}, $\Sub_{\E}(1) \cong \ClopUp(\Spec(H))$, and, by Esakia duality, this is isomorphic to $H$.
\end{proof}

\section{Finitely propositional toposes} \label{sec:finitely-propositional-toposes}

In this section, we exhibit an obstruction to extending our use of compact étale spaces beyond the étale-finite case. For this purpose, we introduce the notion of a \emph{finitely propositional} topos and show that the étale-finite Heyting algebras are precisely those Heyting algebras that arise as the lattice of truth values of such toposes.

\begin{definition}[(Finitely) propositional] \label{d:finitely propositional topos}
    We say that a category with a terminal object is \emph{(finitely) propositional} if every object admits a jointly epimorphic (finite) family of morphisms from subterminal objects; that is, for every object $Y$ there is a jointly epimorphic (finite) family
    \[
    \{V_i\longrightarrow Y\}_{i\in I}
    \]
    with each $V_i$ subterminal.
\end{definition}

Our interest in (finite) propositionality is, roughly speaking, that it is a property of categories of (compact) étale spaces.

Note that a category with a terminal object is propositional precisely when its
subterminal objects form a separating family (see
\cite[p.~12]{johnstone2002sketches}). For Grothendieck toposes, propositionality is characterised geometrically: it holds precisely for those Grothendieck toposes that are localic (i.e., equivalent to the
topos of sheaves on a locale), see \cite[Thm.~C1.4.7]{johnstone2002sketches}. Thus, for instance, the
presheaf topos $[\cat{C}^{\op},\Set]$ is non-propositional whenever $\cat{C}$
is not a preorder (see \cite[Cor.~3.2]{caramello}).

Outside the Grothendieck setting, the notion of propositionality for elementary
toposes has been studied in \cite{kenney}.

Finally, the conditions of propositionality and finite propositionality do not coincide: an example of a propositional topos that is not finitely propositional is the category $\E \coloneqq \mathrm{Sh}(\omega + 1)$ of sheaves on the locale $\omega + 1$. Indeed, since $\Sub_\E(1) \cong \omega + 1$ and the Heyting algebra $\omega + 1$ is not étale-finite, it will follow from \cref{cor: examples of non etale-finite algebras}\eqref{i:chain-non-etale} below that $\E$ is not finitely propositional.

\begin{theorem}[Esakia étale spaces are finitely propositional]\label{thm: etale-finite yield finitely propositional}
    For every Esakia space $X$, the category $\EsaEt(X)$ is finitely propositional.
\end{theorem}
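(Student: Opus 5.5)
Given an object $f\colon Y\to X$ of $\EsaEt(X)$, I will use the decomposition from \cref{thm.priest.loc}\eqref{i: Local homeomorphism decomposition}. It provides clopen upsets $U_1,\dots,U_n$ of $Y$ with $Y=U_1\cup\dots\cup U_n$. For each $i$, the image $W_i\coloneqq f[U_i]$ is a clopen upset of $X$, and $f{\restriction}_{U_i}\colon U_i\to W_i$ is an order-homeomorphism. The candidate finite family consists of the maps
\[
s_i\coloneqq (f{\restriction}_{U_i})^{-1}\colon W_i \longrightarrow U_i\longhookrightarrow Y,\qquad i\in\{1,\dots,n\},
\]
viewed as morphisms in $\EsaEt(X)$ from $(W_i\hookrightarrow X)$ to $(Y\xrightarrow{f}X)$.

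First I will check that each $W_i\hookrightarrow X$ is a subterminal object. By \cref{p:subobjects-EsaEt} applied to $\id_X$ (equivalently \cref{p:subterminals-EsaEt}), a clopen upset of $X$ with its inclusion is a subobject of the terminal object $\id_X$ given by \cref{t:the-triangle-property-is-not-that-bad}\eqref{eq: terminal object}. Here $W_i$ is Esakia by \cref{properties.pri}\eqref{properties.pri2}, and the inclusion is an injective spectral local homeomorphism. Next I will check that $s_i$ is a morphism. It is an order-homeomorphism onto the clopen upset $U_i$, followed by an inclusion of a clopen upset, and $f s_i$ is the inclusion $W_i\hookrightarrow X$. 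So $s_i$ is continuous and order-preserving over $X$. Since both structure maps are spectral local homeomorphisms, the triangle property (\cref{prop: Triangle property for spectral local homeomorphisms}) makes $s_i$ a morphism of $\EsaEt(X)$.

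It remains to show that the family $\{s_i\}$ is jointly epimorphic. Let $g,h\colon Y\to Z$ be morphisms in $\EsaEt(X)$ with $g s_i=h s_i$ for all $i$. The images $s_i[W_i]=U_i$ cover $Y$, so $g$ and $h$ agree on every point and hence $g=h$ as functions, which is equality of morphisms. This gives finite propositionality.

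I do not expect a real obstacle. The only point requiring care is confirming that the $s_i$ are legitimate morphisms of the slice, and the triangle property settles this. The finiteness of the family comes entirely from compactness, already packaged in \cref{thm.priest.loc}. Note that étale-finiteness of $X$ is not needed: the argument uses only that $X$ is Esakia.
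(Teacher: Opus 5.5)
Your proposal is correct and follows essentially the same route as the paper: the same finite cover by clopen upsets from \cref{thm.priest.loc}, the same sections $(f{\restriction}_{U_i})^{-1}$ from the subterminals $f[U_i]$, and joint epimorphicity from the fact that the $U_i$ cover $Y$. The only difference is that you invoke the triangle property explicitly to confirm the sections are morphisms of $\EsaEt(X)$, a step the paper leaves implicit.
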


\begin{proof}
    Let $f\colon Y\to X$ be an object of $\EsaEt(X)$.
    By the implication \eqref{i:spectral-local}$\Rightarrow$\eqref{i: Local homeomorphism decomposition} in \cref{thm.priest.loc}, there are clopen upsets $U_1,\dots,U_n$ of $Y$ such that $ Y = U_1\cup\cdots\cup U_n$ and, for each $i\in\{1,\dots,n\}$, the restriction $f{\restriction}_{U_i}\colon U_i \to f[U_i]$
    is an order-homeomorphism. Set $V_i \coloneqq f[U_i]$.
    Then, each $V_i$ is a clopen upset of $X$, and hence determines a subterminal object $V_i\rightarrowtail X$ of $\EsaEt(X)$. Let
    \[
        g_i\colon V_i \longrightarrow Y
    \]
    be the composite of $(f{\restriction}_{U_i})^{-1}$ with the inclusion $U_i \hookrightarrow Y$. Since $f g_i$ is the inclusion $V_i \hookrightarrow X$, each $g_i$ is a morphism in the slice category $\EsaEt(X)$.

    The family $\{ g_i : i\in\{1,\dots,n\}\}$ is jointly epimorphic. Indeed, any faithful functor reflects jointly epimorphic families, and so, by considering the composite
    \[
        \EsaEt(X) \longrightarrow \Esa_{\SLH} \longrightarrow \cat{Set},
    \]
    it suffices to check that $\{ g_i : i\in\{1,\dots,n\}\}$ is jointly surjective. But this follows immediately from $Y=U_1\cup \dots\cup U_n$.
\end{proof}

It is reasonable to ask whether one could extend our approach to Heyting algebras that are not étale-finite---such as the chains $\omega + 1$ or $\omega + \omega^\op$. We will now show that, for every finitely propositional topos $\E$, the Heyting algebra $\mathrm{Sub}_{\E}(1)$ is étale-finite.

\begin{proposition}[Finitely propositional $\Rightarrow$ Jibladze's law over a finite subset]\label{prop: weak etale condition}
    Let $\E$ be a finitely propositional elementary topos, and set $H \coloneqq \Sub_\E(1)$. There is a finite subset $S\subseteq H$ such that, for every $a\in H$,
    \[
    \bigvee_{s\in S} (a\leftrightarrow s) =1.
    \]
\end{proposition}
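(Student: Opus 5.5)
The plan is to apply finite propositionality to the subobject classifier $\Omega$ of $\E$ and read off $S$ from the resulting finite cover.

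\emph{Step 1 (the cover of $\Omega$).} By \cref{d:finitely propositional topos}, choose a finite jointly epimorphic family $\{g_i\colon V_i\to\Omega\}_{i=1}^n$ with each $V_i$ subterminal. Each $g_i$ classifies a subobject $S_i\rightarrowtail V_i$. A subobject of a subterminal object is subterminal, so $S_i$ determines an element $s_i\in H$ with $s_i\le v_i$, where $v_i\in H$ is the element given by $V_i$. Set $S\coloneqq\{s_1,\dots,s_n\}$. This set is finite and depends only on $\E$, not on $a$.

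\emph{Step 2 (pulling back along $\chi_a$).} Fix $a\in H$ with classifying map $\chi_a\colon 1\to\Omega$. Form the pullbacks $Q_i\coloneqq V_i\times_\Omega 1$. Each $Q_i$ is subterminal, because it is a subobject of the subterminal $V_i$ (its projection to $V_i$ is a pullback of the mono $\chi_a$). Write $w_i\in H$ for the corresponding element. I will use that in a topos a family $\{A_i\to B\}$ is jointly epimorphic iff the induced map $\coprod_i A_i\to B$ is epic. Since coproducts and epimorphisms are both pullback-stable in a topos, the family $\{Q_i\to 1\}$ is again jointly epimorphic. For subterminal $Q_i$ this means that the union of the subobjects $Q_i$ of $1$ is all of $1$, that is, $\bigvee_i w_i=1$ in $H$. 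This stability step is the one place where genuinely topos-theoretic input is needed. I expect it to be the main point to state carefully, in particular the equivalence between ``jointly epimorphic'' and ``the union of images is the top subobject''. It is nevertheless standard.

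\emph{Step 3 (comparing on each piece).} On $Q_i$ the two composites $Q_i\to V_i\xrightarrow{g_i}\Omega$ and $Q_i\to 1\xrightarrow{\chi_a}\Omega$ coincide by construction. By naturality of classification, the first classifies $S_i\cap Q_i$, i.e.\ $s_i\wedge w_i$, as a subobject of $Q_i$, and the second classifies $a\wedge w_i$. Hence $a\wedge w_i=s_i\wedge w_i$. In any Heyting algebra this gives $w_i\le a\leftrightarrow s_i$, since $w_i\wedge a=w_i\wedge s_i\le s_i$ and symmetrically. Therefore
\[
1=\bigvee_i w_i\le\bigvee_i (a\leftrightarrow s_i)=\bigvee_{s\in S}(a\leftrightarrow s),
\]
which is the claim.
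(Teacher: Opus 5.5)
Your proposal is correct and follows essentially the same route as the paper. Your $S_i$ and $Q_i$ are the paper's $U_i$ and $p^*(V_i)$. The identity $a\wedge w_i=s_i\wedge w_i$ comes from the two composites $Q_i\to\Omega$ agreeing; the paper phrases this by pasting pullback squares, you by naturality of classification. Finally, $\bigvee_i w_i=1$ is obtained by pulling back the epimorphism $\coprod_i V_i\to\Omega$ along $\chi_a$, using pullback-stability of coproducts and epimorphisms.
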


\begin{proof}
    By finite propositionality applied to the subobject classifier $\Omega$, let $V_1,\dots, V_n\in \Sub_\E(1)$ be such that the family $a_i\colon V_i\to \Omega$ is jointly epimorphic. Consider the subobject $U_i\rightarrowtail V_i$ classified by $a_i$, i.e., appearing in the following pullback diagram.
    \begin{equation}\label{pbb1}
        \begin{tikzcd}
        	{U_i} \arrow[dr, phantom, very near start, "\lrcorner"] & 1 
            \\
        	{V_i} & \Omega
        	\arrow[from=1-1, to=1-2]
        	\arrow[tail, from=1-1, to=2-1]
        	\arrow["t", from=1-2, to=2-2]
        	\arrow["{a_i}"', from=2-1, to=2-2]
        \end{tikzcd}
    \end{equation}
    For each $i \in \{1, \dots, n\}$, the composite $U_i\rightarrowtail V_i\rightarrowtail 1$ of monomorphisms shows that $U_i$ is also a subterminal object. We claim that $S=\{U_1,\dots, U_n\}$ works. 
    
    Let $W\rightarrowtail 1$ be any subterminal and let $p \colon 1 \to \Omega$ be its classifying morphism, i.e., the morphism for which there is a pullback diagram as follows.
    \begin{equation}\label{pbb2}
        \begin{tikzcd}
        	W \arrow[dr, phantom, very near start, "\lrcorner"]& 1 & \\
        	1 & \Omega 
        	\arrow[from=1-1, to=1-2]
        	\arrow[tail, from=1-1, to=2-1]
        	\arrow["t", from=1-2, to=2-2]
        	\arrow["p"', from=2-1, to=2-2]
        \end{tikzcd}
    \end{equation}
    For each $i \in \{1, \dots, n\}$, let $p^*(V_i)$ denote the pullback of $a_i\colon V_i\to\Omega$ along $p\colon 1\to\Omega$.
    \begin{equation}
        \label{pbb3}
        \begin{tikzcd}
        	{p^*(V_i)} \arrow[dr, phantom, very near start, "\lrcorner"] & 1 & \\
        	{V_i} & \Omega 
        	\arrow[from=1-1, to=1-2]
        	\arrow[tail, from=1-1, to=2-1]
        	\arrow["p", from=1-2, to=2-2]
        	\arrow["{a_i}"', from=2-1, to=2-2]
        \end{tikzcd}
    \end{equation}
    We now construct the following two diagrams:
    \[
    \begin{tikzcd}
    	{p^*(V_i)\wedge U_i}\arrow[dr, phantom, very near start, "\lrcorner"] & {U_i}\arrow[dr, phantom, very near start, "\lrcorner"] & 1 \\
    	{p^*(V_i)} & {V_i} & \Omega \\
    	{p^*(V_i)\wedge W} \arrow[dr, phantom, very near start, "\lrcorner"] & W\arrow[dr, phantom, very near start, "\lrcorner"] & 1 \\
    	{p^*(V_i)} & 1 & \Omega
    	\arrow[from=1-1, to=1-2, tail]
    	\arrow[tail, from=1-1, to=2-1]
    	\arrow[from=1-2, to=1-3]
    	\arrow[tail, from=1-2, to=2-2]
    	\arrow["t", from=1-3, to=2-3]
    	\arrow[tail, from=2-1, to=2-2]
    	\arrow["{a_i}", swap, from=2-2, to=2-3]
    	\arrow[tail, from=3-1, to=3-2]
    	\arrow[tail, from=3-1, to=4-1]
    	\arrow[from=3-2, to=3-3]
    	\arrow[tail, from=3-2, to=4-2]
    	\arrow["t", from=3-3, to=4-3]
    	\arrow[tail, from=4-1, to=4-2]
    	\arrow["p"', from=4-2, to=4-3]
    \end{tikzcd}
    \]
    Both left squares are pullback squares by the definition of the meet in subobject lattices. The right squares are pullback squares because of \eqref{pbb1} and \eqref{pbb2}, respectively. Hence, both outer squares are pullbacks. But note that the lower composites of both lower squares are equal by \eqref{pbb3}. Hence, both subobjects $p^*(V_i)\wedge U_i$ and $p^*(V_i)\wedge W$ of $p^*(V_i)$ are classified by the same morphism---i.e., $p^*(V_i)\wedge U_i=p^*(V_i)\wedge W$. It follows that
    \[
        p^*(V_i)\leq U_i\leftrightarrow W.
    \]
    
    Finally, construct the pullback
    \[
        \begin{tikzcd}
        	{p^*(V_1 \sqcup \dots \sqcup V_n)} \arrow[dr, phantom, very near start, "\lrcorner"] & 1 \\
        	{V_1\sqcup\dots\sqcup V_n} & \Omega
        	\arrow[two heads, from=1-1, to=1-2]
        	\arrow[from=1-1, to=2-1]
        	\arrow["p"', swap, from=1-2, to=2-2]
        	\arrow[two heads, from=2-1, to=2-2]
        \end{tikzcd}
    \]
    Since in a topos (regular) epimorphisms are pullback-stable, the morphism $p^*(V_1\sqcup\dots\sqcup V_n)\to 1$ in the diagram is an epimorphism. Moreover, recall that also coproducts are pullback-stable (see \cite[Lem.~1.51]{johnstonetopos}), and so we have an epimorphism $p^*(V_1)\sqcup \dots\sqcup  p^*(V_n)\twoheadrightarrow 1$. But joins in subobject lattices in a topos are computed by taking the monomorphic part of the image factorisation of the map from the coproduct (see \cite[p.~31]{johnstone2002sketches}); since this map is already an epimorphism, it follows that $p^*(V_1)\vee\dots \vee p^*(V_n)$ is the top element of $H$.  Hence, 
    \[
        1 = \bigvee_{i=1}^{n} p^*(V_i) \leq \bigvee_{i=1}^{n} (U_i\leftrightarrow W). \qedhere
    \]
\end{proof}

\begin{proposition}[Jibladze's law over a finite \emph{subset} $\Rightarrow$ étale-finite]\label{prop: finite set to etale-finite}
    Let $H$ be a Heyting algebra and $S\subseteq H$ a finite set such that, for all $x \in H$,
    \[
        \bigvee_{s\in S}(x \leftrightarrow s) =1.
    \]
    Then $H$ is locally finite, and so the Heyting subalgebra generated by $S$ is finite.
    Therefore, $H$ is étale-finite.
\end{proposition}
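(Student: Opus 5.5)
The plan is to show that every subdirectly irreducible homomorphic image of $H$ has at most $\lvert S\rvert$ elements. Then $H$ lies in a variety generated by finitely many finite algebras, and local finiteness follows as in \cref{rem: local finiteness from etale-finiteness}.

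\emph{Step 1 (the law passes to quotients).} Let $h\colon H\twoheadrightarrow M$ be a surjective Heyting homomorphism. Every $y\in M$ is of the form $h(x)$ for some $x \in H$. Since $h$ preserves finite joins, $\leftrightarrow$ and $1$, applying $h$ to $\bigvee_{s\in S}(x\leftrightarrow s)=1$ gives
\[
\bigvee_{s\in S}\bigl(y\leftrightarrow h(s)\bigr)=1 \quad\text{for every } y\in M.
\]
So the finite subset $h[S]\subseteq M$ satisfies the hypothesis of \cref{l:join-irreducible}.

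\emph{Step 2 (subdirectly irreducible quotients are small).} Let $M$ be a subdirectly irreducible quotient of $H$. By \cite[Prop.~A.1.1]{Esakiach2019HeyAlg}, $M$ has a greatest non-top element $c$. Hence the top of $M$ is finitely join-irreducible: if $a\vee b=1$ with $a,b\neq 1$, then $a,b\leq c$, so $a\vee b\leq c<1$, a contradiction. (Subdirectly irreducible algebras are nontrivial, so $c$ exists.) By \cref{l:join-irreducible} applied to $h[S]$, we get $M=h[S]$, and therefore $\lvert M\rvert\leq\lvert S\rvert$.

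\emph{Step 3 (local finiteness).} By Birkhoff's subdirect representation theorem, $H$ is a subdirect product of its subdirectly irreducible quotients. By Step 2, each of these is isomorphic to one of the finitely many Heyting algebras of cardinality at most $\lvert S\rvert$. Hence $H$ belongs to the variety generated by finitely many finite algebras. Such a variety is locally finite \cite[Thm.~10.16]{BurrisSankappanavar}, so $H$ is locally finite, and the subalgebra $H_0$ generated by $S$ is finite.

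\emph{Step 4 (étale-finiteness).} Since $S\subseteq H_0$, for every $x\in H$ we have
\[
1=\bigvee_{s\in S}(x\leftrightarrow s)\leq\bigvee_{x_0\in H_0}(x\leftrightarrow x_0).
\]
So the finite subalgebra $H_0$ witnesses étale-finiteness via \cref{r:etale-finite-basic}\eqref{i: finite algebra condition}.

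The only delicate point is Step 2: making sure the hypotheses of \cref{l:join-irreducible} hold in each quotient, in particular that the top is join-irreducible there. The greatest non-top element handles this. The remaining steps are routine universal algebra.
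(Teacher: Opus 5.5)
Your proof is correct and follows essentially the same route as the paper: transfer the law to quotients, use \cref{l:join-irreducible} to bound every subdirectly irreducible quotient by $\lvert S\rvert$, and conclude via Birkhoff's subdirect representation theorem and local finiteness of finitely generated varieties. The differences are cosmetic: you verify preservation under surjections directly instead of citing preservation of positive sentences, and you derive join-irreducibility of the top from the greatest non-top element instead of citing it.
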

\begin{proof}
    Let $n$ be the cardinality of $S$.
    The first-order sentence
    \begin{equation}\label{eq:formulaforetaleness}
        \exists s_{1}\,\dots\,\exists s_{n}\,\forall x\,\Bigl(\bigvee_{i=1}^{n}(x \leftrightarrow s_{i})=1\Bigr)
    \end{equation}
    holds in $H$.
    Moreover, note that this sentence is positive (in the first-order sense: it is built from atomic formulas without use of negation) and hence, by \cite[Prop.~5.2.12]{Chang2012-ub}, it is preserved under surjective homomorphisms. Recall that the top element of a subdirectly irreducible Heyting algebra is completely join-irreducible \cite[Prop.~A.1.1]{Esakiach2019HeyAlg}; thus, by \cref{l:join-irreducible}, every subdirectly irreducible Heyting algebra satisfying \eqref{eq:formulaforetaleness} has at most $n$ elements. Therefore, every subdirectly irreducible homomorphic image of $H$ has cardinality at most $n$.
    Therefore, $H$ has, up to isomorphism, finitely many subdirectly irreducible homomorphic images. 
    By Birkhoff's subdirect representation theorem, $H$ embeds into a product of its subdirectly irreducible homomorphic images. Therefore:
    \begin{equation*}
        \mathsf{Var}(H)=\mathsf{Var}(\{H_{0} : H\twoheadrightarrow H_{0}, H_{0}\text{ is s.i.}\}),
    \end{equation*}
    and we conclude that $\mathsf{Var}(H)$ is finitely generated, i.e., generated by a finite family of finite algebras.

    It is a classical result \cite[Theorem~10.16]{BurrisSankappanavar} that finitely generated varieties are locally finite.
    Therefore, the Heyting subalgebra $\langle S\rangle$ of $H$ generated by the finite set $S$ is finite, as well. But note that, for all $x \in H$,
    \begin{equation*}
        \bigvee_{s\in \langle S\rangle}(x\leftrightarrow s)=1,
    \end{equation*}
    since the join is taken over a set that extends $S$. So, by \cref{r:etale-finite-basic}\eqref{i: finite algebra condition}, the inclusion $\langle S\rangle \hookrightarrow H$ satisfies Jibladze's law, and so $H$ is étale-finite.
\end{proof}

We can now conclude the following:

\begin{theorem}[Finitely propositional $\leftrightarrow$ étale-finite]\label{theorem.fp.etale}
    For a Heyting algebra $H$, the following are equivalent:
    \begin{enumerate}
        \item \label{eq: esa et is finitely propositional topos} $\EsaEt(\Spec(H))$ is a topos;
        \item \label{eq:isomorphictotruthvalues} $H$ is isomorphic to the lattice of truth values of some finitely propositional topos;
        \item \label{eq: H etale-finite} $H$ is étale-finite.
    \end{enumerate}
\end{theorem}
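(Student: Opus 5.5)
We prove the cycle of implications \eqref{eq: H etale-finite}$\Rightarrow$\eqref{eq: esa et is finitely propositional topos}$\Rightarrow$\eqref{eq:isomorphictotruthvalues}$\Rightarrow$\eqref{eq: H etale-finite}; each step is essentially assembled from results already established.

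\eqref{eq: H etale-finite}$\Rightarrow$\eqref{eq: esa et is finitely propositional topos}: If $H$ is étale-finite, then $\Spec(H)$ is an étale-finite Esakia space by \cref{cor:etale-finite-iff-etale-finite}. Hence \cref{thm: elementary topos from etale-finite Heyting algebra} shows that $\EsaEt(\Spec(H))$ is an elementary topos. Concretely, it has finite limits by \cref{t:the-triangle-property-is-not-that-bad}, and it has power objects by \cref{thm: Power object theorem}.

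\eqref{eq: esa et is finitely propositional topos}$\Rightarrow$\eqref{eq:isomorphictotruthvalues}: Assume $\E \coloneqq \EsaEt(\Spec(H))$ is a topos. By \cref{thm: etale-finite yield finitely propositional}, which holds for an arbitrary Esakia space with no étale-finiteness hypothesis, $\E$ is finitely propositional. By \cref{p:subterminals-EsaEt} together with Esakia duality, $\Sub_\E(1)\cong \ClopUp(\Spec(H))\cong H$. The point to watch is that neither of these two ingredients uses étale-finiteness of the base; both only need $\Spec(H)$ to be Esakia, which holds automatically.

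\eqref{eq:isomorphictotruthvalues}$\Rightarrow$\eqref{eq: H etale-finite}: Let $\E$ be a finitely propositional topos with $H\cong \Sub_\E(1)$. By \cref{prop: weak etale condition} there is a finite subset $S\subseteq \Sub_\E(1)$ satisfying Jibladze's law relative to $S$. Transporting $S$ along the isomorphism gives such a finite subset of $H$, and \cref{prop: finite set to etale-finite} then yields that $H$ is étale-finite.

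None of these steps is a genuine obstacle, since the substantive work lies in the cited results. The only thing to check is that each citation applies without hidden hypotheses, in particular that the second implication relies on nothing about $\Spec(H)$ beyond its being an Esakia space.
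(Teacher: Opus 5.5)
Your proposal is correct and matches the paper's proof: the same three implications, each discharged by the same results. These are the main theorem for (3)$\Rightarrow$(1); finite propositionality of $\EsaEt(X)$, $\Sub(1)\cong\ClopUp(X)$ and Esakia duality for (1)$\Rightarrow$(2); and the two propositions on finitely propositional toposes and Jibladze's law over a finite subset for (2)$\Rightarrow$(3). Your observation that (1)$\Rightarrow$(2) needs only that $\Spec(H)$ is an Esakia space is accurate, and it is exactly why that step goes through.
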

\begin{proof}
    \eqref{eq: esa et is finitely propositional topos}$\Rightarrow$\eqref{eq:isomorphictotruthvalues} By \cref{thm: etale-finite yield finitely propositional}, the category 
    $\EsaEt(\Spec(H))$ is finitely propositional. 
    By \eqref{eq: esa et is finitely propositional topos}, it is also a topos.
    By \cref{p:subterminals-EsaEt}, $\Sub_{\EsaEt(\Spec(H))}(1) \cong \ClopUp(\Spec(H))$. By Esakia duality, $\ClopUp(\Spec(H)) \cong H$. Thus, $H$ is isomorphic to the lattice of truth values of a finitely propositional topos.
    
    \eqref{eq:isomorphictotruthvalues}$\Rightarrow$\eqref{eq: H etale-finite} This follows from \cref{prop: weak etale condition,prop: finite set to etale-finite}. 
    
    \eqref{eq: H etale-finite}$\Rightarrow$\eqref{eq: esa et is finitely propositional topos} If $H$ is étale-finite, then by \cref{thm: elementary topos from etale-finite Heyting algebra} $\EsaEt(\Spec(H))$ is an elementary topos.
\end{proof}

\begin{remark}[Functoriality?]
    We have shown that étale-finite Heyting algebras give rise to finitely propositional toposes, and, conversely, the Heyting algebra of truth values of every finitely propositional topos is étale-finite. We leave open whether these assignments can be made functorial, and what categorical relationship, if any, they induce.
\end{remark}

From \cref{theorem.fp.etale} we can deduce the following:

\begin{corollary}[Heyting algebras with no finitely propositional topos]\label{cor: examples of non etale-finite algebras}
    None of the following algebras arises as the lattice of truth values of a finitely propositional topos.
    \begin{enumerate}
        \item \label{i:chain-non-etale} Any infinite bounded chain (such as $\omega+1$ or $\omega+\omega^{\op}$).
        
        \item The free Heyting algebra $\mathcal{F}_{\HA}(X)$ over any nonempty set $X$.
        
        \item \label{eq: algebra H}
        The dual of the following Esakia space, where every point is isolated, except for $\infty$ which is an accumulation point:
        \[
            \begin{tikzpicture}
                \node at (0,0) {$\bullet$};
                \node at (1,0) {$\bullet$};
                \node at (2,0) {$\bullet$};
                \node at (3,0) {$\bullet$};
                \node at (4,0) {$\dots$};

                \foreach \x in {0,1,2,3} {
                    \draw (\x,0) -- (2,1);
                }
                
                \node at (2,1) {$\bullet$};
                \node at (2,1.3) {$\infty$};
                
                \node at (3,1) {$\dots$};
                \node at (4,1) {$\bullet$};
                \node at (5,1) {$\bullet$};
                \node at (6,1) {$\bullet$};
                \node at (7,1) {$\bullet$};
            \end{tikzpicture}
        \]
        that is, the subalgebra of $\cofnaturals\times \fincofnaturals$ given by the subset
        \[
        \{(A,B) : A=\varnothing \Leftrightarrow B \text{ is finite}\}.
        \]
    \end{enumerate}
\end{corollary}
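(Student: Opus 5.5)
By \cref{theorem.fp.etale}, a Heyting algebra is the lattice of truth values of a finitely propositional topos exactly when it is étale-finite. So the plan is to show, item by item, that none of the listed algebras is étale-finite, reusing the non-examples of \cref{ex: non-example}.

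For \eqref{i:chain-non-etale}, an infinite bounded chain has a finitely join-irreducible top: if $a \vee b = 1$ in a chain, then $\max(a,b)=1$. So \cref{c:finitely join irreducible} forces any étale-finite chain to be finite; this is \cref{ex: non-example}\eqref{i:chains}. For the free algebras, \cref{ex: non-example} already applies. If $\mathcal{F}_{\HA}(X)$ were étale-finite for nonempty $X$, its quotient $\mathcal{F}_{\HA}(1)$ would be étale-finite by \cref{rem:closure-properties}\eqref{eq: closure properties of etale-finite}, and hence locally finite by \cref{rem: local finiteness from etale-finiteness}. But the Rieger--Nishimura lattice is infinite and $1$-generated, a contradiction.

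For \eqref{eq: algebra H}, I will argue dually, using \cref{cor:etale-finite-iff-etale-finite}, that the pictured Esakia space $Z$ admits no continuous strict p-morphism $\acute{e}\colon Z\to X_0$ onto a finite poset. First I would check that the subalgebra description matches the picture. The space is the disjoint union of a copy of $\alpha\N$ (the bottom points $n$ together with the limit $\infty$, with each $n\le\infty$) and an isolated sequence of maximal points $m_k$ accumulating at $\infty$ from the other side. Concretely, $Z$ is the one-point compactification of $\N\sqcup\N$ in which the lower copy lies below $\infty$. A clopen upset then corresponds to a pair $(A,B)$ with $A$ the lower part and $B$ the upper isolated part; the upset and clopen conditions together give the stated condition $A=\varnothing\Leftrightarrow B$ finite. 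This bookkeeping is routine.

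Next suppose a strict p-morphism $\acute{e}$ exists. Each lower point $n$ satisfies ${\u}n=\{n,\infty\}$, a two-element chain, so $\acute{e}(n)<\acute{e}(\infty)$. The point $\infty$ is maximal, so ${\u}\infty=\{\infty\}$ and $\acute{e}(\infty)$ must be maximal in $X_0$. Each $m_k$ is also maximal and $\acute{e}(m_k)$ is maximal. Since $\acute{e}^{-1}[\{\acute{e}(\infty)\}]$ is clopen and $m_k\to\infty$, all but finitely many $m_k$ satisfy $\acute{e}(m_k)=\acute{e}(\infty)=:t$. Likewise, since $n\to\infty$ and fibres are clopen, almost all $n$ satisfy $\acute{e}(n)=t$. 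This contradicts $\acute{e}(n)<t$. Hence no such $\acute{e}$ exists.

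The main obstacle is getting the topology of the pictured space right, namely that both sequences converge to $\infty$, and matching it with the algebraic description. Once that is settled, the contradiction is the same mechanism as in \cref{ex: non-example}\eqref{ex: cofinite subsets}: the fibre of the point $\infty$ must be clopen, but strictness pins it to a single point.
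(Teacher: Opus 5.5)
Your proof is correct. For the infinite chains and the free algebras you argue exactly as the paper does, via \cref{ex: non-example} and \cref{theorem.fp.etale}.

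For item~(3) you take a different route. The paper stays on the algebra side: the projection onto the first coordinate is a surjective Heyting homomorphism onto $\cofnaturals$, which is not étale-finite by \cref{ex: non-example}. Since étale-finite algebras are closed under homomorphic images (\cref{rem:closure-properties}), the subalgebra cannot be étale-finite either. You instead work with the dual space and rule out a continuous strict p-morphism $\acute{e}$ to a finite poset directly. The clopen fibre over $\acute{e}(\infty)$ must contain almost all lower points $n$, whereas strictness on ${\u}n=\{n,\infty\}$ forces $\acute{e}(n)<\acute{e}(\infty)$.

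The two arguments are dual to each other. Your contradiction only uses $\infty$ and the lower points, that is, the closed upset $\N\cup\{\infty\}$, and this is precisely the dual of the quotient $\cofnaturals$. The paper's version is a one-liner, but it rests on the closure property, which comes from Kuznetsov's variety characterisation (\cref{prop: characterisation of etale maps}). Yours needs only the definition and \cref{cor:etale-finite-iff-etale-finite}, at the cost of checking that the picture matches the algebraic description.

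One small inaccuracy is in your last sentence. Here strictness does not pin the fibre over $\acute{e}(\infty)$ to a single point: that fibre may contain cofinitely many of the $m_k$. What strictness does is exclude every lower point $n$ from it, and that is what your argument actually uses. Your remarks about the $m_k$ are not needed.
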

\begin{proof}
    By \cref{ex: non-example}, all infinite bounded chains and all free Heyting algebras over a nonempty set are not étale-finite. Therefore, by \cref{theorem.fp.etale}, they cannot arise as the lattices of truth values of a finitely propositional topos. The algebra $H$ in item \eqref{eq: algebra H} is not étale-finite, since the projection onto the first coordinate is a surjective homomorphism to $\cofnaturals$, which is not étale-finite; since étale-finite Heyting algebras are closed under quotients (\cref{rem:closure-properties}\eqref{eq: closure properties of etale-finite}), it follows that $H$ is not étale-finite.
\end{proof}

The examples in \cref{cor: examples of non etale-finite algebras} are quite different in nature:
\begin{enumerate}
    \item 
    The chain $\omega+1$ is complete, which implies that there is a (localic) topos $\E$ such that $\Sub_{\E}(1)\cong \omega+1$. Instead, we do not know whether $\omega+\omega^{\op}$ arises as the lattice of truth values of some elementary topos. 
    \item 
    A solution for all free algebras would imply a solution for all algebras (because of the filter-quotient construction and the fact that every Heyting algebra is the quotient of a free Heyting algebra).
    \item
    The algebra $\cofnaturals\times \fincofnaturals$ is not étale-finite; however, since it is the product of a complete Heyting algebra and a Boolean algebra, we know that it is topos-admissible.\footnote{Note that, given two toposes $\E_{0}$ and $\E_{1}$, the lattice of truth values of $\E_{0}\times \E_{1}$ is isomorphic to $\Sub_{\E_{0}}(1)\times \Sub_{\E_{1}}(1)$ (since the terminal object is simply the pair $(1_{\E_{0}},1_{\E_{1}})$ and subobjects are computed pointwise).}
    On the other hand, we do not know whether its subalgebra $\{(A,B) : A=\varnothing \Leftrightarrow B$ is finite$\}$ is topos-admissible, although it is similar to the examples studied in this paper (for example, every principal upset in the dual space is finite).
\end{enumerate}

\section{Conclusions and future work}

We have extended the class of Heyting algebras for which a positive answer to the Heyting-to-topos problem is known. From our point of view, this raises some questions that deserve to be analysed in greater depth. 

It is well known that Pitts' uniform interpolation theorem \cite{Pitts1992} has connections to the model theory of Heyting algebras \cite{Ghilardi2002}: the study of model completions of Heyting algebras \cite{darnierejunkermodelcompletionofcoheyting}, unification properties \cite{ghilardi_unificationintuitionisticlogic} and the existence of fixed points \cite{Santocanale2018RuitenburgsTV}. It would be interesting to see whether the existence of a topos $\E$ whose lattice of truth values is isomorphic to a given Heyting algebra $H$, with $\E$ enjoying properties such as finite propositionality, could similarly be studied from this point of view (i.e., in terms of the consequences of these properties for the model theory of the Heyting algebra $H$).

Finally, the obvious question left by the present work is how to solve the problem for Heyting algebras that are not étale-finite. In \cref{sec:finitely-propositional-toposes}, we noted that it is necessary to extend the current techniques beyond the use of local-homeomorphism-like constructions on compact spaces.
In \cite[Ch.~6]{almeidaphdthesis}, this situation and possible paths forward are discussed. One interesting line of development might be to consider using \emph{locally Esakia spaces} (in the sense of \cite{locallyesakiaspaces}).
We hope that the present paper stimulates work in this direction and that the methods outlined can ultimately contribute to solvidng the Heyting-to-topos problem.

\appendix
\section{A characterisation of étale-finite Esakia spaces}
\label{s:app-char}

\begin{proposition}[Characterisation of étale-finite Esakia spaces] \label{p:finite-principal-upset-types}
    An Esakia space $X$ is étale-finite if and only if the following conditions hold:
    \begin{enumerate}
        \item for every $x \in X$, the principal upset $\u x$ is finite.
        \item for every finite poset $P$, the set
        \[
            X_P \coloneqq \{x\in X : {\u}x\cong P\}
        \]
        is clopen.
    \end{enumerate}
\end{proposition}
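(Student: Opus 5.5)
For the forward direction, I would fix a continuous strict p-morphism $\acute{e}\colon X\to X_0$ with $X_0$ finite. By \cref{rem: external characterisation of etale maps}, $\acute{e}$ restricts to an order-isomorphism ${\u}x\cong{\u}\acute{e}(x)$ for every $x \in X$. Condition (1) follows at once, since ${\u}\acute{e}(x)$ is a subset of the finite poset $X_0$. For condition (2), the same isomorphism gives
\[
X_P=\acute{e}^{-1}\bigl[\{x_0\in X_0 : {\u}x_0\cong P\}\bigr],
\]
which is the preimage of a subset of a finite discrete space under a continuous map, hence clopen.

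For the converse, I would build the finite target directly from the types of principal upsets.
\begin{enumerate}
\item First I show that only finitely many isomorphism types $P$ have $X_P\neq\varnothing$. Indeed, the sets $X_P$ are pairwise disjoint, by (2) each is clopen, and by (1) they cover $X$. Compactness of $X$ then leaves finitely many nonempty ones, say $X_{P_1},\dots,X_{P_k}$.
\item Next I define $X_0$ as a finite poset together with a map $\acute{e}$. A naive candidate is to let $X_0$ be the disjoint union $P_1\sqcup\dots\sqcup P_k$ and send $x\in X_{P_j}$ to the bottom of $P_j$, but this map is not order-preserving across types. The better choice is to send $x$ to the ``root'' of its type and to glue the types together: for $y\geq x$, the upset ${\u}y$ sits inside ${\u}x$ as a principal upset.
\item To handle the gluing canonically, I would refine the types into \emph{pointed} data. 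I fix representative posets $P_j$ and, for each $x$, consider the set of isomorphisms ${\u}x\to P_j$. Since a finite rooted poset can have nontrivial automorphisms, the choice of isomorphism has to be made locally constant; this is exactly what I need for $\acute{e}(y)$, with $y\geq x$, to be well defined from $\acute{e}(x)$.
\end{enumerate}

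The cleanest route I see is to define $X_0$ as the finite poset $\coprod_j P_j/\!\sim$, where $\sim$ identifies the principal upset ${\u}q$ of a point $q\in P_j$ with the representative $P_{j'}$ of its type. This requires compatible choices of isomorphisms, obtained by induction on the height of the types, starting from the maximal elements.

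The main obstacle is exactly this coherence together with the continuity of $\acute{e}$. Concretely, I need a partition of each $X_{P_j}$ into clopen pieces on which a chosen isomorphism ${\u}x\cong P_j$ can be taken so that it is compatible with the images of the points above $x$. I would try to obtain it as follows. For $x\in X_{P}$ and each $q\in P$, the set of $x'$ whose corresponding point above lies in a given clopen type set is clopen, because ${\d}$ of a clopen set is clopen in an Esakia space, and the finitely many points of ${\u}x$ can be separated by clopen upsets. Then, by compactness, $X_P$ is covered by finitely many clopen sets on which the ``shape with types of successors'' is constant. I expect a height induction with this clopen refinement to yield a continuous order-preserving map $\acute{e}$ that is an isomorphism on each principal upset, i.e.\ a continuous strict p-morphism by \cref{rem: external characterisation of etale maps}.

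If the coherence cannot be arranged globally, the fallback is to refine the target instead. I would take $X_0$ to be the finite poset of ``histories'': rooted isomorphism types in which each point is labelled by the clopen cell containing its preimage. Here the cells form a finite clopen partition of $X$ that is stable under the operations above, and this stability is the part to verify carefully using the Esakia property.
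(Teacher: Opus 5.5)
The forward direction is correct, and so is your observation that only finitely many $X_P$ are nonempty. The converse has a genuine gap, and it sits in the choice of target.

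In $\coprod_j P_j/\!\sim$, each $q\in P_j$ is identified with the root of the representative of the type of ${\u}q$. So the quotient has exactly one point per type, and your $\acute{e}$ sends any two points with isomorphic principal upsets to the same point. Such a map is not injective on ${\u}x$ as soon as ${\u}x$ contains two distinct points of the same type. Already for the three-element V-shaped poset (finite and discrete, hence étale-finite) the two maximal points are glued, and the quotient is a two-element chain.

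Coherent choices of isomorphisms cannot repair this, because the obstruction is not about automorphisms. Take the finite crown $a_1\leq b_1,b_2$, $a_2\leq b_2,b_3$, $a_3\leq b_3,b_1$. A strict p-morphism sending all $a_i$ to a single point $r$ would have to map $b_1,b_2,b_3$ into the two maximal points of ${\u}r$ with $\acute{e}(b_i)\neq\acute{e}(b_{i+1})$ (indices mod $3$). That is a proper $2$-colouring of an odd cycle, which is impossible. So a valid target must in general contain several points of the same type, and sending each $x$ to the root of a fixed representative of its type fails however the height induction is arranged.

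What is missing is an intrinsic, locally constant label on the points of $X$ that separates the points inside every principal upset. Your fallback with ``histories'' labelled by clopen cells points this way, and it is essentially the paper's route. But the property the cells need is not an unspecified stability. It is this: there is a finite clopen partition $B_1,\dots,B_r$ of $X$ such that distinct points of each ${\u}x$ lie in distinct blocks. This is the key lemma of the paper's proof, and you neither state nor prove it.

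It follows by counting plus compactness.
\begin{itemize}
\item For $x$ with $\lvert{\u}x\rvert=n$, choose pairwise disjoint clopens $U_y\ni y$ for $y\in{\u}x$.
\item Put $\mathcal{O}_x\coloneqq X_n\cap\Box\bigl(\bigcup_y U_y\bigr)\cap\bigcap_y{\d}U_y$, where $X_n=\{z:\lvert{\u}z\rvert=n\}$ is clopen by (2). Since $X$ is Esakia, $\mathcal{O}_x$ is a clopen neighbourhood of $x$.
\item For $z\in\mathcal{O}_x$, the $n$-element set ${\u}z$ lies in $\bigcup_y U_y$ and meets each of the $n$ disjoint sets $U_y$. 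Hence it meets each exactly once.
\item Finitely many $\mathcal{O}_x$ cover $X$, and the partition generated by all the chosen $U_y$ works.
\end{itemize}

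With this partition, take $\acute{e}(x)$ to be the set of blocks meeting ${\u}x$, ordered via their unique representatives in ${\u}x$.
\begin{itemize}
\item This is a concrete poset on a subset of $\{1,\dots,r\}$, so no choices of isomorphisms arise.
\item For $y\geq x$, $\acute{e}(y)$ is the principal upset of $\acute{e}(x)$ at the block of $y$, which gives strictness.
\item Each fibre is a Boolean combination of the clopens ${\d}B_i$ and ${\d}(B_i\cap{\d}B_j)$, which gives continuity.
\end{itemize}
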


\begin{proof}
    ($\Rightarrow$) Suppose $\acute e \colon X\to X_0$ is a continuous strict p-morphism with $X_0$ finite. For each $x \in X$, the restriction of $\acute{e}$ gives $\u x\cong\u\acute e(x)$; hence, each principal upset is finite. Moreover, for a finite poset $P$,
    \[
    X_P=\bigcup_{\{x_0\in X_0:\u x_0\cong P\}}\acute e^{-1}[\{x_0\}],
    \]
    which is clopen because $X_0$ is finite (and hence discrete) and $\acute e$ is continuous.

    ($\Leftarrow$)
    For each $n\in\N\smallsetminus\{0\}$, set
    \[
        X_n\coloneqq \{x\in X:\lvert{\u}x\rvert=n\}.
    \]
    Each $X_n$ is clopen: indeed, it is the finite union of the sets $X_P$, with $P$
    ranging over representatives of the finitely many isomorphism classes of posets of
    cardinality $n$.

    \begin{claim} \label{claim:finite-clopen-partition-separating-principal-upsets}
        There is a finite clopen partition
        \[
            B_1,\dots,B_r
        \]
        of $X$ such that, for every $x\in X$, distinct points of ${\u}x$ lie in distinct
        blocks of the partition.
    \end{claim}

    \begin{proof}[Proof of the claim]
        Let $x\in X$, and write $n\coloneqq \lvert{\u}x\rvert$. Since the underlying topological space of a Priestley space is a Stone space, we may choose pairwise
        disjoint clopen neighbourhoods
        \[
            U^x_y\qquad(y\in{\u}x)
        \]
        such that $y\in U^x_y$ for each $y\in{\u}x$. Define
        \[
            \mathcal{O}_x
            \coloneqq
            X_n
            \cap
            \Box\left(\bigcup_{y\in{\u}x}U^x_y\right)
            \cap
            \bigcap_{y\in{\u}x}{\d}U^x_y.
        \]
        Then $\mathcal{O}_x$ is clopen. Indeed, $X_n$ is clopen, the set
        $\bigcup_{y\in{\u}x}U^x_y$ is clopen, and both $\Box W$ and ${\d}W$ are clopen
        whenever $W$ is clopen in an Esakia space.

        Moreover, $x\in\mathcal{O}_x$. Thus the family $(\mathcal{O}_x)_{x\in X}$
        covers $X$, and hence, by compactness, there are $x^1,\dots,x^m\in X$ such that
        \[
            X=\mathcal{O}_{x^1}\cup\cdots\cup\mathcal{O}_{x^m}.
        \]

        Collect the finitely many clopen subsets
        \[
            U^{x^a}_y
            \qquad
            (1\leq a\leq m,\ y\in{\u}x^a),
        \]
        and let $B_1,\dots,B_r$ be the finite clopen partition of $X$ generated by them.

        We show that this partition has the desired property. Let $z\in X$, and choose
        $a$ such that $z\in\mathcal{O}_{x^a}$. Set
        \[
            n\coloneqq \lvert{\u}x^a\rvert.
        \]
        Since $z\in X_n$, we have $\lvert{\u}z\rvert=n$. Moreover, by the definition of
        $\mathcal{O}_{x^a}$, the set ${\u}z$ is contained in
        \[
            \bigcup_{y\in{\u}x^a}U^{x^a}_y
        \]
        and meets each of the pairwise disjoint clopens $U^{x^a}_y$. Since both ${\u}z$
        and ${\u}x^a$ have cardinality $n$, the set ${\u}z$ meets each $U^{x^a}_y$ in
        exactly one point. Hence, any two distinct points of ${\u}z$ are separated by one of
        the chosen clopens, and therefore lie in distinct blocks of the generated partition.
        This proves the claim.
    \end{proof}

    We now construct the finite target poset. For $x\in X$, define
    \[
        S_x\coloneqq \{i\in\{1,\dots,r\}:{\u}x\cap B_i\neq\varnothing\}.
    \]
    By \cref{claim:finite-clopen-partition-separating-principal-upsets}, for each $i\in S_x$ there is a unique point
    \[
        x_i\in{\u}x\cap B_i.
    \]
    Define a partial order on $S_x$ by
    \[
        i\leq_x j
        \quad\Longleftrightarrow\quad
        x_i\leq x_j.
    \]
    Let
    \[
        q_x\coloneqq (S_x,\leq_x).
    \]
    Thus, $q_x$ is a finite poset, and the map
    \begin{align*}
        {\u}x &\longrightarrow q_x\\
        y &\longmapsto \text{the unique }i\text{ such that }y\in B_i
    \end{align*}
    is an order-isomorphism.

    Let
    \[
        Q\coloneqq \{q_x:x\in X\}.
    \]
    This is finite, since there are only finitely many partial orders on subsets of
    $\{1,\dots,r\}$.

    If $q=(S,\leq_q)\in Q$ and $i\in S$, write
    \[
        q_{\geq i}\coloneqq \{j\in S:i\leq_q j\},
    \]
    with the induced order. Notice that $q_{\geq i}\in Q$: indeed, if $q=q_x$ and
    $y\in{\u}x$ is the unique point lying in $B_i$, then
    \[
        q_y=(q_x)_{\geq i}.
    \]

    Define an order on $Q$ by declaring, for $q=(S,\leq_q)$,
    \[
        q\leq_Q q'
        \quad\Longleftrightarrow\quad
        q'=q_{\geq i}\text{ for some }i\in S.
    \]
    This is a partial order. Reflexivity follows because each $q_x$ has a least element,
    namely the block containing $x$. Transitivity is immediate. For antisymmetry, if
    $q\leq_Q q'$ and $q'\leq_Q q$, then the underlying set of $q'$ is contained in the
    underlying set of $q$, and conversely. Hence, the two underlying sets are equal; since
    principal upsets carry the induced order, it follows that $q=q'$.

    We regard $Q$ as a finite Esakia space with the discrete topology. Define
    \begin{align*}
        \acute{e}\colon X &\longrightarrow Q,\\
        x & \longmapsto q_x.
    \end{align*}
    
    We prove that $\acute{e}$ is continuous. Since $Q$ is finite and discrete, it suffices to show that
    each fibre of $\acute{e}$ is clopen. For $i\in\{1,\dots,r\}$, put
    \[
        D_i\coloneqq \{x\in X:{\u}x\cap B_i\neq\varnothing\}
        =
        {\d}B_i.
    \]
    This is clopen because $X$ is Esakia. For $i,j\in\{1,\dots,r\}$, put
    \[
        E_{ij}\coloneqq
        \{x\in X:\text{there are }y\in{\u}x\cap B_i
        \text{ and }z\in{\u}x\cap B_j\text{ with }y\leq z\}.
    \]
    Equivalently,
    \[
        E_{ij}={\d}(B_i\cap{\d}B_j).
    \]
    Hence $E_{ij}$ is clopen, since $B_j$ is clopen, ${\d}B_j$ is clopen, and $X$ is
    Esakia.

    Fix $q=(S,\leq_q)\in Q$. Then
    \[
        \acute{e}^{-1}[\{q\}]
        =\bigcap_{i\in S}D_i
        \cap
        \bigcap_{i\notin S}(X\smallsetminus D_i)\cap \bigcap_{\substack{i,j\in S\\ i\leq_q j}}E_{ij}
        \cap \bigcap_{\substack{i,j\in S\\ i\not\leq_q j}}(X\smallsetminus E_{ij}).
    \]
    Indeed, the first two intersections express exactly that the set of colours appearing above a point is $S$, while the last two intersections express exactly that the order between those colours is $\leq_q$. Therefore, each fibre of $\acute{e}$ is clopen, and so $\acute{e}$ is continuous.

    It remains to show that $\acute{e}$ is a strict p-morphism. Let $x\in X$, and set
    $q\coloneqq \acute{e}(x)$. Consider the map
    \begin{align*}
        \alpha_x\colon {\u}x &\longrightarrow q\\
        y &\longmapsto \text{the unique }i\text{ such that }y\in B_i.
    \end{align*}
    By construction, $\alpha_x$ is an order-isomorphism. Also, the map
    \begin{align*}
        \beta_q\colon q &\longrightarrow {\u}_Q q\\
        i &\longmapsto q_{\geq i}
    \end{align*}
    is an order-isomorphism. It is surjective by the definition of $\leq_Q$. It is injective
    because, if $q_{\geq i}=q_{\geq j}$, then $i\in q_{\geq j}$ and $j\in q_{\geq i}$, hence
    $j\leq_q i$ and $i\leq_q j$, and therefore $i=j$.

    We now check that $\beta_q$ preserves and reflects the order. If $i\leq_q j$, then
    $q_{\geq j}=(q_{\geq i})_{\geq j}$, and hence $q_{\geq i}\leq_Q q_{\geq j}$. Conversely,
    if $q_{\geq i}\leq_Q q_{\geq j}$, then
    \[
        q_{\geq j}=(q_{\geq i})_{\geq k}
    \]
    for some $k\in q_{\geq i}$. Thus $q_{\geq j}=q_{\geq k}$, and by injectivity $j=k$.
    Since $k\in q_{\geq i}$, it follows that $i\leq_q j$.

    For every $y\in{\u}x$ we have
    \[
        \acute{e}(y)=\beta_q(\alpha_x(y)).
    \]
    Hence
    \[
        \acute{e}\!\restriction_{{\u}x}\colon {\u}x\longrightarrow {\u}_Q \acute{e}(x)
    \]
    is an order-isomorphism.

    By \cref{rem: external characterisation of etale maps}, $\acute{e}$ is a strict p-morphism.
    Thus $\acute{e}\colon X\to Q$ is a continuous strict p-morphism to a finite Esakia space.
    Therefore, $X$ is étale-finite.
\end{proof}

\begin{remark}[\'Etale-finite $\Leftrightarrow$ principal upsets are finite and locally constant]
    The characterisation in \cref{p:finite-principal-upset-types} immediately entails the following similar characterisation: 
    \begin{quotation}
        An Esakia space $X$ is étale-finite if and only if, for every $x \in X$, $\u x$ is finite and there is an open neighbourhood $U$ of $x$ such that, for every $y \in U$, the posets $\u x$ and $\u y$ are isomorphic.
    \end{quotation}
    Indeed, it is clear that the condition in \cref{p:finite-principal-upset-types} implies that principal upsets are finite and locally constant.
    Conversely, if principal upsets are finite and locally constant, then all the sets of the form $X_P$ ($P$ a finite poset) are open and, since they partition $X$, also closed.
\end{remark}

\section{\texorpdfstring{$\KM$}{KM}-algebras}\label{sec: a further example}

As a further example of étale-finite Heyting algebras, we show that the Heyting reducts of those $\KM$-algebras (a well-studied class of modal Heyting algebras \cite{Muravitsky2014,Esakia2006}) that satisfy linearity and bounded-depth axioms (see e.g.\ \cite{Litak2014}) are étale-finite. This provides an especially natural class of examples for which our \cref{thm: elementary topos from etale-finite Heyting algebra} provides a positive answer to the Heyting-to-topos problem.

\begin{definition}[$\KM$-algebras, $\KM$ Heyting algebras] \label{KM-algebra definition}
    Let $H$ be a Heyting algebra. We say that a pair $(H,\Box)$, where $\Box \colon H\to H$ is a unary function, is a \emph{$\KM$-algebra}\footnote{Such algebras are also known as \emph{frontons} \cite{Esakia2006}, \emph{$\Delta$-pseudo-Boolean algebras} \cite{Muravitsky2014}, and by other names in the provability logic literature \cite{Visser1982}. Given their logical counterpart in the Kuznetsov-Muravitsky calculus, we adopt the present terminology, given in \cite[Def.~9]{Muravitsky2014}.}
    if the following conditions hold for all $a,b\in H$:
    \begin{enumerate}
        \item (Normality) $\Box(a\wedge b)=\Box a\wedge \Box b$ and $\Box\top=\top$;
        \item (Inflationarity) $a\leq \Box a$;
        \item (Minimality) $\Box a\leq b\vee (b\rightarrow a)$;
        \item (Density) $\Box a\rightarrow a\leq a$.
    \end{enumerate}
    We say that $H$ is a \emph{$\KM$ Heyting algebra} if it is the Heyting reduct of a (necessarily unique\footnote{This uniqueness stems from the fact that, for each $a\in H$, $\Box a$ is the least dense element above $a$, as explained for example in \cite[Prop.~5]{Esakia2006}.}) $\KM$-algebra. 
\end{definition}

$\KM$-algebras have recently been the subject of attention due to their intimate connection with Heyting algebras; for example, every variety of Heyting algebras is generated by its $\KM$ Heyting algebras\footnote{This result is due to Alexander Kuznetsov. An algebraic proof is the main result of \cite{explicitkuznetsovmuravitsky}.}. Note also that a locale (i.e., a complete Heyting algebra) is a $\KM$ Heyting algebra if and only if it is scattered (i.e., if the Booleanisation of every open sublocale is open), see \cite{Esakia2000}. The $\KM$-Gödel algebras---i.e., the $\KM$-algebras satisfying Gödel's axiom $(p\rightarrow q)\vee (q\rightarrow p)$---appear naturally in the context of the topos of trees, used in functional programming (see e.g.\ \cite{Litak2014}).

$\KM$ Heyting algebras admit the following well-known dual characterisation (see e.g.\ \cite[Prop.~4.8]{CastiglioniSagastumeEtAl2010}):

\begin{proposition}[Dual characterisation of $\KM$ Heyting algebras]\label{prop: characterisation of KM}
    A Heyting algebra $H$ is a $\KM$ Heyting algebra if and only if $X=\Spec(H)$ has the property that, for each clopen $W$, the set of its maximal elements
    \[
    \max(W) \coloneqq \Bigl\{x\in W : \text{ for all } y \in W \text{ if }x\leq y \text{ then } x = y\Bigr\}
    \]
    is clopen.
\end{proposition}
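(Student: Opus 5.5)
The plan is to translate the $\KM$ operator into the dual space. I will use the fact recalled in the footnote to \cref{KM-algebra definition}: if a $\KM$-structure exists on $H$, then $\Box a$ must be the least \emph{dense} element above $a$, that is, the least $b\geq a$ with $b\to a=a$. On the dual side, for a clopen upset $U$ of the Esakia space $X$, write $M_U\coloneqq\max(X\smallsetminus U)$. The central claim is that an upset $V\supseteq U$ satisfies $V\to U=U$ if and only if $M_U\subseteq V$. Both directions run through Esakia's fact that every point of the closed set $X\smallsetminus U$ lies below a maximal point of $X\smallsetminus U$.

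Proof of the claim. Recall that $V\to U=\{x : {\u}x\cap V\subseteq U\}$.
\begin{enumerate}
\item Suppose $V\to U=U$ and let $x\in M_U$. Maximality gives ${\u}x\smallsetminus U=\{x\}$. Since $x\notin U=V\to U$, the set ${\u}x\cap V$ is not contained in $U$, so the only candidate forces $x\in V$.
\item Conversely, suppose $M_U\subseteq V$ and let $x\notin U$. Pick $m\in M_U$ with $m\geq x$. Then $m\in({\u}x\cap V)\smallsetminus U$, so $x\notin V\to U$.
\end{enumerate}
The set $U\cup M_U$ is an upset, because anything strictly above a point of $M_U$ lies in $U$. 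Hence $U\cup M_U$ is the least dense upset above $U$.

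($\Leftarrow$) Assume $\max(W)$ is clopen for every clopen $W$. Then $\Box U\coloneqq U\cup M_U$ is a clopen upset, and I check the four axioms dually.
\begin{enumerate}
\item Inflationarity is immediate.
\item Density is the claim, since $\Box U\to U=U$.
\item Minimality, $\Box U\subseteq V\cup(V\to U)$: take $x\in M_U\smallsetminus V$. Then ${\u}x\cap V\subseteq{\u}x\smallsetminus\{x\}\subseteq U$, so $x\in V\to U$.
\item Normality: $\Box X=X$ is clear. For $\Box(U\cap U')=\Box U\cap\Box U'$, the inclusion $\subseteq$ follows from monotonicity, which holds because $\Box U$ is the least dense upset above $U$ and density over $U'\supseteq U$ is checked on the same complement. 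For $\supseteq$, take $x\in \Box U\cap\Box U'$ with $x\notin U\cap U'$, say $x\notin U$, so $x\in M_U$. Any $y>x$ lies in $U$, and also in $U'$ because $\Box U'$ is an upset containing $x$ and $y>x$ forces $y\in U'$ (if $x\in M_{U'}$ then $y\in U'$; if $x\in U'$ then too). Hence $x$ is maximal in $X\smallsetminus(U\cap U')$.
\end{enumerate}

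($\Rightarrow$) Suppose $H$ carries a $\KM$-structure. For a clopen upset $U$, the element $\Box U$ is the least dense clopen upset above $U$. By the claim, the clopen upsets containing $U$ that are dense over it are exactly those containing $C\coloneqq U\cup M_U$. Now $C$ is closed, since $\max$ of a closed set in an Esakia space is closed (this is a standard fact I would cite), and $C$ is an upset. In a Priestley space a closed upset is the intersection of the clopen upsets containing it. So a \emph{least} such clopen upset exists only if $C$ is itself clopen, and then $M_U=C\smallsetminus U$ is clopen.

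To pass from complements of clopen upsets to an arbitrary clopen $W$, I use $\max(W)=\max({\d}W)$. Indeed, a point maximal in ${\d}W$ lies below some $w\in W$, hence equals $w$, and is maximal in $W$; conversely, a point maximal in $W$ is maximal in ${\d}W$ by the same argument. Since $X$ is Esakia, ${\d}W=X\smallsetminus U$ for the clopen upset $U=X\smallsetminus{\d}W$.

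I expect the main obstacle to be the ($\Rightarrow$) direction: deducing clopenness of $M_U$ from the existence of a least dense clopen upset requires the closedness of $U\cup M_U$ together with the Priestley fact about intersections of clopen upsets. The rest reduces to routine order-theoretic checks once the claim is in place.
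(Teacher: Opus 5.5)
Your proof is correct, and it differs from the paper in a trivial sense: the paper gives no argument for this proposition at all. It quotes the result as well known and cites \cite{CastiglioniSagastumeEtAl2010}. You instead give a self-contained duality proof. Its core is the identification $\Box U=U\cup\max(X\smallsetminus U)$ for clopen upsets $U$, which you obtain from your claim characterising the upsets $V$ with $V\to U=U$. You then reduce an arbitrary clopen $W$ to this case via $\max(W)=\max({\d}W)$. This makes explicit the dual description of the modal operator, which the citation leaves hidden.

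One stated fact needs correcting. It is not true that $\max$ of an arbitrary \emph{closed} subset of an Esakia space is closed. For example, in $\alpha\N\times\mathbf{2}$ (product order, $\alpha\N$ discretely ordered), the closed set $C=(\alpha\N\times\{0\})\cup\{(\infty,1)\}$ has $\max(C)=(\N\times\{0\})\cup\{(\infty,1)\}$, whose closure contains $(\infty,0)$.

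What is true is the \emph{clopen} case, and that is all you need, since $X\smallsetminus U$ is clopen. It is also the fact the paper itself invokes in the proof of \cref{prop: etale-finite yields KM}. To prove it, take $x\in W\smallsetminus\max(W)$, pick $y\in W$ with $x<y$, and pick a clopen upset $V$ with $y\in V$ and $x\notin V$. Then $(W\cap{\d}(V\cap W))\smallsetminus V$ is a clopen neighbourhood of $x$ disjoint from $\max(W)$.

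In the ($\Rightarrow$) direction you could even bypass this fact and the Priestley intersection argument. Suppose some $x\in\Box U\smallsetminus U$ lay strictly below a point $y\notin U$. A clopen upset $V$ containing $y$ but not $x$ then gives $x\notin V\cup(V\to U)$, contradicting minimality. So $\Box U\smallsetminus U\subseteq\max(X\smallsetminus U)$. Combined with your claim (density gives the reverse inclusion), you get $\Box U=U\cup\max(X\smallsetminus U)$ directly.

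Finally, your justification of monotonicity in the normality check is vague, but the direct verification is one line. If $U\subseteq U'$, a point of $\max(X\smallsetminus U)$ lying outside $U'$ is maximal in the smaller set $X\smallsetminus U'$, hence belongs to $\max(X\smallsetminus U')$.
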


From this characterisation, we can see that every étale-finite Esakia space is dual to a $\KM$ Heyting algebra:

\begin{proposition}[\'Etale-finite $\Rightarrow$ $\KM$]\label{prop: etale-finite yields KM}
    For every étale-finite Esakia space $X$, if $W\subseteq X$ is clopen, then $\max (W)$ is clopen. 
    Dually, every étale-finite Heyting algebra is a $\KM$ Heyting algebra.
\end{proposition}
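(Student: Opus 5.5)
We fix a continuous strict p-morphism $\acute{e}\colon X\to X_0$ with $X_0$ a finite poset, and for a clopen $W\subseteq X$ we write $\max(W)$ as a finite union of clopens cut out by $\acute{e}$ and the Esakia operations. A point $x\in W$ fails to be maximal in $W$ exactly when there is $y\in W$ with $x<y$. By \cref{rem: external characterisation of etale maps}, $\acute{e}$ restricts to an order-isomorphism ${\u}x\cong{\u}\acute{e}(x)$. Hence $x<y$ forces $\acute{e}(x)<\acute{e}(y)$, and in particular $\acute{e}(y)\neq\acute{e}(x)$. So $x\in W$ is non-maximal iff $x\in{\d}\bigl(W\cap\acute{e}^{-1}[X_0\smallsetminus\{\acute{e}(x)\}]\bigr)$: the witness $y$ lies in a different fibre, and conversely any $y\geq x$ in a different fibre satisfies $y\neq x$.

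This gives the identity
\[
\max(W)=\bigcup_{x_0\in X_0}\Bigl(W\cap\acute{e}^{-1}[\{x_0\}]\cap\bigl(X\smallsetminus{\d}\bigl(W\cap\acute{e}^{-1}[X_0\smallsetminus\{x_0\}]\bigr)\bigr)\Bigr),
\]
obtained by splitting $W$ along the fibres of $\acute{e}$. The fibres $\acute{e}^{-1}[\{x_0\}]$ are clopen because $\acute{e}$ is continuous and $X_0$ is discrete. Consequently $W\cap\acute{e}^{-1}[X_0\smallsetminus\{x_0\}]$ is clopen, and its down-closure is clopen because $X$ is Esakia. Therefore each term of the finite union is clopen, and so is $\max(W)$.

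To verify the identity, we argue as follows. For $x\in W$ in the fibre over $x_0$: if $x$ is not maximal, take $y\in W$ with $x<y$. Then $\acute{e}(y)\neq x_0$, as argued above, so $x$ lies in the down-closure. Conversely, if $x\leq y$ with $y\in W$ in another fibre, then $y\neq x$, so $x$ is not maximal. The one point requiring care is that strictness rules out $x<y$ with $\acute{e}(x)=\acute{e}(y)$. This holds because injectivity of $\acute{e}$ on ${\u}x$ and $\acute{e}(x)=\acute{e}(y)$ with $x,y\in{\u}x$ give $x=y$.

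The dual statement then follows. By \cref{cor:etale-finite-iff-etale-finite}, an étale-finite Heyting algebra $H$ has an étale-finite dual $\Spec(H)$. By the first part, every clopen of $\Spec(H)$ has a clopen set of maximal elements, so \cref{prop: characterisation of KM} shows that $H$ is a $\KM$ Heyting algebra. I expect no serious obstacle; the only subtle point is the strictness argument for the fibre splitting.
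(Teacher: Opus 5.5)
Your proof is correct, and it is a streamlined version of the paper's argument rather than a genuinely new route.

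\textbf{Openness.} The paper argues pointwise: for each $x\in\max(W)$ it exhibits the clopen neighbourhood $W\cap\Box(W\climp\acute{e}^{-1}[\{\acute{e}(x)\}])$ and checks that it is contained in $\max(W)$. For any $E\subseteq X$ we have $\Box(W\climp E)=X\smallsetminus{\d}(W\smallsetminus E)$. So your term indexed by $x_0$ is exactly the paper's neighbourhood for any $x$ lying over $x_0$. Your extra intersection with the fibre $\acute{e}^{-1}[\{x_0\}]$ is harmless but redundant: a point of $W$ outside ${\d}(W\cap\acute{e}^{-1}[X_0\smallsetminus\{x_0\}])$ already lies over $x_0$, by taking $y=x$ itself.

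\textbf{Closedness.} Here the two routes differ. The paper obtains closedness separately, by citing Esakia's results that the clopen $W$ is itself an Esakia space and that its set of maximal points is closed. You notice instead that these neighbourhoods depend only on $\acute{e}(x)$. This turns the paper's open cover of $\max(W)$ into a finite union of clopens indexed by $X_0$. Closedness then comes for free, with no appeal to Esakia's theorem on maximal points, so your argument is fully self-contained.

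What the paper's split buys is a separation of concerns. Openness is the part that uses étale-finiteness, while closedness is a fact valid in every Esakia space.
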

\begin{proof}
    Let $\acute{e}\colon X\to X_{0}$ be a continuous strict p-morphism. By \cref{prop: characterisation of KM}, it suffices to show that, for each clopen subset $W$ of $X$, $\max(W)$ is clopen.
    
    Let $W$ be a clopen subset of $X$, and $x\in \max(W)$. Then, $W\cap \Box(W\climp \acute{e}^{-1}[\acute{e}(x)])$ is a clopen subset (since $X_0$ is discrete, $\acute{e}$ is continuous and $X$ is an Esakia space), and $x\in W\cap \Box(W\climp \acute{e}^{-1}[\acute{e}(x)])$ since $x$ is maximal. Moreover, $W \cap \Box(W\climp \acute{e}^{-1}[\acute{e}(x)])$ is contained in $\max(W)$: if $y \in W \cap \Box(W\climp \acute{e}^{-1}[\acute{e}(x)])$ and $y\leq y'\in W$, then, by definition, $\acute{e}(y)=\acute{e}(y')=\acute{e}(x)$, and, since $y\leq y'$, we have $y=y'$. Thus, $\max(W)$ is open. It is also closed because $X$ is an Esakia space: by \cite[Thm.~3.2.6]{Esakiach2019HeyAlg}, $W$ is an Esakia space with the induced topology and order, and so, by \cite[Thm.~3.2.3]{Esakiach2019HeyAlg}, $\max(W)$ is closed in the subspace topology; since $W$ is clopen, $\max(W)$ is closed in $X$. Thus $\max(W)$ is clopen, as desired.
\end{proof}

We recall that a \emph{Gödel algebra} is a Heyting algebra that validates the Gödel--Dummett axiom $(p\rightarrow q)\vee (q\rightarrow p)$.
It is known that a Heyting algebra is a Gödel algebra if and only if it belongs to a variety generated by a class of chains.

\begin{definition}[$\KM$-tabular Gödel algebra]
     A $\KM$-\emph{tabular Gödel algebra} is a $\KM$ Heyting algebra $H$ for which there is a finite chain $[k]\coloneqq 1<\dots<k$ for $k\geq 1$ such that $H\in \mathsf{Var}_{\mathbf{HA}}([k])$.
\end{definition}
Every $\KM$-tabular Gödel algebra is in particular a G\"odel algebra.

We can thus show that all algebras belonging to a natural class of $\KM$ Heyting algebras are étale-finite:

\begin{proposition}[For G\"odel algebras, étale-finite $\Leftrightarrow$ $\KM$-tabular]
    For every Heyting algebra $H$, the following are equivalent:
    \begin{enumerate}
        \item \label{eq: Gödel + etale-finite} $H$ is a Gödel algebra and is étale-finite;
        \item \label{eq: KM-tabular Godel} $H$ is a $\KM$-tabular Gödel algebra.
    \end{enumerate}
\end{proposition}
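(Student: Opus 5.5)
We prove the two implications separately.

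\eqref{eq: Gödel + etale-finite}$\Rightarrow$\eqref{eq: KM-tabular Godel}. Suppose $H$ is a Gödel algebra and is $H_0$-étale for some finite Heyting algebra $H_0$.
\begin{enumerate}
    \item By \cref{prop: etale-finite yields KM}, $H$ is a $\KM$ Heyting algebra.
    \item By \cref{rem: local finiteness from etale-finiteness}, $H\in\mathsf{Var}(H_0)$. We may take $H_0$ to be a subalgebra of $H$ (\cref{r:etale-finite-basic}\eqref{i: finite algebra condition}), so $H_0$ is a finite Gödel algebra.
    \item A finite Gödel algebra is a subdirect product of its subdirectly irreducible quotients. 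Each of these is a finite chain with at most $\lvert H_0\rvert$ elements.
    \item Every finite chain $[j]$ with $j\le k$ is a subalgebra, indeed a homomorphic image, of $[k]$. Hence $\mathsf{Var}(H_0)\subseteq\mathsf{Var}([k])$ for $k=\lvert H_0\rvert$, and $H$ is $\KM$-tabular.
\end{enumerate}
For a more direct bound on $k$, note that every subdirectly irreducible quotient of $H$ is étale-finite with a join-irreducible top, so it is finite of size at most $\lvert H_0\rvert$ by \cref{c:finitely join irreducible}.

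\eqref{eq: KM-tabular Godel}$\Rightarrow$\eqref{eq: Gödel + etale-finite}. Here we work dually. Let $X=\Spec(H)$.
\begin{enumerate}
    \item Since $H\in\mathsf{Var}([k])$, $H$ is a Gödel algebra of depth at most $k$. Dually, each principal upset ${\u}x$ is a finite chain with at most $k$ elements. I would justify this by standard Esakia theory: the Gödel axiom corresponds to principal upsets being chains, and the bounded-depth axiom true in $[k]$ bounds chain length by $k$.
    \item By \cref{p:finite-principal-upset-types}, it then suffices to show that, for each $n\le k$, the set $X_n=\{x: \lvert{\u}x\rvert=n\}$ is clopen. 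Since principal upsets are chains, ${\u}x\cong[n]$ exactly when $x\in X_n$, so these are the sets $X_P$ of that proposition.
    \item The $\KM$ hypothesis enters through \cref{prop: characterisation of KM}: $\max(W)$ is clopen for every clopen $W$. Set $M_1=\max(X)$, which is clopen. Inductively, set $M_{j+1}=\max(X\smallsetminus (M_1\cup\dots\cup M_j))$, which is again clopen since it is the max of a clopen set.
    \item I claim that $M_j=X_j$. A point $x$ lies in $M_j$ iff it is maximal after removing the points of height $<j$. Because ${\u}x$ is a finite chain, this happens iff ${\u}x$ has exactly $j$ elements. This follows by induction on $j$: the part of ${\u}x$ strictly above $x$ is contained in lower levels, and $x$ is not itself in a lower level.
    \item Since depth is at most $k$, the sets $M_1,\dots,M_k$ partition $X$ into clopens. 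Hence $X$ is étale-finite, and $H$ is étale-finite by \cref{cor:etale-finite-iff-etale-finite}.
\end{enumerate}

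The main obstacle is the duality step: translating membership in $\mathsf{Var}([k])$ into the statement that principal upsets are chains of size at most $k$. I would cite the standard correspondence for the Gödel–Dummett axiom and the bounded-depth formulas $bd_k$, noting that $[k]$ validates $bd_k$. The rest is the layering argument above.
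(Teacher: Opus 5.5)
Your proof is correct. It differs from the paper's mainly in the direction (1)$\Rightarrow$(2).

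\textbf{Direction (1)$\Rightarrow$(2).} The paper argues dually. It takes a continuous strict p-morphism from $X$ onto a finite poset $X_0$, whose principal upsets are then finite chains. It collapses $X_0$ by depth onto a finite chain $X_0'$ and notes that the composite is still a continuous strict p-morphism. By Kuznetsov's duality between strictness and Jibladze's law, $H$ is then étale over the finite chain $\ClopUp(X_0')$, and so lies in the variety it generates. You instead stay on the algebraic side:
\begin{enumerate}
\item replacing $H_0$ by its image makes it a finite Gödel subalgebra of $H$;
\item the remark on local finiteness gives $H\in\mathsf{Var}(H_0)$;
\item Birkhoff's theorem, together with the fact that subdirectly irreducible Gödel algebras are chains (their top is join-irreducible), puts $H_0$, and hence $H$, in $\mathsf{Var}([\lvert H_0\rvert])$.
\end{enumerate}
Your route is shorter and needs no construction on the dual poset. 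The paper's route yields slightly more: the finite algebra witnessing étale-finiteness can itself be chosen to be a chain.

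\textbf{Direction (2)$\Rightarrow$(1).} You use the same ingredients as the paper: the Hosoi/bounded-depth description of $\mathsf{Var}([k])$, and the layering $M_{j+1}=\max(X\smallsetminus(M_1\cup\dots\cup M_j))$ obtained from the $\KM$ characterisation. The paper then builds the depth map $d\colon X\to k^{*}$ and checks strictness by counting. You instead feed the clopen layers into the appendix characterisation of étale-finite Esakia spaces, which does that work for you.

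One small point: membership in $\mathsf{Var}([k])$ actually bounds principal upsets by $k-1$ elements, since the dual of $[k]$ is a $(k-1)$-element chain. The weaker bound $k$ that you state is all you use, so this does not affect the argument.
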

\begin{proof}
    It is well known that a Heyting algebra is a Gödel algebra if and only if every principal upset in its dual Esakia space is a chain; see \cite[Thm.~1.5]{hornlinearheyting}.
    Moreover, it was essentially shown by Hosoi \cite{Hosoi1967-HOSOIL-2} that, for every $k\geq 0$, a Heyting algebra belongs to the variety generated by $[k+1]$ if and only if every principal upset in its dual Esakia space is a chain of cardinality at most $k$ (see also \cite{Dunn1971}).

    In the following, set $X \coloneqq \Spec(H)$.

    \eqref{eq: Gödel + etale-finite}$\Rightarrow$\eqref{eq: KM-tabular Godel} Since $H$ is a Gödel algebra, every principal upset in $X$ is a chain. Since $H$ is étale-finite, there is a finite poset $X_{0}$ and a continuous strict p-morphism $\acute{e}\colon X\to X_{0}$.
    Up to replacing $X_0$ by the image of $\acute{e}$, we can assume that $\acute{e}$ is surjective.
    Therefore, every principal upset of $X_0$ is a finite chain.
    Let $X_0'$ be the quotient of $X_0$ obtained by identifying points of the same depth; then, $X_0'$ is a finite chain.
    Then, the quotient $\pi \colon X_0 \twoheadrightarrow X_0'$ is a strict p-morphism, and so the composite $X \xrightarrow{\acute{e}}X_0 \xrightarrow{\pi} X_0'$ is a continuous strict p-morphism.
    By \cref{t:strict-dual-to-Jib}, the dual homomorphism
    \(H_0\to H\), where $H_0 =\ClopUp(X'_0)$, satisfies Jibladze's law. Hence $H$ is
    $H_0$-étale, and, by \cref{prop: characterisation of etale maps}, $H$ is the Heyting reduct of an algebra in $\mathsf{Var}(H_{0})_{h\in H_0}$, where $H_{0}=\ClopUp(X_{0}')$; thus, $H$ belongs to the variety generated by a finite chain.  Moreover, $H$ is a $\KM$ Heyting algebra by \cref{prop: etale-finite yields KM}. Thus $H$ is a $\KM$-tabular Gödel algebra as desired.
    
    \eqref{eq: KM-tabular Godel}$\Rightarrow$\eqref{eq: Gödel + etale-finite} As noted above, there is $k \geq 0$ such that every principal upset in $X$ is a chain of cardinality at most $k$. We show that $X$ is étale-finite. For this purpose, note that to each point $x\in X$ we can assign its depth $d(x) \coloneqq \lvert {\uparrow}x\rvert$. Let $k^{*}=k<\dots<1$ be the $k$-element chain with the reverse of the natural order. Then the map
    \begin{align*}
        d\colon X&\longrightarrow k^{*}\\
        x&\longmapsto d(x)
    \end{align*}
    is a continuous strict p-morphism. The fact that it is a strict p-morphism follows from simple cardinality considerations, since, for each $y\in {\uparrow}x$, one has $\lvert{\uparrow}y\rvert=\lvert{\uparrow}x\rvert-\lvert [x,y)\rvert$ where $[x,y)=\{z\in X : x\leq z<y\}$. 
    
    We finally show that the map $d$ is continuous: for each $j\in \{1,\dots,k\}$, consider the set $L_{j}\coloneqq\{x\in X : d(x)=j\}$. Note that $L_{1}=\max(X)$, since maximal elements are exactly the elements such that their upset is a singleton. 
    Since $X$ is a clopen subset of itself, \cref{prop: characterisation of KM} gives that $\max(X)$ is clopen. For each $1\leq j<k$ we have $L_{j+1}=\max(X\smallsetminus \bigcup_{i = 1}^{j}L_{i})$; so, by induction, all such subsets are clopen. This shows that $d$ is continuous and concludes the proof.
\end{proof}

\makeatletter
\begingroup
\let\addcontentsline\@gobblethree
\section*{Acknowledgements} 
We would like to thank Mamuka Jibladze, Evgeny Kuznetsov, and Benno van den Berg for useful discussions on the subject of this paper, and Andrew Pitts for providing some insights into the problem.

\emph{Funding.}
The first author was funded by UK Research and Innovation (UKRI) under the UK government's Horizon Europe funding guarantee (grant number EP/Y015029/1, Project ``DCPOS'') during his affiliation at the University of Birmingham, and by an FSR Incoming Postdoctoral Fellowship during his affiliation at the Université catholique de Louvain.
The third author acknowledges support from the Basque Government through grants IT1483-22 and IT1913-26, and a postdoctoral fellowship (grant POS-2022-1-0015 and POS-2025-2-0019).
\endgroup
\makeatother

\bibliographystyle{alpha}
\bibliography{Biblio}

\section*{Author information}

\authorwithphoto
{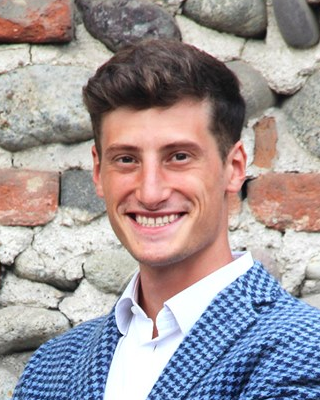}
{Marco Abbadini}
{Research Institute in Mathematics and Physics\\ Université catholique de Louvain, Louvain-la-Neuve, 1348, Belgium}
{marco.abbadini@uclouvain.be}{https://marcoabbadini-uni.github.io}

\authorwithphoto
{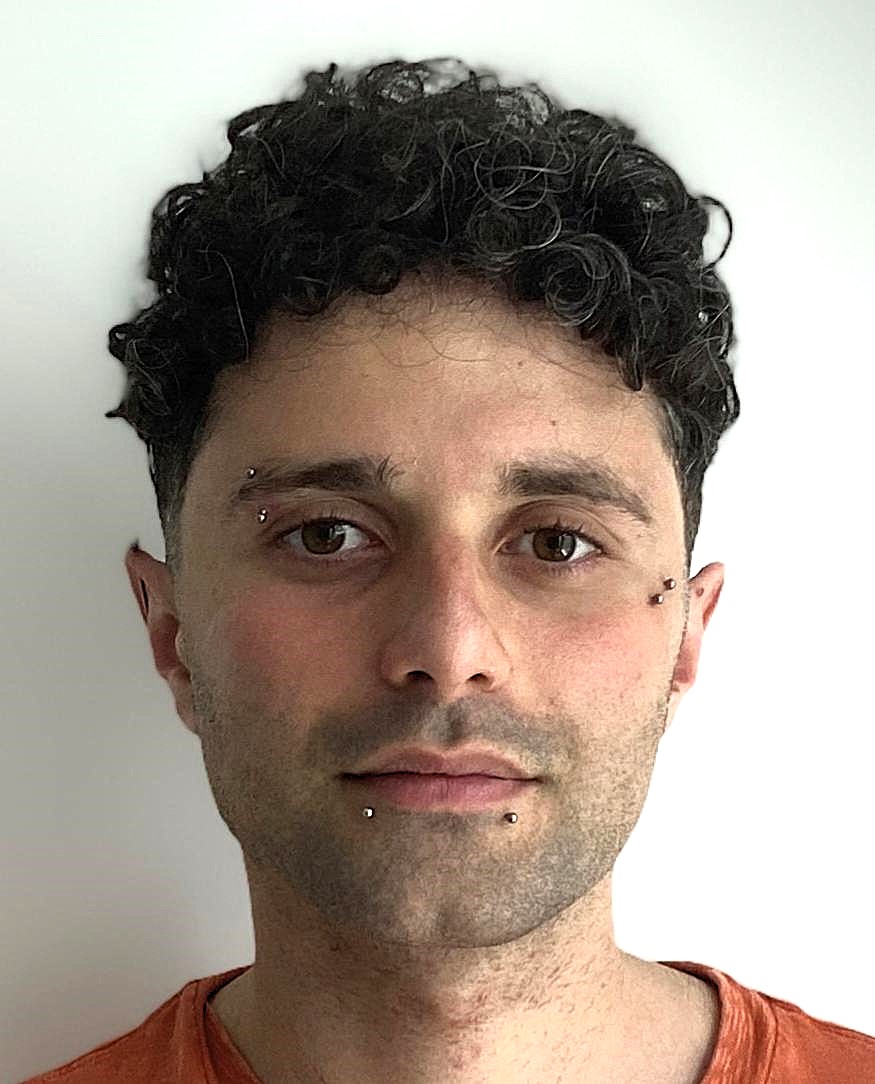}
{Rodrigo Nicolau Almeida}
{Institute for Logic, Language and Computation\\ University of Amsterdam, Amsterdam, 1098XH, The Netherlands}
{r.dacruzsilvapinadealmeida@uva.nl}{https://rodrigonalmeida.github.io}

\authorwithphoto
{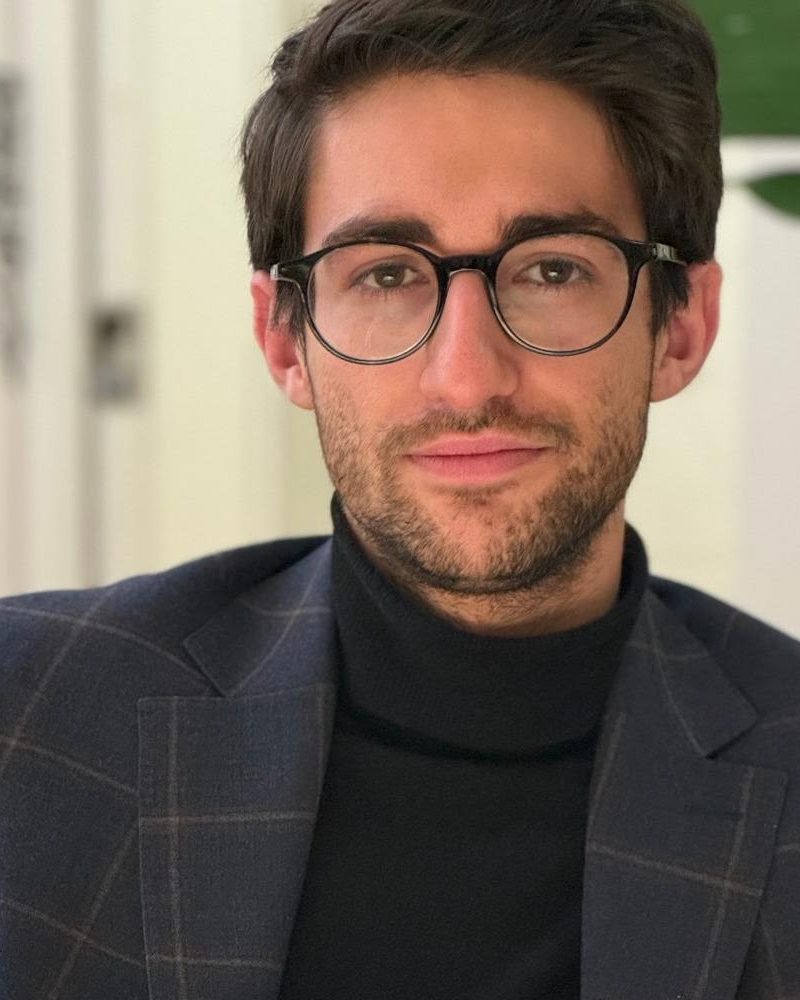}
{Igor Arrieta}
{Department of Mathematics\\ University of the Basque Country UPV/EHU, Bilbao, 48080, Spain}
{igor.arrieta@ehu.eus}
{https://sites.google.com/view/igorarrieta}

\end{document}